\definecolor{darkred}{RGB}{139,0,0}
\definecolor{darkgreen}{RGB}{0,100,0}
\definecolor{darkmagenta}{RGB}{139,0,139}
\definecolor{darkpurple}{RGB}{110,0,180}
\definecolor{darkblue}{RGB}{40,0,200}
\definecolor{darkorange}{RGB}{255,140,0}
\newtheorem{theorem}{Theorem}
\newtheorem{lemma}{Lemma}
\newtheorem{algorithm}{Algorithm}
\newtheorem{remark}{Remark}
\newtheorem{definition}{Definition}
\newtheorem{corollary}{Corollary}
\newtheorem{example}{Example}
\newtheorem{assumption}{Assumption}
\newcommand{\abs}[1]{\left\vert #1 \right\vert}
\newcommand{\bsx}{\boldsymbol{x}}
\begin{document}

\begin{frontmatter}

\title{Discrepancy Bounds for  Deterministic Acceptance-Rejection Samplers\thanksref{t1}}
\thankstext{t1}{H. Zhu was supported by a PhD scholarship from the University of New South Wales. J. Dick was supported by a Queen Elizabeth 2 Fellowship from the Australian Research Council. Helpful comments by Art Owen and Su Chen are gratefully acknowledged.}
\runtitle{Discrepancy for  Deterministic Acceptance-Rejection Samplers}


\author{\fnms{Houying} \snm{Zhu}\ead[label=e1]{houying.zhu@student.unsw.edu.au;josef.dick@unsw.edu.au}}
\and
\author{\fnms{Josef} \snm{Dick}\ead[label=e2]{josef.dick@unsw.edu.au}}
\address{School of Mathematics and Statistics\\ The University of New South Wales, Sydney, Australia \\}
\address{\printead{e1}}
\runauthor{Zhu and Dick}

\begin{abstract}
We consider  an acceptance-rejection sampler based on a  deterministic driver sequence. The deterministic sequence is chosen such that the discrepancy between the empirical target distribution and the target distribution
 is small. We use quasi-Monte Carlo (QMC) point sets for this purpose. The empirical evidence shows
convergence rates beyond the crude Monte Carlo rate of $N^{-1/2}$.
 We prove that the discrepancy of samples
  generated by the  QMC acceptance-rejection sampler is bounded from above by $N^{-1/s}$. A lower bound shows that for any given driver sequence, there always exists a target density such that the star discrepancy is at most $N^{-2/(s+1)}$.
For a general density, whose domain is the real state space $\mathbb{R}^{s-1}$, the inverse  Rosenblatt transformation can be used to convert samples
from the $(s-1)-$dimensional cube to $\mathbb{R}^{s-1}$. We show that this transformation is measure preserving.
This way, under certain conditions, we obtain the same convergence rate for a general target density defined in $\mathbb{R}^{s-1}$. Moreover,   we also consider  a deterministic reduced acceptance-rejection algorithm recently introduced by Barekat and Caflisch [F. Barekat and R.Caflisch. Simulation with Fluctuation and Singular Rates. ArXiv:1310.4555[math.NA], 2013.]
\end{abstract}

\begin{keyword}[class=MSC]
\kwd[Primary ]{62F15}
\kwd[; secondary ]{11K45}
\end{keyword}

\begin{keyword}
\kwd{Acceptance-Rejection Sampler}
\kwd{Star Discrepancy}
\kwd{$(t,m,s)$-nets}
\end{keyword}



\end{frontmatter}


\section{Introduction}

The Monte Carlo (MC) method is one of  the widely used numerical methods for simulating probability distributions. However, sometimes it is not possible to sample from a given target distribution.
Markov chain Monte Carlo (MCMC)  methods have been developed to address this problem. Instead of sampling independent points directly, MCMC samples
from a Markov chain whose limiting distribution is the target distribution.
 MCMC has widened the applications of MC in many different fields \cite{Caflisch1998,MeynTweedie1993}. Another deficiency of MC {algorithms} is its slow convergence rate.
Quasi-Monte Carlo (QMC) algorithms  on the other hand {perform} better in improving the convergence rate of Monte Carlo which partially depends on generating  samples with small discrepancy. For a survey of QMC  we refer to \cite{JFS2013,Josef}. Putting the QMC idea into MCMC is a good way to  improve the convergence rate and widen practical applications.  Recently many  results in this direction have been achieved \cite{ChenThesis2011,CDO2011,TribbleThesis2007,TribbleOwen2005}. With this paper we add another result in this direction by using a deterministic driver {sequence} in an acceptance-rejection algorithm. We prove discrepancy bounds of order $N^{-1/s}$, where the dimension of the state space is $s-1$ and $N$ is the number of samples. The discrepancy here is a generalization of the concept of the Kolmogorov-Smirnov test between the empirical distribution of the samples and  the target distribution to higher dimension.
A more detailed description of our results will be provided below.

\subsection{Previous work on MCMC and QMC}
In the following we describe some previous research on  QMC and MC. L'Ecuyer studied the convergence behavior of randomized quasi-Monte Carlo for discrete-time  Markov chains in \cite{Pierre2008}, known as array-RQMC. The general idea is to obtain a better approximation of the target distribution than with the plain   Monte Carlo method by using randomized QMC. In a different direction, Tribble~\cite{TribbleThesis2007} and Tribble and Owen~\cite{TribbleOwen2005}
  established {a} condition under  which low discrepancy sequences can be used for consistent MCMC estimation for finite state spaces.  It has been shown that replacing an IID sequence by a completely uniformly distributed sequence also implies a consistent estimation in finite state spaces.   A construction of  weakly completely uniformly distributed sequences is also proposed in \cite{TribbleOwen2005}. As a sequel to the work of Tribble, Chen in   his thesis \cite{ChenThesis2011} and Chen, Dick and Owen~\cite{CDO2011} demonstrated that Markov chain quasi-Monte Carlo (MCQMC) algorithms using a completely uniformly distributed sequence as
  driver sequence gives a consistent result under certain assumptions on the update function and Markov chain. Further, Chen~\cite{ChenThesis2011} also showed that MCQMC can achieve
    a  convergence rate of $O(N^{-1+\delta})$ for any $\delta>0$ under certain conditions,  but he only showed the existence of a driver sequence.

In our recent work \cite{DRZ2013}, done with  Rudolf,  we prove upper bounds on the discrepancy under the assumptions that the Markov chain is uniformly ergodic
and the driver sequence is deterministic rather than independent uniformly distributed random variables. In particular, we show the existence of driver sequences for which the discrepancy of the Markov chain from the target distribution with respect
to certain test sets converges with (almost) the usual Monte Carlo rate of $N^{-1/2}$. A drawback of this result is that we
 are currently  not able to give an explicit construction of a driver sequence for  which our discrepancy bounds hold for uniformly ergodic Markov chains. Garber and Choppin in \cite{GC2014} adapted  low discrepancy point sets instead of random numbers in sequential Monte Carlo (SMC). They proposed  a new algorithm named sequential quasi-Monte Carlo (SQMC).
  They constructed consistency and stochastic bounds based on randomized QMC point set for this algorithm. It is an open problem to obtain  deterministic bounds for SQMC.
More literature review about applying QMC to MCMC problems can be found in \cite[Section 1]{CDO2011}.

{\subsection{Acceptance-rejection algorithms}}
We now give a description of the algorithms in this paper.
Let $\psi: D\to{\mathbb{R}}_{+}=[0,\infty)$ be our target density function, where $D\subseteq \mathbb{R}^{s-1}$ and $\mathbb{R}_{+}=[0,\infty)$. We consider the cases where $D=[0,1]^{s-1}$ or $D=\mathbb{R}^{s-1}$.
Assume that it is not possible to sample directly from the target distribution.
One possible solution to obtain samples from $\psi$  is to choose a proposal density $H$ from which we can sample and then use an acceptance-rejection algorithm. Assume there exists a constant $L<\infty$ such that $\psi(\boldsymbol{x}) < L H(\boldsymbol{x})$ for all $\boldsymbol{x}\in D$. The  following algorithm can be used to obtain samples with distribution $\psi$.

\begin{algorithm}\label{Basic_AR}(Random acceptance-rejection (RAR) algorithm). Given a target density $\psi:D\to{\mathbb{R}}_{+}$  and a proposal density $H: D\to \mathbb{R}_+$. Assume that there exists a constant $L<\infty$ such that  $\psi(\boldsymbol{x}) < LH(\boldsymbol{x})$ for all $\boldsymbol{x}$ in the domain $D$.
We introduce another random variable $u$ having uniform distribution in the unit interval, i.e. $u\thicksim U([0,1])$. Then the acceptance-rejection algorithm is given by
\begin{enumerate}
  \item Draw $X\thicksim H$ and $u\thicksim U([0,1])$.
  \item Accept $Y=X$ as a sample of $\psi$ if $u\leq \frac{\psi(\boldsymbol{X})}{LH(\boldsymbol{X})}$,
otherwise go back to step 1.
\end{enumerate}
\end{algorithm}
See also \cite{BHL13,Devroye,HLD04} for a discussion of related algorithms. For a discussion on how to select proposal densities see for instance \cite{BHL13} and the references therein.
 The  acceptance-rejection sampler works to sample from an unknown density  based on a proposal density.

Acceptance-rejection sampling and importance sampling \cite[Section~3]{RobertCasella2004} are quite similar ideas. Both of them distort a sample from a  distribution in order to sample from another one.
However, there is a difference in the  selection of the constant $L>\psi(\boldsymbol{x})/H(\boldsymbol{x})$ for  $\boldsymbol{x}$ in the domain $D$.
The acceptance-rejection method does not work when $\sup_{\boldsymbol{x}\in D} \psi(\boldsymbol{x})/H(\boldsymbol{x})=\infty$, while importance
sampling is still available  \cite{OwenbookDraft}. In this paper, we only use the acceptance-rejection sampler to get samples of a given target density,
since we are interested in obtaining discrepancy bounds for MCQMC. More information on general strategies for generating nonuniform random variables can be found in the monographs \cite{Devroye, HLD04}.

 \begin{figure}\label{SobolVSPseudorandom}
 \includegraphics[trim=1.5cm 1cm 1.5cm 1cm, clip=true, totalheight=0.42\textheight, angle=360]{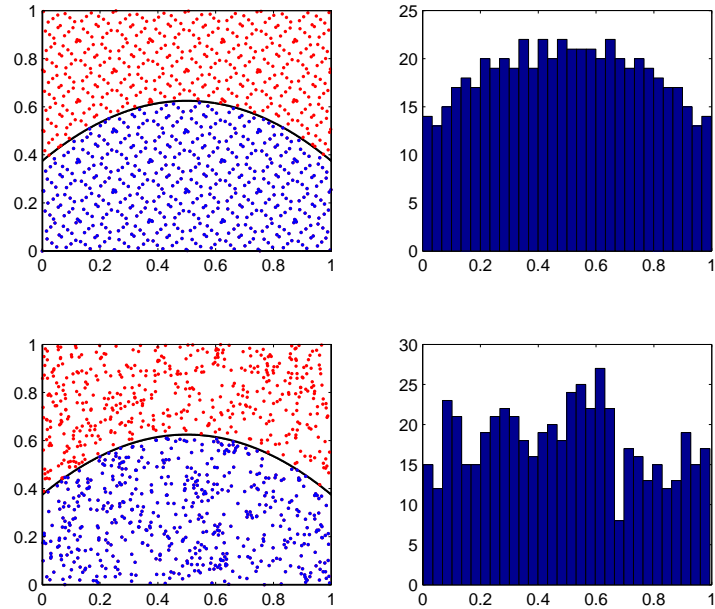}\\
  \caption{Different driver sequences: deterministic  and pseudo-random points (the total number of points in each case is $2^9$) and histograms are w.r.t. samples we accepted.}
\end{figure}

The acceptance-rejection algorithm with deterministic driver sequence is one special class of MCQMC.
From the superior distribution properties in terms of the discrepancy of the Sobol sequence  \cite{Sobol1967} one could expect an improvement in the discrepancy  of the samples obtained from the acceptance-rejection algorithm based on the Sobol sequence.  In one dimension the discrepancy we study is the Kolmogorov-Smirnov test   between the target distribution and the empirical distribution of the sample points.  For a given point set in the $s$-dimensional unit cube, the star discrepancy
  measures the difference between the  proportion of points in a subinterval of $[0,1]^s$ and the Lebesgue  measure of this subinterval. We defer the precise definition
  of discrepancy to Section~\ref{SecBackground}.

In this paper, we replace the  IID initial samples with an explicit construction of the driver sequence by using $(t,m,s)$-nets \cite{Josef,Niederriter1992} (obtained from the Sobol sequence).
Figure \ref{SobolVSPseudorandom} shows a  comparison between different driver sequences: deterministic points (Sobol points) and pseudo random uniform points (they both have $2^9$ points). The acceptance-rejection sampler works by only accepting  those points under the target density curve.  The difference of driver sequences  will affect the samples we obtain by the acceptance-rejection algorithm, hence the distribution properties of the points which were  accepted will be influenced. The right two figures in Fig.\ref{SobolVSPseudorandom} show the histograms of the points which we accepted in both cases. Note that the deterministic  samples  better estimate the density function.
 Our interest in this paper is in entirely deterministic methods. However, one could also use randomized quasi-Monte Carlo point sets \cite{Owen1, Owen2, Owen3} and  study a randomized setting.

\subsection{Previous work on deterministic acceptance-rejection algorithm }
 The deterministic acceptance-rejection  algorithm has also been discussed by Moskowitz and Caflisch~\cite{MC96} and Wang~\cite{Wang1999, Wang00}. Therein a smoothing technique was introduced to improve the numerical performance of the acceptance-rejection algorithm. Wang~\cite{Wang00} gave  a heuristic argument to indicate a convergence rate of order $N^{-\frac{s+2}{2(s+1)}}$. This argument assumes that the points in elementary intervals are uniformly distributed. Thus this reasoning is not fully deterministic. { Our lower bound on discrepancy (Theorem~\ref{lowerPsiN}) indicates that this reasoning does not apply  in our case.}
 The numerical experiments in \cite{Wang00} also indicate an improvement using a well chosen deterministic driver sequence
 (in this case the so-called Halton sequence \cite{Halton}) compared to a random driver sequence. Recently, Nguyen and \"{O}kten in \cite{NO2013} presented a consistency result of an acceptance-rejection algorithm for low-discrepancy sequences. This algorithm yielded good numerical performances on standard deviation and efficiency. However, proving an explicit convergence rate of the discrepancy for this algorithm is still  an open problem. See also \cite{MC95,NO2013} for numerical experiments
 using quasi-Monte Carlo point sets for the related problem of integrating indicator functions.

  It is worth noticing
 that all results given in previous work are empirical evidence and the discrepancy of samples is not directly investigated.
 Our work focuses on discrepancy properties of points produced by totally deterministic acceptance-rejection methods.  We also
 prove discrepancy bounds on deterministic acceptance-rejection algorithms, including an upper bound and a lower bound.  {The combination with the reduced acceptance-rejection sampler provides further evidence of the good
 performance of the deterministic method.} Our algorithm here may also be combined with similar algorithms like the acceptance-complement method, see for instance \cite[Section~II.5]{Devroye}.

Before  presenting the theoretical background, we briefly describe deterministic algorithms and some numerical results which show a convergence rate comparison  using Monte Carlo and quasi-Monte Carlo methods.
\section{Our results}
\subsection{Construction of driver sequence}\label{SubSecConstruction}
In this paper, we use low discrepancy point  sets given by $(t,s)$-sequences (see Definition~\ref{DeftmsNet} and Definition~\ref{DeftsSequence} below) in base $b$ as driver sequences. The first $b^m$ points of a $(t,s)$-sequence are a so called $(t,m,s)$-net in base $b$.
Explicit constructions of $(t,s)$-sequences in base $2$ have been found by Sobol~\cite{Sobol1967}, in prime base $b\ge s$ by Faure~\cite{Faure1982} and in prime-power base $b$ by Niederreiter~\cite{Niederriter1988}.  In all these  constructions $t$ depends only on $s$ but not on $m$. In practice, since digital nets (based on Sobol points) are included in the statistics toolbox of Matlab, this method is very easy to implement. People seeking  more discussion of construction methods can  also consult \cite[Chapters 4\&8]{Josef}.

In the following we describe the algorithm  and present some numerical results.
Since in general it is computationally too expensive to compute the supremum in the definition of the star-discrepancy exactly,  we use a so-called $\delta$-cover to estimate this supremum. An introduction to $\delta$-covers is provided in the appendix.
In the numerical discussion, the driver sequence is  generated by a $(t,m,s)$-net in base $2$. Specifically, we always use a Sobol sequence \cite{Sobol1967} to generate $(t,m,s)$-nets for our experiments.
\subsection{Deterministic algorithm for target densities defined on $[0,1]^s$}\label{SubsecCube}
 We consider now the case where the target density is defined on $[0,1]^{s-1}$. The following algorithm is a deterministic version of Algorithm~\ref{Basic_AR}.  For the proofs later, we need the technical assumption that the target density is pseudo-convex. The definition of pseudo-convexity is discussed in Section~\ref{SecBackground}.
\begin{algorithm}\label{algorithmAR}(Deterministic acceptance-rejection (DAR) algorithm in $[0,1]^s$).  Let the target density $\psi:[0,1]^{s-1}\to \mathbb{R}_+$,  where $s\geq 2$, be pseudo-convex. Assume that there exists a constant $L<\infty$ such that $\psi(\boldsymbol{x}) \leq L$ for  all $\boldsymbol{x}\in[0,1]^{s-1}$. Let $A=\{\boldsymbol{x}\in[0,1]^s:\psi(x_1,\ldots,x_{s-1})\geq L x_s\}$. Suppose we aim to obtain approximately $N$ samples from $\psi$.
\begin{itemize}
  \item [i)~] Let $M=b^m\ge\left \lceil N/(\int_{[0,1]^{s-1}}\psi(\boldsymbol{x})/L d\boldsymbol{x})\right\rceil$, where $m\in\mathbb{N}$ is the smallest integer satisfying this inequality.  Generate a $(t,m,s)$-net $Q_{m,s}=\{\boldsymbol{x}_0,\boldsymbol{x}_1,\ldots,\boldsymbol{x}_{b^m-1}\}$ in base $b$.
  \item [ii)~]Use the acceptance-rejection method for the points $Q_{m,s}$ with respect to the density $\psi$, i.e. we accept the point $\boldsymbol{x}_n$ if $\boldsymbol{x}_n \in A$, otherwise reject. Let $P_N^{(s)}=A\cap Q_{m,s}=\{\boldsymbol{z}_0,\ldots, \boldsymbol{z}_{N-1}\}$ be the sample set we accept.
  \item [iii)~]  Project the points  $P_N^{(s)}$ onto  the first $(s-1)$ coordinates.
  Let $P_N^{(s-1)}=\{\boldsymbol{y}_0,\ldots, \boldsymbol{y}_{N-1}\}\subseteq [0,1]^{s-1}$ be the projections of the points  $P_N^{(s)}$.
 \item [iv)~] Return the point set $P_N^{(s-1)}$.
\end{itemize}
\end{algorithm}

The following example shows a better convergence rate when using a low-discrepancy  driver sequence  rather than random point set. In each example the reported discrepancy for  the AR algorithm using a random diver sequence is the average of $10$ independent runs, which is throughout all the numerical experiments.

\begin{example}\label{Example24Dim}
In this example we consider a non-product target density in $[0,1]^4$.
Let target density  $\psi$ be
\begin{equation*}
    \psi(x_1,x_2,x_3,x_4)=\frac{1}{4}(e^{-x_1}+e^{-x_2}+e^{-x_3}+e^{-x_4}), ~~ (x_1,x_2,x_3,x_4) \in[0,1]^4.
\end{equation*}
\end{example}
Figure~\ref{fig:Example24Dim} shows the discrepancy by using deterministic points and pseudo-random points as driver sequence.
For the RAR algorithm, we observe  a convergence rate of $N^{-0.482}$, whereas the DAR algorithm shows a convergence rate of the discrepancy of order $N^{-0.659}$.

 \begin{figure}[!htp]\begin{center}
  \includegraphics[trim=0.5cm 0.25cm 0.5cm 0.5cm, clip=true, totalheight=0.4\textheight, angle=360]{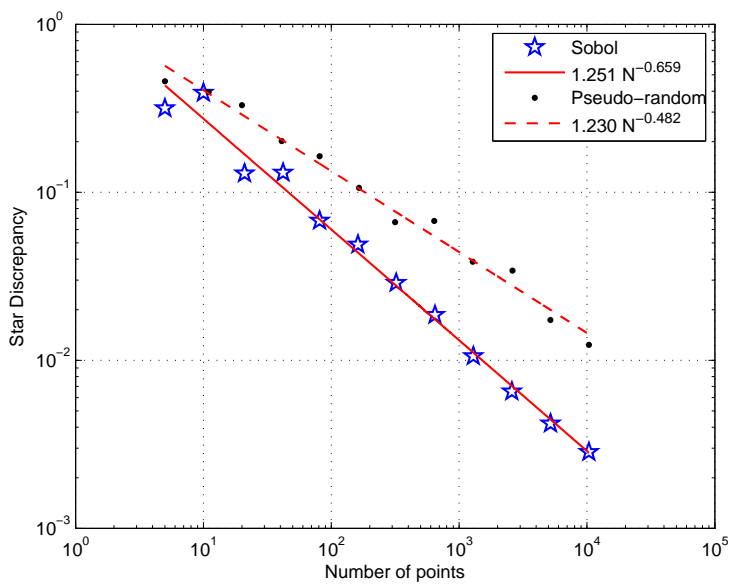}\\
  \caption{Convergence order of the star discrepancy of Example~\ref{Example24Dim}.}\label{fig:Example24Dim} \end{center}
\end{figure}
\subsection{Deterministic algorithm for target density defined in real state space}\label{SubsecR}
Now we extend the domain of the target density $\psi$ to  $\mathbb{R}^{s-1}$ with $s\ge 2$.
 Assume that there is  a proposal density function ${H}: \mathbb{R}^{s-1}\to \mathbb{R}_+$ such  that there exists a  constant ${L}<\infty$ such that $\psi(\boldsymbol{z})\le {L} {H}(\boldsymbol{z})$ holds for all $\boldsymbol{z}\in \mathbb{R}^{s-1}$.

 The inverse  Rosenblatt transformation  is used to generate samples from the proposal density
in the real state space $\mathbb{R}^{s-1}$.
 Let ${F}$ be the joint CDF of ${H}$ and ${F}_j(z_j|z_1,\ldots,z_{j-1})$ be the conditional CDF of the proposal density for $j = 1,\ldots,s-1$.
The  transformation ${T}$ is used to
generate points in $\mathbb{R}^{s-1}\times \mathbb{R}_+$ from the unit cube $[0,1]^{s}$, such that the projection of points onto the first $s-1$ coordinates has distribution $H$. More precisely,
let $T: [0,1]^{s}\to \mathbb{R}^s$ be the transformation  given by
\begin{equation}\label{InverseRosenblattTrans}\begin{array}{ll}
&\boldsymbol{z}=T(\boldsymbol{u})\\[0.25cm]
&=\left\{\begin{array}{lll}
  &z_1=F_1^{-1}(u_1), \\[0.25cm]
  &z_j=F_j^{-1}(u_j|u_1,\ldots,u_{j-1}), ~2\leq j\leq s-1,\\[0.25cm]
  &z_{s}= u_{s}H(z_1,\ldots,z_{s-1}).\\[0.25cm]
  \end{array}\right.\end{array}
\end{equation}
The first $s-1$ coordinates  are produced by  the inverse  Rosenblatt transformation which converts the points from the unit cube $[0,1]^{s-1}$ into $\mathbb{R}^{s-1}$.  The $s$th coordinate is uniformly distributed on the line
 $$\{(1-v)(z_{1},\ldots z_{s-1},0)+v(z_{1},\ldots z_{(s-1)},H(z_1,\ldots,z_{s-1}), 0\le v\le1\}$$ if
 $u_s$ is uniformly distributed in $[0,1]$. More details with respect to the Rosenblatt transformation and extensions can be found in \cite{Chentsov1967,Nohu2007,Rosenblatt1952}.

\begin{algorithm}\label{algorithmRd} (Deterministic acceptance-rejection algorithm in $\mathbb{R}^s$). Let an unnormalized target density function $\psi:\mathbb{R}^{s-1}\to \mathbb{R}_+$, where $s\ge 2$, be given. Let ${H}$ be a  proposal density $H:\mathbb{R}^{s-1}\to \mathbb{R}_+$, such that there exists a constant $L<\infty$ such that
$\psi(\boldsymbol{z})\leq L H(\boldsymbol{z})$ for all $\boldsymbol{z}\in \mathbb{R}^{s-1}$.
Let $A=\{\boldsymbol{z}\in\mathbb{R}^s:\psi(z_{1},\ldots,z_{s-1})\ge L H(z_{1},\ldots,z_{s-1}) z_s\}$. {Suppose we aim to obtain approximately $N$ samples from $\psi$.}
\begin{itemize}
                   \item [i)~]  Let $M= b^m\ge\left\lceil N\int_{[0,1]^{s-1}}H(\boldsymbol{x})d\boldsymbol{x}/(\int_{[0,1]^{s-1}}\psi(\boldsymbol{x})/L d\boldsymbol{x})\right\rceil$, where $m\in \mathbb{N}$ is the smallest integer satisfying this inequality. Generate  a $(t,m,s)$-net $Q_{m,s}=\{\boldsymbol{x}_{0},\boldsymbol{x}_{1},\ldots,\boldsymbol{x}_{M-1}\}$ in base $b$.
                    \item [ii)~]  Transform the  points into $\mathbb{R}^{s-1}\times \mathbb{R}_+$ from $[0,1]^s$ using the  transformation $T$ given in \eqref{InverseRosenblattTrans} to obtain $\{T(\boldsymbol{x}_0),\ldots,T(\boldsymbol{x}_{M-1})\}$.
                    \item [iii)~] Take the acceptance-rejection method for the sample $T(\boldsymbol{x}_{n})$ with respect to $H$ and $\psi$ in $\mathbb{R}^{s-1}\times\mathbb{R}_+$, i.e. accept the point $T(\boldsymbol{x}_{n})$ if $T(\boldsymbol{x}_{n})\in A$, otherwise reject.
                    Let $P_N^{(s)}=A\cap T(Q_{m,s})=\{\boldsymbol{z}_0,\ldots, \boldsymbol{z}_{N-1} \}$.
                    \item [iv)~]  Project the points $P_N^{(s)}$ we accepted onto  the first $(s-1)$-dimensional space. Denote the first $s-1$ coordinates of the points we accept by the acceptance-rejection method by $P_N^{(s-1)}=\{\boldsymbol{y}_0,\ldots, \boldsymbol{y}_{N-1}\}\subseteq \mathbb{R}^{s-1}$.
                     \item [v)~] Return the point set $P_N^{(s-1)}$.
\end{itemize}
\end{algorithm}
 We provide an example to demonstrate the performance of Algorithm~\ref{algorithmRd}.
\begin{example}\label{Example3} Let the target density function  be given by
\begin{equation*}
  \psi(x_1,x_2)=\left\{\begin{array}{ll}
\displaystyle\frac{4}{\pi}e^{-(x_1+x_2)}(x_1x_2)^{1/2},& \quad x_1,x_2>0,\\
    0,& \quad \mbox{ else.}
    \end{array}\right.
\end{equation*}
\end{example}
The proposal density function  $H$,  which we  use to do the  acceptance-rejection to generate samples of $\psi(x_1,x_2)$,  is chosen as
 \begin{equation*}
   H(x_1,x_2)=\left\{ \begin{array}{ll}
  \displaystyle\frac{1}{4}, &0\leq x_1,x_2\leq 1,\\[0.2cm]
  \displaystyle\frac{1}{4x_2^2}, &0\leq x_1\leq 1,x_2 >1,\\[0.2cm]
    \displaystyle\frac{1}{4x_1^2}, &x_1 >1,0\leq x_2\leq 1,\\ [0.2cm]
  \displaystyle\frac{1}{4x_1^2x_2^2}, &x_1,x_2 >1,\\[0.2cm]
  0, &\quad \mbox{else.}\\
      \end{array}\right.
 \end{equation*}
For this choice of $H$, we use transform $T$ defined in Equation~\eqref{InverseRosenblattTrans} to obtain samples from $H$.
The sample $(x_{j,1},x_{j,2})$ is given by the following transformation
\begin{equation*}\begin{array}{ll}
x_{j,1}=&\left\{\begin{array}{lll}
  2u_{j,1}, &0\le u_{j,1}\le 1/2,\\
   1/2(1-u_{j,1}),& 1/2<u_{j,1}\le 1,\\
    \end{array}\right.\end{array}
\end{equation*}
\begin{equation*}\begin{array}{ll}
x_{j,2}=&\left\{\begin{array}{lll}
  2u_{j,2}, &0\le u_{j,2}\le 1/2,\\
   1/2(1-u_{j,2}),& 1/2<u_{j,2}\le 1.\\
    \end{array}\right.\end{array}
\end{equation*}
 Note that $(u_{j,1},u_{j,2})$ is the driver sequence given by a $(t,m,2)$-net in base $b$.

The order of the star discrepancy is demonstrated in  Figure~\ref{OrderGamma3-2} where $N$ is the number of {accepted} samples. The numerical experiments  show that the star discrepancy converges  at a rate of $N^{-0.720}$ for this example using quasi-Monte Carlo samples as proposal.  The RAR algorithm converges with order $N^{-0.390}$. Again, the DAR sampler outperforms the RAR sampler.

\begin{figure}[!htp]\begin{center}
 \includegraphics[trim=0.5cm 0.25cm 0.5cm 0.5cm, clip=true, totalheight=0.4\textheight, angle=360]{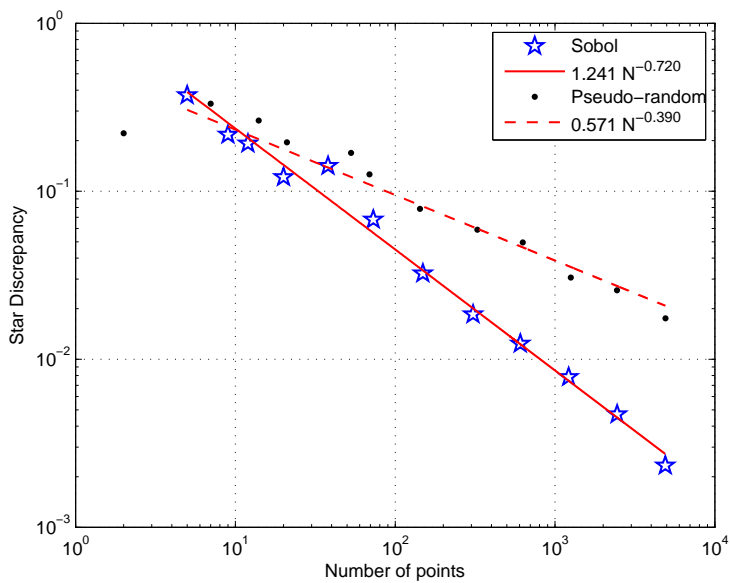}\\
 \caption{ Convergence order of the star discrepancy of Example~\ref{Example3}.}\label{OrderGamma3-2}\end{center}
\end{figure}

\subsection{A deterministic reduced acceptance-rejection sampler}\label{SubsecSum}
In this subsection we consider  an extension of the DAR sampler.  The random version of this reduced method was recently introduced by Barekat and Caflisch in \cite{BC2013}.  For a target density function $\psi$, we carefully select $H$ such that for $\psi-H$ and $H$ the inverse CDF can be computed. For the case $\psi(\boldsymbol{x})> H(\boldsymbol{x})$, we write $\psi=(\psi-H)+H$ and get samples according to $\psi-H$ and $H$  respectively.

 Figure~\ref{FishProblem} illustrates this method. The sample sets of $\psi$ can be divided into three subsets, $R_{1,1}, R_{1,2}$ and $R_{2,2}$, where $R_{1,2}$ and $R_{2,2}$ can be directly generated by using the inverse CDF of $H$ and $\psi-H$ in a certain range. The acceptance-rejection method is only used to obtain $R_{1,1}$.  Compared with the ordinary acceptance-rejection sampler, one obvious merit of this method is that we do not require $\psi(\boldsymbol{x})\le H(\boldsymbol{x})$ in the whole domain. Also, this method might give better convergence rates {since $R_{1,2}$ and $R_{2,2}$ are obtained via inversion and therefore have low discrepancy}.  Algorithm~\ref{Alg-RAR} gives a simple version of the improved method.  More discussion of a general version is available in Section~\ref{SecImprovedAlg}.

 \begin{figure}[!htp]\begin{center}
 \includegraphics[trim=0.5cm 0.25cm 0.5cm 0.5cm, clip=true, totalheight=0.4\textheight, angle=360]{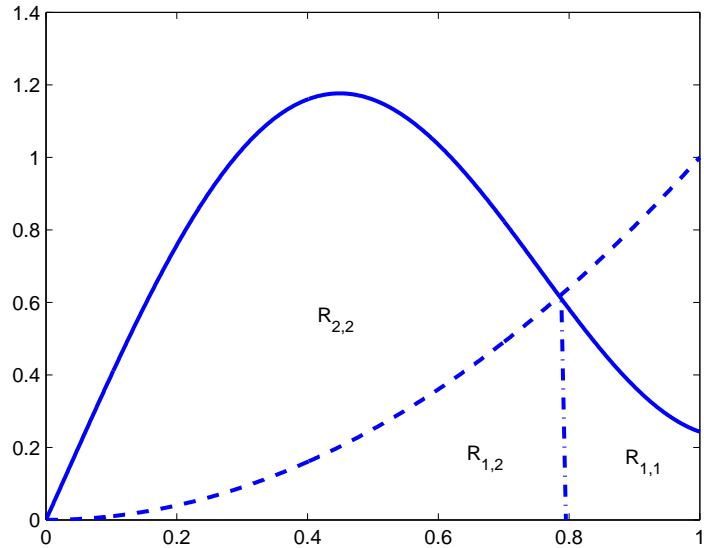}\\
 \caption{The solid line represents the target density $\psi$ and the dashed line is the proposal density $H$. }\label{FishProblem}\end{center}
\end{figure}

\begin{algorithm}\label{Alg-RAR}{(Deterministic reduced acceptance-rejection (DRAR) algorithm)} Let $\psi:[0,1]\to \mathbb{R}_+$ be a target density.  Choose a proposal density $H$ such that $\psi-H$ and $H$  can be sampled directly.
Let\begin{align*}
    &\mathcal{S}:=\{x\in [0,1]: \psi(x)< H(x)\}\nonumber \\
    \mbox{ and }&\\
    &\mathcal{L}:=\{x\in [0,1]: \psi(x)\ge H(x)\}.\nonumber \\
\end{align*} Assume that $\int_{\mathcal{S}}\psi(x)dx/\int_{[0,1]}\psi(x)dx,\int_{\mathcal{L}}H(x)dx/\int_{[0,1]}\psi(x)dx$ and $\int_{\mathcal{L}}(\psi-H)(x)dx/\int_{[0,1]}\psi(x)dx$ can be calculated or estimated.
Let $F_{H,\mathcal{S}}^{-1},F_{H,\mathcal{L}}^{-1}$ be the inverse  CDF of the proposal density $H$ in the domain $\mathcal{S}$ and $\mathcal{L}$ respectively and $F_{\psi-H,\mathcal{L}}^{-1}$ be the inverse CDF with respect to $\psi-H$ in $\mathcal{L}$.
Suppose we aim to generate approximately $N$ samples from $\psi$. Let
$$N_1=\left\lceil N\frac{ \int_{\mathcal{S}}\psi(x)dx}{\int_{[0,1]}\psi(x)dx}\right\rceil,  N_2=\left\lceil N\frac{\int_{\mathcal{L}}H(x)dx}{\int_{[0,1]}\psi(x)dx}\right\rceil
\mbox{ and } N_3=\left\lceil N\frac{\int_{\mathcal{L}}(\psi-H)(x)dx}{\int_{[0,1]}\psi(x)dx}\right\rceil.$$
{\begin{itemize}
 \item [i)~] Let $\{\boldsymbol{x}_0,\boldsymbol{x}_1, \boldsymbol{x}_2,\ldots\}\subset [0,1]^2$ be a $(t,2)$-sequence in base $b$.

 \item [ii)~]   Use the acceptance-rejection method with the target density $\psi$ and the proposal density $H$ on the domain $\mathcal{S}$ using  $\{\boldsymbol{x}_0,\boldsymbol{x}_1, \ldots,\boldsymbol{x}_{M-1}\}$ as driver sequence. Choose $M$ such that $N_1$ points are accepted by the DAR algorithm. Compute $F^{-1}_{H,\mathcal{S}}(\boldsymbol{x}_n)$ for $n=0,1,2,\ldots, M-1$. Let $\boldsymbol{z}_0,\boldsymbol{z}_1, \ldots,\boldsymbol{z}_{N_1-1}$ be the accepted points. Label the point set as $R_{1,1}$.
 \item [iii)~]  Compute the points $F_{H,\mathcal{L}}^{-1}(\boldsymbol{x}_n)$ for $n=0,1,\ldots, N_2-1$. Let $R_{1,2}=\{F_{H,\mathcal{L}}^{-1}(\boldsymbol{x}_n):0\le n< N_2\}$.
  \item [iv)~]  Compute the points $F_{\psi-H,\mathcal{L}}^{-1}(\boldsymbol{x}_n)$ for $n=0,1,\ldots, N_3-1$. Let $R_{2,2}=\{F_{\psi-H,\mathcal{L}}^{-1}(\boldsymbol{x}_n):0\le n< N_3\}$.
 \item [v)~]  Project the points in $R_N=R_{1,1}\cup R_{1,2}\cup R_{2,2}$ onto  the first  coordinate. Return the point set $R_N^{(1)}$.
    \end{itemize}}
\end{algorithm}
 Since the inverse transform is a measure-preserving transformation, it can preserve the uniformities of the driver sequence. Thus $R_{1,2}$ and $R_{2,2}$ are low discrepancy point sets. The following example verifies the efficiency  of the DRAR algorithm. A theoretical result about the discrepancy properties of samples obtained by this class of algorithms is provided in  Theorem~\ref{triangleDisc}.

\begin{example}\label{ExampleSumCase} Let $\psi(x)=\sin(4x)+x^2$  be a density function defined on $[0,1]$. Instead of seeking  a proposal density $H$ such that  $\psi(x)\le H(x)$, we notice that inversion  can be implied to $\sin(4x)$ and $x^2$ independently. However, it can not work for their  sum. Choose $H(x)=x^2$. We only do deterministic acceptance-rejection with respect to the target density $\psi$ and proposal density $H$ in the subinterval $\mathcal{S}=(\pi/4,1]$.  In the remaining range $\mathcal{L}=[0,\pi/4]$,  we apply the inverse transformation on $H$ and $\psi-H$ to obtain samples based on a deterministic driver sequence.\end{example}
The discrepancy of the point set generated by Algorithm~\ref{Alg-RAR}  converges at the rate of  $N^{-0.929}$, which is significantly better than the $N^{-0.501}$ convergence rate of a random driver sequence, see Figure~\ref{figDiscrepSumcase}.

\begin{figure}
\includegraphics[trim=0.5cm 0.25cm 0.5cm 0.5cm, clip=true, totalheight=0.4\textheight, angle=360]{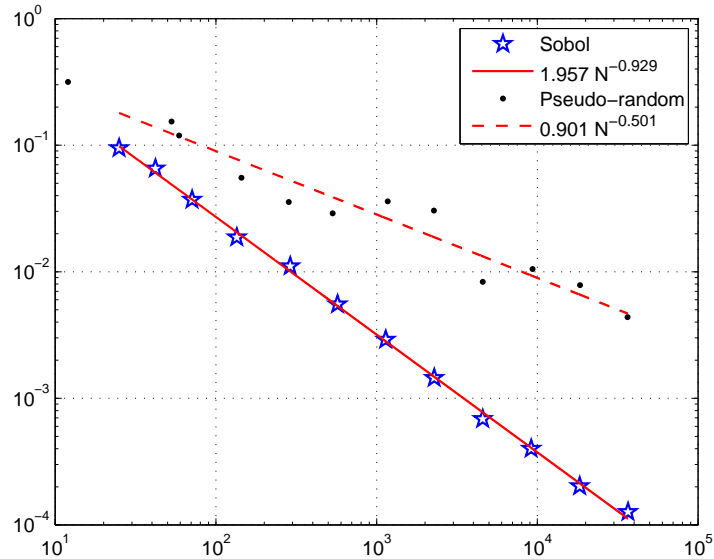}\\
   \caption{Convergence order of the star discrepancy of Example~\ref{ExampleSumCase}.}\label{figDiscrepSumcase}
\end{figure}

\section{Background on  discrepancy theory and $(t,m,s)$-nets}\label{SecBackground}
In this section we  first establish some notation and  some useful definitions and then obtain theoretical results.
First we introduce the definition of $(t,m,s)$-nets in base $b$ (see  \cite{Josef}) which we use as the driver sequence throughout the paper. The following fundamental definitions of elementary interval and fair sets are used to define a $(t,m,s)$-net and $(t,s)$-sequence in base $b$.
\begin{definition} ({b-adic elementary interval}). Let $b\geq 2$ be an integer. An $s$-dimensional $b$-adic elementary interval is an interval of the form
\begin{equation*}
    \prod_{i=1}^s\left[\displaystyle\frac{a_i}{b^{d_i}},\displaystyle\frac{a_i+1}{b^{d_i}}\right)
\end{equation*}
with integers $0\leq a_i<b^{d_i}$ and $d_i\geq 0$ for all $1\leq i\leq s$. If $d_1,\ldots,d_s$ are such that $d_1+\cdots+d_s=k$, then we say that the elementary interval is of order $k$.
\end{definition}

 \begin{definition}({fair sets}).~For a given set $P_N=\{\boldsymbol{x}_0,\boldsymbol{x}_1,\ldots,\boldsymbol{x}_{N-1}\}$ consisting of $N$ points in $[0,1)^s$, we say for a subset $J$ of $[0,1)^s$ to be fair with respect to $P_N$, if
\begin{equation*}
   \displaystyle\frac{1}{N}\sum_{n=0}^{N-1}1_J(\boldsymbol{x}_n)=\lambda(J),
\end{equation*}
where $1_J(\boldsymbol{x}_n)$ is the indicator function of the set $J$ and $\lambda$ is the Lebesgue measure. \end{definition}

\begin{definition}\label{DeftmsNet} ({$(t,m,s)$-nets in base b}).~For a given dimension $s\geq 1$, an integer base $b\geq 2$, a positive integer $m$ and an integer $t$ with $0\leq t\leq m$,
a point set $Q_{m,s}$ of $b^m$ points in $[0,1)^s$ is called a  $(t,m,s)$-nets in base $b$ if the point set $Q_{m,s}$ is fair with respect to all b-adic s-dimensional elementary intervals of order at most $m-t$.\end{definition}

\begin{definition}\label{DeftsSequence}($(t,s)$-sequence).~For a given dimension $s\geq 1$, an integer base $b\geq 2$ and a positive integer $t$, a sequence $\{\boldsymbol{x}_0, \boldsymbol{x}_1,\ldots\}$ of points in $[0,1)^s$ is called a  $(t,s)$-sequence in base $b$ if for all integers $m\ge t$ and $k\ge 0$, the point set consisting of the points $\boldsymbol{x}_{kb^m},\ldots,\boldsymbol{x}_{kb^m+b^m-1}$ forms a $(t,m,s)$-net in base $b$.
\end{definition}
The concept of discrepancy is introduced in \cite{Niederriter1974} to measure the deviation of
 a sequence from the uniform distribution.  Now we give the definition of the so-called
 star discrepancy which enables us to distinguish  the quality of point sets with respect to the uniform distribution.
\begin{definition}\label{stardiscrepancy} ({star discrepancy}).~Let  $P_N=\{\boldsymbol{x}_0, \boldsymbol{x}_1,\ldots,\boldsymbol{x}_{N-1}\}$ be a point set in  $[0,1)^s$. The star discrepancy $D_N^*$ is defined by
\begin{equation*}
    D_N^*(P_N)=\sup_{J\subset [0,1)^s}\left| \displaystyle\frac{1}{N}\sum_{n=0}^{N-1}1_J(\boldsymbol{x}_n)-\lambda(J)\right|,
\end{equation*}
where the supremum is taken over all  $J=\prod_{i=1}^s [0,\beta_i)\subseteq [0,1)^s$.\end{definition}
See Figure~\ref{Fig:StarDiscrepancy} for an illustration of the concept of discrepancy in the unit square.
\begin{figure}[!htp]\begin{center}
 \includegraphics[trim=1cm 0.25cm 0.5cm 0.5cm, clip=true, totalheight=0.4\textheight, angle=360]{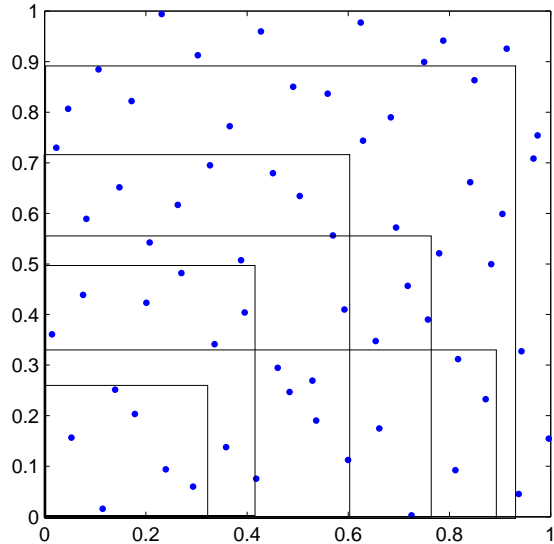}\\
 \caption{The discrepancy measures the difference between the proportion of points in each rectangle $J$ and the Lebesgue measure of $J$. The star discrepancy
  is defined by the supremum of discrepancies over all rectangles $J$. If we change $J$ to be  convex sets, we obtain the so-called isotropic discrepancy.}\label{Fig:StarDiscrepancy}
 \end{center}
\end{figure}

If we extend the supremum in Definition \ref{stardiscrepancy} over all convex sets in $[0,1]^s$, we get another interesting discrepancy, the so-called isotropic discrepancy.
 It is  another measure of the distribution properties of point sets with respect to convex sets.
\begin{definition}  ({isotropic discrepancy}). \label{isotropic_dis} ~Let  $P_N=\{\boldsymbol{x}_0,\boldsymbol{x}_1,\ldots,\boldsymbol{x}_{N-1}\}$  be a point set in  $[0,1)^s$. The isotropic discrepancy $J_N$ is defined to be
\begin{equation*}
    J_N(P_N)=\sup_{J\subset\mathcal{C}}\left| \displaystyle\frac{1}{N}\sum_{n=0}^{N-1}1_J(\boldsymbol{x}_n)-\lambda(J)\right|,
\end{equation*}
where $\mathcal{C}$ is the family of all convex subsets of $[0,1)^s$.\end{definition}

For  further reading about the definition and properties of discrepancy, we refer for instance to \cite{Josef,Niederriter1974}.

For our purposes here we need the definition of pseudo-convex sets which we introduce in the following (see also \cite[Definition 2]{CJJ2011} and Figure~\ref{FigPseudoconvex} for an example).
\begin{definition}\label{pseudoConvexSet}(pseudo-convex set). Let $A$ be an open subset of $[0,1]^s$ such that there exists a collection of $p$ convex  subsets $A_1,\ldots,A_p$ of $[0,1]^s$
satisfying\begin{enumerate}
            \item $A_i\cap A_j=\emptyset$ for $i\neq j$;
            \item $A\subseteq ( A_1\cup\cdots \cup A_p)$ of $[0,1]^s$ ;
            \item either $A_j$ is a convex part of $A$ ($A_j\subseteq A$ for $j=1,\ldots,q$) or the complement of $A$ with respect to $A_j$, $A_j'=A_j\backslash A$ is convex.
          \end{enumerate}
Then $A$ is called a pseudo-convex set and $A_1,\ldots, A_p$ is an admissible convex covering for $A$ with $p$ parts and with $q$ convex parts of $A$.\end{definition}

\begin{figure}\label{FigPseudoconvex}
\includegraphics[trim=0.5cm 0.25cm 0.5cm 0.5cm, clip=true, totalheight=0.4\textheight, angle=360]{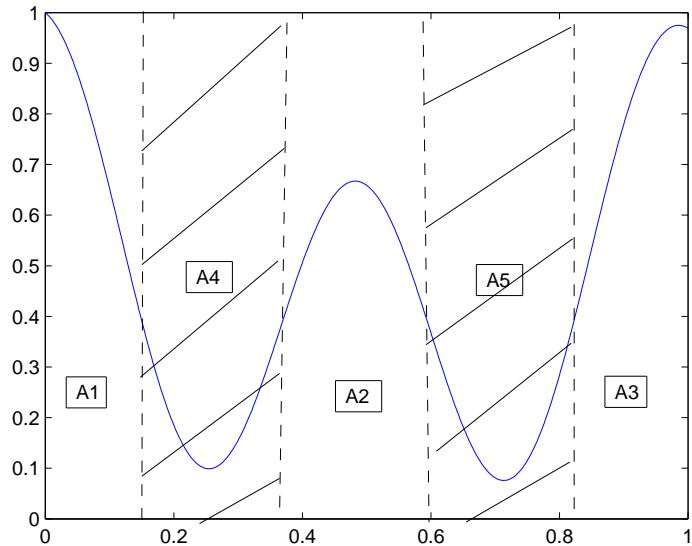}\\
   \caption{Shows a  pseudo-convex set in the unit square given by the area below graph of the density function and its admissible convex covering.
  Let A be given by the graph under the curve. Then $A_i, i=1,\ldots,5$, is an admissible convex covering of $A$, where $A_1, A_2$ and $A_3$ are convex parts in $A$ but $A_4$ and $A_5$ are rectangles shadowed region covering the remaining part of $A$. The regions $A_4\backslash A$ and $A_5\backslash A$ are convex.}
\end{figure}

\begin{remark}\label{remark-Pseudoconvex}For convenience, we call a nonnegative function pseudo-convex if and only if  the region below its graph is a pseudo-convex  set.\end{remark}

 Next we present a bound on the isotropic discrepancy of points generated by  $(t,m,s)$-nets. A detailed proof is given in Appendix~\ref{AppendixUpperBound}.

\begin{lemma}\label{IsoDisBound} Let the point set $Q_{m,s}=\{\boldsymbol{x}_0,\boldsymbol{x}_1,\ldots, \boldsymbol{x}_{M-1}\}\subseteq[0,1]^s$ be a  $(t,m,s)$-net in base $b$ where $M=b^m$.
 For the  isotropic discrepancy of $Q_{m,s}$  we have
\begin{equation*}
J_M(Q_{m,s})\leq 2sb^{t/s}M^{-1/s}.
\end{equation*}
\end{lemma}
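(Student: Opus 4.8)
The plan is a sandwich argument: approximate an arbitrary convex set from inside and outside by unions of $b$‑adic boxes on which a $(t,m,s)$‑net is, by the very definition, perfectly distributed, and then bound the volume of the resulting boundary layer by a convex‑geometry estimate.

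First I would fix a convex $J\subseteq[0,1)^s$ and write $m-t=d_1+\cdots+d_s$ with the exponents $d_i$ chosen as equal as possible. Let $\mathcal{E}$ be the partition of $[0,1)^s$ into the $b^{m-t}$ $b$‑adic elementary intervals $\prod_{i=1}^{s}[a_ib^{-d_i},(a_i+1)b^{-d_i})$ of order $m-t$. By Definition~\ref{DeftmsNet} the net $Q_{m,s}$ is fair with respect to every $b$‑adic elementary interval of order at most $m-t$; in particular each $E\in\mathcal{E}$ contains exactly $b^{m}/b^{m-t}=b^{t}$ of the points, and hence any set that is a union of cells of $\mathcal{E}$ is fair with respect to $Q_{m,s}$. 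Setting $J^{-}=\bigcup\{E\in\mathcal{E}:E\subseteq J\}$ and $J^{+}=\bigcup\{E\in\mathcal{E}:E\cap J\neq\emptyset\}$, we have $J^{-}\subseteq J\subseteq J^{+}$, the empirical measures of $J^{-}$ and $J^{+}$ equal their volumes, and both the empirical measure and the volume of $J$ lie between those of $J^{-}$ and $J^{+}$. Therefore
\[
\Bigl|\tfrac{1}{M}\textstyle\sum_{n=0}^{M-1}1_J(\boldsymbol{x}_n)-\lambda(J)\Bigr|\ \le\ \lambda(J^{+})-\lambda(J^{-})\ \le\ b^{-(m-t)}\,\#\{E\in\mathcal{E}:E\cap\partial J\neq\emptyset\}.
\]
This step uses nothing beyond the net property and additivity of Lebesgue measure, and is the routine part.

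The substantive step is to bound the number of cells of $\mathcal{E}$ that meet the boundary of the convex set $J$. Here convexity enters through the fact that every line parallel to a coordinate axis meets $\partial J$ in at most two points, so that $\partial J$ splits into $2s$ pieces $\partial_i^{\pm}J$, each the graph of a concave or convex function over the orthogonal projection of $J$ onto the hyperplane perpendicular to $e_i$. Equivalently, $J^{+}\setminus J^{-}$ is contained in the box‑neighbourhood $(\overline{J}\oplus R)\setminus(J\ominus R)$ of $\partial J$, where $R=\prod_{i=1}^{s}[-b^{-d_i},b^{-d_i}]$, and one estimates its measure by a Steiner‑type expansion of $\lambda(\overline{J}\oplus R)$ together with the monotonicity of surface area for convex subsets of the cube (the projections of $J$ onto the coordinate hyperplanes have $(s-1)$‑volume at most $1$). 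Either route is designed to give
\[
\#\{E\in\mathcal{E}:E\cap\partial J\neq\emptyset\}\ \le\ 2\,b^{m-t}\sum_{i=1}^{s}b^{-d_i},\qquad\text{i.e.}\qquad \lambda(J^{+})-\lambda(J^{-})\ \le\ 2\sum_{i=1}^{s}b^{-d_i}.
\]
Choosing the $d_i$ balanced makes $\sum_{i=1}^{s}b^{-d_i}$ as small as possible; when $s\mid m-t$ it equals $s\,b^{-(m-t)/s}$, and in general a short rounding argument still yields the bound stated in the lemma, so that taking the supremum over convex $J$ gives
\[
J_M(Q_{m,s})\ \le\ 2s\,b^{-(m-t)/s}\ =\ 2s\,b^{t/s}\,b^{-m/s}\ =\ 2s\,b^{t/s}\,M^{-1/s}.
\]

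I expect the main obstacle to be precisely the convex‑geometric count of boundary cells with the correct constant. One must handle portions of $\partial J$ that are steep or flat relative to the grid — so that the naive estimate of "at most two boundary cells per column'' is false and one has to pass through projections / surface area instead — as well as lower‑dimensional or otherwise degenerate convex sets and the cells abutting $\partial[0,1]^{s}$; and the bookkeeping, including the rounding of $(m-t)/s$ when $s\nmid m-t$, has to be carried out carefully enough to keep the constant equal to $2s$ rather than merely $O(s)$.
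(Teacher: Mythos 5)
Your proof follows essentially the same route as the paper's: sandwich an arbitrary convex $J$ between inner and outer unions of fair $b$-adic elementary intervals of order at most $m-t$, and bound the volume of the resulting boundary layer by a convex-geometry estimate with constant $2s$ — which is exactly the content of the paper's Lemma~\ref{lemmaW} (itself only proved there with the weaker constant $2s\sqrt{s}$, the sharp constant being deferred to Niederreiter--Wills). The only differences are cosmetic: you partition into balanced boxes of order exactly $m-t$ where the paper uses cubes of side $b^{-\lfloor(m-t)/s\rfloor}$, and your deferred boundary-cell count plays the role of that lemma, so the proposal is correct at the same level of detail as the paper, including sharing the paper's small rounding slack in the final exponent when $s\nmid m-t$.
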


A slightly weaker result than Lemma~\ref{IsoDisBound} can also be obtained from \cite[Korollar~3]{NW75}.
\begin{lemma}\label{StarIsotropic} For any point set $P_N$ in $[0,1]^s$ we have
\begin{equation*}
J_N(P_N) \le 2 s \left(\frac{4s}{s-1}\right)^{(s-1)/s} ( D_N^*(P_N) )^{1/s}.
\end{equation*}
\end{lemma}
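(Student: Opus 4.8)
The plan is to bound the isotropic discrepancy $J_N(P_N)$ by the star discrepancy $D_N^*(P_N)$ via a geometric approximation of convex sets by axis-parallel boxes. The standard route, going back to Niederreiter (and to the classical inequality relating isotropic and extreme discrepancy), is to sandwich an arbitrary convex set $C \subseteq [0,1)^s$ between an inscribed and a circumscribed body built out of boxes on which we have control via $D_N^*$, and then to estimate the Lebesgue measure of the ``gap'' between the two.

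More concretely, first I would recall that the extreme (or ``unanchored'') discrepancy $D_N(P_N)$, the supremum of $|\frac1N\sum 1_B(\boldsymbol{x}_n) - \lambda(B)|$ over all axis-parallel boxes $B = \prod_i [\alpha_i,\beta_i) \subseteq [0,1)^s$, satisfies $D_N(P_N) \le 2^s D_N^*(P_N)$ by the usual inclusion-exclusion of anchored boxes. Then, given a convex set $C$, partition $[0,1)^s$ into a grid of $m^s$ congruent subcubes of side $1/m$ (where $m$ is a parameter to be optimized at the end). Let $C^-$ be the union of all grid cubes entirely contained in $C$ and $C^+$ the union of all grid cubes meeting $C$. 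Then $C^- \subseteq C \subseteq C^+$, so
\begin{align*}
\frac1N\sum_{n=0}^{N-1} 1_C(\boldsymbol{x}_n) - \lambda(C) &\le \frac1N\sum_{n=0}^{N-1}1_{C^+}(\boldsymbol{x}_n) - \lambda(C^+) + \lambda(C^+) - \lambda(C),
\end{align*}
and symmetrically for the lower bound with $C^-$. The first difference is bounded by (number of cubes in $C^+$)$\,\cdot\, D_N(P_N)$, but more efficiently one bounds it by $D_N(P_N)$ applied cube-by-cube only to the \emph{boundary} cubes; the measure terms $\lambda(C^+)-\lambda(C)$ and $\lambda(C)-\lambda(C^-)$ are each at most $\frac1{m^s}\cdot(\text{number of boundary grid cubes})$.

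The key geometric input is that a convex body in $[0,1)^s$ meets at most $O(s\, m^{s-1})$ of the $m^s$ grid cubes of side $1/m$ — its boundary, being convex, has $(s{-}1)$-dimensional ``surface area'' bounded (by monotonicity of surface area for nested convex sets, since $C \subseteq [0,1]^s$) by that of the cube, namely $2s$, so the number of cubes it pierces is at most $\le 2s\,m^{s-1}$ up to a constant. Plugging this in gives $J_N(P_N) \le c_s\, m^{s-1} D_N^*(P_N) + c_s'\, m^{-1}$ for explicit constants depending on $s$ (here $2^s$ folds into $c_s$). Optimizing over the integer $m$ — choosing $m \approx (D_N^*(P_N))^{-1/s}$ — balances the two terms and yields $J_N(P_N) \le C_s (D_N^*(P_N))^{1/s}$, and a careful bookkeeping of the constants (tracking the $2s$ surface-area bound, the $4s/(s-1)$ type factor from optimizing, and the $2^s$ from passing to boxes) produces exactly the stated constant $2s\left(\tfrac{4s}{s-1}\right)^{(s-1)/s}$. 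The statement being cited essentially verbatim from the literature (this is a known inequality; cf. Niederreiter's book and Kuipers--Niederreiter), I would in fact simply give the short derivation above and refer to \cite{Niederriter1974} for the sharp constant, rather than re-deriving the optimal numerical factor from scratch.

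\textbf{Main obstacle.} The delicate point is getting the \emph{constant} right rather than just the exponent $1/s$: one must be careful that the surface-area / cube-counting bound for convex subsets of $[0,1]^s$ is $2s\,m^{s-1}$ (not merely $O(m^{s-1})$), that passing from star to extreme discrepancy and then summing box-by-box over boundary cubes does not introduce an extra factor growing with $m$, and that the two error terms are combined and optimized over \emph{integer} $m$ without loss. If only the order of magnitude $J_N(P_N) = O_s((D_N^*)^{1/s})$ were needed, the argument is routine; extracting the clean closed-form constant is where the real care lies, and is why the cleanest exposition invokes the known result.
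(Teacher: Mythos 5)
The paper does not actually prove this lemma: it is quoted from the literature (Niederreiter--Wills \cite{NW75}; see also the book of Kuipers and Niederreiter), so your decision to cite rather than re-derive the sharp constant is consistent with the paper's own treatment. The issue is with the proof sketch you offer, which as written does not deliver the exponent $1/s$.

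The gap is in the step ``the first difference is bounded by \dots $D_N(P_N)$ applied cube-by-cube only to the boundary cubes.'' The quantity $\frac1N\sum_n 1_{C^+}(\boldsymbol{x}_n)-\lambda(C^+)$ decomposes as $\Delta(C^-)+\Delta(C^+\setminus C^-)$, and only the second term lives on boundary cubes; the first term involves the $\sim m^s$ interior cubes of $C^-$, and bounding it cube-by-cube gives $m^s D_N(P_N)$. Balancing $m^sD_N^*+c_s/m$ then yields only $(D_N^*)^{1/(s+1)}$, not $(D_N^*)^{1/s}$. The missing idea is a \emph{column decomposition}: fix one coordinate direction and partition the grid into $m^{s-1}$ columns of $m$ stacked cubes each; by convexity of $C$, the cubes of $C^-$ (respectively $C^+$) lying in a fixed column are consecutive, so $C^-$ and $C^+$ are each disjoint unions of at most $m^{s-1}$ axis-parallel boxes, whence $|\Delta(C^{\pm})|\le m^{s-1}\,2^s D_N^*(P_N)$ (the $2^s$ from passing from anchored to arbitrary boxes). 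Combined with $\lambda(C^+\setminus C^-)\le 4s/m$ (your surface-area bound, applied to both $C^+\setminus C$ and $C\setminus C^-$), minimizing
\begin{equation*}
2^s D_N^*(P_N)\, m^{s-1}+\frac{4s}{m}
\end{equation*}
over $m$ gives precisely $2s\left(\frac{4s}{s-1}\right)^{(s-1)/s}(D_N^*(P_N))^{1/s}$, which confirms that this, and not the boundary-cube count, is where the stated constant comes from. (One still has to address that the optimal $m$ need not be an integer, and the bound is vacuous or needs separate treatment for $s=1$.)
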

Further it is known from \cite{Kr06} that the star discrepancy of a $(t,m,s)$-net $Q_{m,s}$ in base $b$, where $M = b^m$, satisfies
\begin{equation*}
D_{M}^*(Q_{m,s}) \le M^{-1} b^{t} (\log M)^{s-1} \frac{b^s}{(b+1) 2^s (s-1)! (\log b)^{s-1}} + C_s M^{-1} b^{t} (\log M)^{s-2},
\end{equation*}
for some constant $C_s > 0$. These two inequalities therefore yield a convergence rate of order $M^{-1/s} (\log M)^{1-1/s}$.

The following lemma will be used to get a discrepancy bound  for a point set on a pseudo-convex set. It is an extension of \cite[Lemma 5]{CJJ2011} to the unit cube.
\begin{lemma} \label{lemmaConvex}
Let $A$ be a pseudo-convex subset of $[0,1]^s$ with admissible convex covering of $p$ parts with $q$ convex parts of $A$. Then for any point set $P_N=\{\boldsymbol{x}_0,\boldsymbol{x}_1,\ldots,\boldsymbol{x}_{N-1}\}\subseteq [0,1]^s$ we have
\begin{equation*}
    \left|\displaystyle\frac{1}{N}\sum_{n=0}^{N-1}1_A(\boldsymbol{x}_n)-\lambda(A)\right|\leq (2p-q) J_N(P_N).
\end{equation*}
\end{lemma}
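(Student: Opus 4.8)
The plan is to use the disjointness of the admissible convex covering to split the local discrepancy of $A$ into a signed sum of local discrepancies of convex sets, and then to bound each term by the isotropic discrepancy. Throughout, write $\Delta(B):=\frac{1}{N}\sum_{n=0}^{N-1}1_B(\boldsymbol{x}_n)-\lambda(B)$ for a measurable set $B\subseteq[0,1]^s$, so that the quantity to be estimated is $|\Delta(A)|$, and note that by Definition~\ref{isotropic_dis} we have $|\Delta(B)|\le J_N(P_N)$ whenever $B$ is convex (intersecting with $[0,1)^s$ if necessary changes neither $\lambda(B)$ nor, under the usual half-open convention for the test sets, the empirical count).

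First I would record the elementary decomposition coming from conditions (1) and (2) of Definition~\ref{pseudoConvexSet}: since $A\subseteq A_1\cup\cdots\cup A_p$ and the $A_j$ are pairwise disjoint, each point lies in at most one $A_j$, so $1_A=\sum_{j=1}^p 1_{A\cap A_j}$ pointwise and $\lambda(A)=\sum_{j=1}^p\lambda(A\cap A_j)$; consequently $\Delta(A)=\sum_{j=1}^p\Delta(A\cap A_j)$.

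Next I would handle the $p$ pieces according to condition (3). For $j=1,\dots,q$ we have $A_j\subseteq A$, hence $A\cap A_j=A_j$ is convex and $|\Delta(A\cap A_j)|\le J_N(P_N)$. For $j=q+1,\dots,p$ we have $A_j'=A_j\setminus A\subseteq A_j$ with $A_j'$ convex and $A\cap A_j=A_j\setminus A_j'$, so $1_{A\cap A_j}=1_{A_j}-1_{A_j'}$, giving $\Delta(A\cap A_j)=\Delta(A_j)-\Delta(A_j')$ and $|\Delta(A\cap A_j)|\le 2J_N(P_N)$ since both $A_j$ and $A_j'$ are convex. Summing the two groups of estimates via the triangle inequality yields
\[
|\Delta(A)|\le \sum_{j=1}^q |\Delta(A_j)| + \sum_{j=q+1}^p\bigl(|\Delta(A_j)|+|\Delta(A_j')|\bigr)\le qJ_N(P_N)+2(p-q)J_N(P_N)=(2p-q)J_N(P_N),
\]
which is the asserted bound; this is the unit-cube analogue of \cite[Lemma~5]{CJJ2011}.

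The proof is essentially set-theoretic bookkeeping, so I do not expect a real obstacle. The one step that must be done carefully is the clean additive splitting $1_A=\sum_j 1_{A\cap A_j}$ and $\lambda(A)=\sum_j\lambda(A\cap A_j)$: it depends on the disjointness in condition (1), and without it overlaps would have to be resolved by inclusion--exclusion, worsening the constant $(2p-q)$. A secondary point to verify is that the non-convex pieces are handled correctly, i.e.\ that removing the convex complement $A_j'$ indeed produces $A\cap A_j$ and that $A_j'\subseteq A_j$, so that the indicator subtracts cleanly.
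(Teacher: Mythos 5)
Your argument is correct and is essentially the intended one: the paper itself omits the proof, deferring to \cite[Lemma~5]{CJJ2011}, but the decomposition you use, namely $A=\bigcup_{j=1}^q A_j\cup\bigcup_{j=q+1}^p(A_j\setminus A_j')$ with each convex piece bounded by $J_N(P_N)$ and each non-convex piece costing two applications of the isotropic discrepancy, is exactly the decomposition the paper employs in the proof of its Corollary~\ref{CorollaryWPseudo}. The bookkeeping $q\cdot J_N+2(p-q)\cdot J_N=(2p-q)J_N$ and your attention to the disjointness and to $A_j'\subseteq A_j$ are all in order.
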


\section{Discrepancy investigation of deterministic acceptance-rejection sampler}\label{TheoricalResults}

The first result we get is a discrepancy bound with respect to the target density of samples generated by the acceptance-rejection algorithm with deterministic driver sequences.
The star discrepancy of points generated by the acceptance-rejection algorithm with respect to the target density  converges  at the rate of $N^{-1/s}$, where $N$ is the number of accepted samples. See Theorem~\ref{Maintheorem} for details.
The proof  uses a bound on the discrepancy of our driver sequence with respect to convex sets (which is called isotropic discrepancy, see Definition~\ref{isotropic_dis} for details).

\subsection{Upper bound }\label{UpboundCube}
Let an unnormalized  density function $\psi:[0,1]^{s-1}\to \mathbb{R}_+$ be pseudo-convex, and $\int_{[0,1]^{s-1}}\psi (\boldsymbol{z})d\boldsymbol{z} >0$, but not necessarily $1$.
 Assume that there exists a constant $L<\infty$ such that $\psi(\boldsymbol{x}) \leq L$ for  all $\boldsymbol{x}\in[0,1]^{s-1}$. Let the subset under the graph of $\psi/L$  be defined as
\begin{equation}\label{SubsetA}
  A=\{\boldsymbol{x}\in[0,1]^s:\psi(x_1,\ldots,x_{s-1})\geq L x_s\},
\end{equation}
 which is pseudo-convex in $[0,1]^s$ as $\psi$ is a  pseudo-convex function. Assume that there is an admissible convex covering of $A$ with $p$ parts and with $q$ convex parts of $A$.
  Without loss of generality, let $A_1,\ldots,A_q$ be the convex subsets of $A$ and $A_{q+1},\ldots,A_p$, such that  $A_j'=A_j\backslash A$ is convex for  $q+1\leq j\leq p$.

 The definition of the star discrepancy of a point set $\{\boldsymbol{y}_0,\boldsymbol{y}_1,\ldots, \boldsymbol{y}_{N-1}\}$ with respect to a
 density function $\psi$ is given as follows.

\begin{definition} Let $\psi:[0,1]^{s-1}\to \mathbb{R}_+$ be an unnormalized  target density. Let $\{\boldsymbol{y}_0,\boldsymbol{y}_1,\ldots, \boldsymbol{y}_{N-1}\}$ be a
 point set in $[0,1]^{s-1}$. The star discrepancy of $\{\boldsymbol{y}_0,\boldsymbol{y}_1,\ldots, \boldsymbol{y}_{N-1}\}$ with respect to the  density $\psi$ is defined by
\begin{equation*}\label{DefDiscrepancyWRTdensity}
   D_{N,\psi}^*(P_N^{(s-1)})=\sup_{\boldsymbol{t}\in[0,1]^{s-1}} \left|\displaystyle\frac{1}{N}\sum_{n=0}^{N-1}1_{[\boldsymbol{0},\boldsymbol{t})}({\boldsymbol{y}_n})
    -\displaystyle\frac{1}{C}\int_{[\boldsymbol{0},\boldsymbol{t})}\psi({\boldsymbol{z}})d\boldsymbol{z}\right|,
\end{equation*}
where $C=\int_{[0,1]^{s-1}}\psi(\boldsymbol{z})d\boldsymbol{z}$ and $[\boldsymbol{0},\boldsymbol{t})=\prod_{j=1}^{s-1}[0,t_j)$.
\end{definition}
\begin{remark}
Note that $\frac{1}{C}\psi$ is a probability density function on $[0,1]^{s-1}$. Thus  the discrepancy  in Definition \ref{DefDiscrepancyWRTdensity}
measures the difference between the distribution $\frac{1}{C}\psi$ and the empirical distribution of the sample points with respect to the test
 sets $[\boldsymbol{0}, \boldsymbol{t})$ for $\boldsymbol{t}\in [0,1]^{s-1}$.
\end{remark}

\begin{theorem} \label{Maintheorem}
Let the unnormalized  density function $\psi:[0,1]^{s-1}\to \mathbb{R}_+$, with $s\geq 2$, be pseudo-convex.  Assume that there is an admissible convex covering of
 $A$ given by Equation~\eqref{SubsetA}
 with $p$ parts and with $q$ convex parts of $A$.
Then the  discrepancy of the point set $\{\boldsymbol{y}_0,\boldsymbol{y}_2,\ldots, \boldsymbol{y}_{N-1}\} \subseteq [0,1]^{s-1}$ generated by
Algorithm \ref{algorithmAR} using a $(t,s)$-sequence in base $b$, for large enough $N$, satisfies
\begin{equation*}
   D_{N,\psi}^*(P_N^{(s-1)})
    \le 8C^{-1}L s b^{t/s}(2p-q) N^{-1/s},
  \end{equation*}
  where $C=\int_{[0,1]^{s-1}}\psi({\boldsymbol{z}})d\boldsymbol{z}$ and $\psi (\boldsymbol{x})\le L$ for all $\boldsymbol{x}\in[0,1]^{s-1}$.
\end{theorem}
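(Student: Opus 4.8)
The plan is to pull the discrepancy back onto the driver net $Q_{m,s}$, where Lemma~\ref{IsoDisBound} and Lemma~\ref{lemmaConvex} are available, and then renormalise. Fix a test box $[\boldsymbol{0},\boldsymbol{t})$ with $\boldsymbol{t}\in[0,1]^{s-1}$ and set $R_{\boldsymbol{t}}=[\boldsymbol{0},\boldsymbol{t})\times[0,1]$ and $B_{\boldsymbol{t}}=A\cap R_{\boldsymbol{t}}$, where $A$ is the region below the graph of $\psi/L$ from \eqref{SubsetA}. By the construction of Algorithm~\ref{algorithmAR}, a net point $\boldsymbol{x}_n\in Q_{m,s}$ is accepted with projection $\boldsymbol{y}_n\in[\boldsymbol{0},\boldsymbol{t})$ exactly when $\boldsymbol{x}_n\in B_{\boldsymbol{t}}$, and it is accepted at all exactly when $\boldsymbol{x}_n\in A$. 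Writing $M=b^m$, this turns the counting term $\frac1N\sum_n 1_{[\boldsymbol0,\boldsymbol t)}(\boldsymbol y_n)$ into $\tfrac1M\sum_n 1_{B_{\boldsymbol{t}}}(\boldsymbol{x}_n)$ divided by $\tfrac1M\sum_n 1_A(\boldsymbol{x}_n)=N/M$, and, using $\lambda_s(B_{\boldsymbol{t}})=L^{-1}\int_{[\boldsymbol{0},\boldsymbol{t})}\psi$ and $\lambda_s(A)=C/L$, turns the integral term into $L\lambda_s(B_{\boldsymbol{t}})/C$.

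The key geometric step is to check that $B_{\boldsymbol{t}}$ is again pseudo-convex with the \emph{same} parameters $p$ and $q$ as $A$. I would take the admissible convex covering $A_1,\dots,A_p$ of $A$ (with $A_1,\dots,A_q\subseteq A$ and $A_j'=A_j\setminus A$ convex for $j>q$) and use $A_1\cap R_{\boldsymbol{t}},\dots,A_p\cap R_{\boldsymbol{t}}$ as a covering of $B_{\boldsymbol{t}}$: each piece is convex as an intersection of convex sets, the pieces stay pairwise disjoint and still cover $B_{\boldsymbol{t}}$, for $j\le q$ one has $A_j\cap R_{\boldsymbol{t}}\subseteq B_{\boldsymbol{t}}$, and for $j>q$ one has $(A_j\cap R_{\boldsymbol{t}})\setminus B_{\boldsymbol{t}}=A_j'\cap R_{\boldsymbol{t}}$, which is convex. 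Then Lemma~\ref{lemmaConvex} applied to $Q_{m,s}$ with $B_{\boldsymbol{t}}$ (and, taking $\boldsymbol{t}=\boldsymbol{1}$, with $A$), followed by the net bound $J_M(Q_{m,s})\le 2sb^{t/s}M^{-1/s}$ of Lemma~\ref{IsoDisBound}, gives, uniformly in $\boldsymbol{t}$,
\[
\Bigl|\tfrac1M\sum_{n=0}^{M-1}1_{B_{\boldsymbol{t}}}(\boldsymbol{x}_n)-\lambda_s(B_{\boldsymbol{t}})\Bigr|\le\varepsilon_M,\qquad\Bigl|\tfrac NM-\tfrac CL\Bigr|\le\varepsilon_M,\qquad \varepsilon_M:=2sb^{t/s}(2p-q)M^{-1/s}.
\]

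To combine these I would set $a(\boldsymbol{t})=\tfrac1M\sum_n 1_{B_{\boldsymbol{t}}}(\boldsymbol{x}_n)$, $b(\boldsymbol{t})=\lambda_s(B_{\boldsymbol{t}})$, $a_1=N/M$, $b_1=C/L$, note $0\le a(\boldsymbol{t})\le a_1$ since $B_{\boldsymbol{t}}\subseteq A$, and estimate
\[
\Bigl|\frac{a(\boldsymbol{t})}{a_1}-\frac{b(\boldsymbol{t})}{b_1}\Bigr|=\frac{|a(\boldsymbol{t})(b_1-a_1)+a_1(a(\boldsymbol{t})-b(\boldsymbol{t}))|}{a_1b_1}\le\frac{(a(\boldsymbol{t})+a_1)\varepsilon_M}{a_1b_1}\le\frac{2\varepsilon_M}{b_1}=\frac{2L\varepsilon_M}{C},
\]
so taking the supremum over $\boldsymbol{t}$, $D^*_{N,\psi}(P_N^{(s-1)})\le 2L\varepsilon_M/C=4C^{-1}Lsb^{t/s}(2p-q)M^{-1/s}$. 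Finally, to pass from $M$ to $N$, I would observe that since $\varepsilon_M\to0$, for $N$ (equivalently $M$) large enough $\varepsilon_M\le C/L$, whence $N/M=a_1\le b_1+\varepsilon_M\le 2C/L$, i.e. $M\ge NL/(2C)$; together with $C=\int_{[0,1]^{s-1}}\psi\le L$ this gives $M^{-1/s}\le(2C/L)^{1/s}N^{-1/s}\le 2N^{-1/s}$, and substituting yields the claimed bound $8C^{-1}Lsb^{t/s}(2p-q)N^{-1/s}$.

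The step I expect to be the main obstacle is the pseudo-convexity of the slices $B_{\boldsymbol{t}}$ with unchanged covering parameters, since this is exactly what lets a single isotropic-discrepancy estimate control all anchored test boxes $[\boldsymbol{0},\boldsymbol{t})$ simultaneously; the remainder is the ratio bookkeeping above, whose only subtlety is that $N$ denotes the (deterministic but a priori not prescribed) number of accepted points and must be tied back to $M$ through the net property — which also forces the "$N$ large enough" qualifier in the statement.
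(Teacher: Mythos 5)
Your proposal is correct and follows essentially the same route as the paper's proof: pull the test box back to the driver net via $J_{\boldsymbol{t}}^*=([\boldsymbol{0},\boldsymbol{t})\times[0,1])\cap A$, control both the numerator and the acceptance count $N/M$ through Lemma~\ref{lemmaConvex} and Lemma~\ref{IsoDisBound}, and then renormalise using $N/M\to C/L$, arriving at the same constant $8$. Your explicit check that the slices $A_j\cap R_{\boldsymbol{t}}$ give an admissible covering of $B_{\boldsymbol{t}}$ with unchanged $(p,q)$ is a welcome detail that the paper only asserts; the remaining differences (difference-of-ratios identity versus the paper's triangle-inequality split, and $M\ge NL/(2C)$ with $C\le L$ versus $N\le M$) are purely bookkeeping.
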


We postpone the proof of this theorem to Appendix~\ref{AppendixUpperBound}.

\subsection{Lower bound}\label{Lowerbound}
In this section, we provide a lower bound on the star discrepancy with respect to a  convex density function. The general idea is to find, for a given driver point set, a density function satisfying a certain convergence rate.
{
\begin{theorem}\label{lowerPsiN} Let $P_M$ be an arbitrary  point set   in $[0,1]^s$. Then there exists a concave density function $\psi$ defined in $[0,1]^{s-1}$ such that, for $N$ samples generated by the acceptance-rejection algorithm with respect to $P_M$ and $\psi$, we have
\begin{equation*}
 D_{N,\psi}^*(P_N)\ge c_sN^{-\frac{2}{s+1}},
\end{equation*}
where $c_s >0$ is independent of $N$ and $P_M$ but depends on $s$.
\end{theorem}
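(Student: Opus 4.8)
The plan is to convert the desired bound into a statement about how badly the driver set $P_M$ resolves a rich family of convex subsets of $[0,1]^s$, and then to appeal to (a version of) the classical lower bound for the discrepancy with respect to convex sets. Write $\boldsymbol x=(x',x_s)$ with $x'\in[0,1]^{s-1}$, let $L$ be an upper bound for $\psi$, and put $A=\{\boldsymbol x\in[0,1]^s:\ Lx_s\le\psi(x')\}$; since $\psi$ is concave, $A$ is convex. The acceptance step keeps exactly $P_M\cap A$, so $N=\#(P_M\cap A)$, and for a corner $\boldsymbol t\in[0,1]^{s-1}$ the number of accepted‑and‑projected points lying in $[\boldsymbol0,\boldsymbol t)$ equals $\#(P_M\cap A_{\boldsymbol t})$ with $A_{\boldsymbol t}:=A\cap([\boldsymbol0,\boldsymbol t)\times[0,1])$. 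Using $C=\int_{[0,1]^{s-1}}\psi=L\lambda(A)$ and $\int_{[\boldsymbol0,\boldsymbol t)}\psi(z)\,\mathrm dz=L\lambda(A_{\boldsymbol t})$, one gets
\[
 D_{N,\psi}^*(P_N)\ \ge\ \Bigl|\frac{\#(P_M\cap A_{\boldsymbol t})}{N}-\frac{\lambda(A_{\boldsymbol t})}{\lambda(A)}\Bigr|\qquad\text{for every }\boldsymbol t .
\]
Hence it suffices to produce a concave density $\psi$ (allowed to depend on $P_M$) with $\lambda(A)$ bounded away from $0$ and a corner $\boldsymbol t$ such that $\bigl|\#(P_M\cap A_{\boldsymbol t})-N\lambda(A_{\boldsymbol t})/\lambda(A)\bigr|\ge c_s N^{\,1-2/(s+1)}$; once a few degenerate configurations are removed we also have $N\asymp M$, so this is a purely geometric statement about $P_M$ and the bodies $A_{\boldsymbol t}$.

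I would search for $\psi$ among the \emph{cap densities} $\psi_{u,\rho,\beta}(x')=L\min\!\bigl(\tfrac78,\ \tfrac78+\beta\rho^2-\beta|x'-u|^2\bigr)$, which are concave, take values in $[\tfrac78L,L]$ (so $\lambda(A)\ge\tfrac78$), and for which $A$ is the slab $[0,1]^{s-1}\times[0,\tfrac78)$ carrying a paraboloidal cap of centre $u$, radius $\asymp\rho$ and curvature $\asymp\beta$; the bodies $A_{\boldsymbol t}$ are then convex with a curved boundary piece of controlled curvature. If at least a quarter of $P_M$ has $x_s<\tfrac78$, every cap density accepts $\ge M/4$ points, so $N\asymp M$ and we may proceed. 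The complementary situations are handled separately: if the projection of $P_M$ onto its first $s-1$ coordinates already has star discrepancy $\ge\tfrac1{16}$, the constant density $\psi\equiv L$ (which accepts everything) finishes; and if $P_M$ is concentrated in the $x_s$-direction, a parabolic dome $\psi(x')=L\bigl(1-a|x'-x_0|^2\bigr)$ with $a$ at its largest admissible value forces every accepted point's first $s-1$ coordinates into a Euclidean ball around $x_0$ which, for a suitable $x_0$, is disjoint from a corner box of volume bounded below, whence $D_{N,\psi}^*\ge$ const.

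It remains to show: for every $P_M\subset[0,1]^s$ some cap density $\psi_{u,\rho,\beta}$ and some corner $\boldsymbol t$ satisfy $\bigl|\#(P_M\cap A_{\boldsymbol t})-M\lambda(A_{\boldsymbol t})\bigr|\gtrsim M^{\,1-2/(s+1)}$. This is exactly the content of the classical lower bound for the discrepancy of an $M$-point set with respect to convex sets (Schmidt; Beck), carried out inside our restricted class: take $\rho\asymp M^{-1/(s+1)}$ and $\beta$ as large as admissible, so each cap has volume $\asymp\rho^{s-1}$ and there are $\asymp\rho^{-(s-1)}\asymp M^{(s-1)/(s+1)}$ essentially disjoint cap centres tiling the top face of the slab; assuming that every such cap, and every concentric sub‑cap obtained by shrinking $\rho$, had counting discrepancy below $\varepsilon M^{1-2/(s+1)}$, comparing the point counts of $P_M$ in these caps with their volumes — equivalently, inserting the curvature‑induced Fourier decay $|\widehat{\mathbf 1_{\mathrm{cap}}}(\xi)|\lesssim|\xi|^{-(s+1)/2}$ into the $L^2$ identity for the discrepancy function — yields a contradiction once $\varepsilon=\varepsilon(s)$ is small. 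Since $\psi$ may depend on $P_M$, we keep the cap the argument selects; combined with the reduction and $N\asymp M$ this gives $D_{N,\psi}^*(P_N)\gtrsim N^{-1}M^{\,1-2/(s+1)}\gtrsim N^{-2/(s+1)}$, with $c_s>0$ depending only on $s$.

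The hard part is this last step together with its interaction with the normalisation: one must run the classical isotropic‑discrepancy lower bound using \emph{only} convex bodies of the very special form ``slab plus paraboloidal cap lying under the graph of a concave function over a coordinate box'', while pinning the density between $\tfrac78L$ and $L$ so that $N$ stays comparable to $M$. The caps that appear in the classical proofs are admissible bodies of precisely this kind, so the argument does go through, but keeping track of which caps and sub‑caps are realisable in this class, and of the case distinctions that secure $N\asymp M$, is where the real work lies.
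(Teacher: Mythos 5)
Your reduction to the counting discrepancy of the driver set with respect to the bodies $A_{\boldsymbol{t}}$, and your care with the normalisation ($\lambda(A)$ bounded away from $0$, $N\asymp M$, the degenerate configurations) are sound, and indeed more explicit than the paper's own write-up on those points. The gap is in the key quantitative step. The classical bound $\gtrsim M^{1-2/(s+1)}$ (Schmidt's argument, which the paper follows) is \emph{not} obtained from a single cap: a single cap of volume $1/(2M)$ only has counting discrepancy $\ge \tfrac12$. The exponent comes from \emph{accumulating} the $\ge\tfrac12$ local discrepancies of $\asymp M^{(s-1)/(s+1)}$ pairwise disjoint caps into one admissible test body, namely a ball (in the paper, a hemisphere) from which an adaptively chosen sub-collection of the solid caps has been removed --- a convex body with very many dents. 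Your admissible class consists of densities carrying a \emph{single} paraboloidal cap, and that class is provably too small: for a jittered (stratified) $M$-point set, every body cut out by one smooth cap, together with all its concentric sub-caps, translates and anchored-box intersections, has counting discrepancy $O\bigl(M^{(s-1)/(2s)}\sqrt{\log M}\bigr)$ simultaneously, and $(s-1)/(2s)<(s-1)/(s+1)$ for every $s\ge 2$. The Fourier route you sketch is Beck's $L^2$ method for translates/dilates of a fixed smooth body and likewise yields only the exponent $(s-1)/(2s)$; note also that the decay estimate $|\widehat{\mathbf{1}_{\mathrm{cap}}}(\xi)|\lesssim|\xi|^{-(s+1)/2}$ is an upper bound, whereas a discrepancy lower bound needs lower bounds on averaged Fourier mass. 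So the intermediate claim ``some single-cap density and some corner give $\gtrsim M^{1-2/(s+1)}$'' is false as stated.

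The repair is to enlarge the family to concave densities with many adaptively placed dents: a minimum of finitely many concave functions is concave, so ``dome minus a chosen sub-collection of paraboloidal dents'' is an admissible target density. That is exactly what the paper does: it packs the upper hemisphere with $\asymp N^{(s-1)/(s+1)}$ caps of volume $1/(2N)$, chooses signs $\sigma_i$ so that each retained or removed cap contributes $+\tfrac12$ to the local discrepancy of the resulting convex body $J$, obtains $|\Delta(J)|$ at least a quarter of the number of caps, and takes $\psi$ to be the concave function whose subgraph is $J$. A secondary inconsistency in your class: $L\min\bigl(\tfrac78,\tfrac78+\beta\rho^2-\beta|x'-u|^2\bigr)$ is concave but is \emph{not} bounded below by $\tfrac78 L$ (it decreases quadratically away from $u$), while the described ``slab plus cap'' density would be a maximum of concave functions, which is not concave; either way the stated properties of the family do not hold simultaneously, and the $N\asymp M$ reduction would have to be rechecked for whichever variant you keep.
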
}

A detailed proof is provided in Appendix~\ref{AppendixLowerbound}. We would like to point out that
the lower bound also limits the convergence rates which we can obtain in our current approach via convex sets.
Note that a concave function is also pseudo-convex as defined in Remark~\ref{remark-Pseudoconvex}.

Additionally,  note that \cite{Beck} (in dimension $s=2$) and \cite{Stu77} (for dimension $s > 2$)
showed the existence of points with discrepancy with respect to convex sets bounded from above by $N^{-2/(s+1)} (\log N)^{c(s)}$ (where $c(s)$ is a function of only $s$). This would yield an improvement of our results from $N^{-1/s}$ to $N^{-2/(s+1)}(\log N)^{c(s)}$, however, those constructions are not explicit and can therefore not be used in computation.

\subsection{Generalization to real state space}\label{BoundsGeneralization}
We consider now the case where the target density is defined on $\mathbb{R}^{s-1}$ with $s\ge 2$.  The aim is to show a discrepancy bound on samples generated by the deterministic acceptance-rejection method.
The discrepancy  with respect to a given density function $\psi:\mathbb{R}^{s-1}\to \mathbb{R}_+$ is defined as follows.
\begin{definition}
Let $P_N=\{\boldsymbol{z}_0,\boldsymbol{z}_1,\ldots,\boldsymbol{z}_{N-1}\}$ be a point set in $\mathbb{R}^{s-1}$. Let $\psi:\mathbb{R}^{s-1}\to \mathbb{R}_+ $ be an unnormalized probability density function. Then the star discrepancy
 $D_{N,\psi}^*(P_N)$ is defined by
\begin{equation*}
    D_{N,\psi}^*(P_N)=\sup_{\boldsymbol{t}\in \mathbb{R}^{s-1}}\left|\frac{1}{N}\sum_{n=0}^{N-1}1_{(-\boldsymbol{\infty},\boldsymbol{t}]}(\boldsymbol{z}_n)-\frac{1}{C}\int_{(-\boldsymbol{\infty},\boldsymbol{t}]} \psi(\boldsymbol{z})d\boldsymbol{z}\right|,
   \end{equation*}
where $C=\int_{\mathbb{R}^{s-1}} \psi(\boldsymbol{z})d\boldsymbol{z}$ and $(-\boldsymbol{\infty},\boldsymbol{t}]=\prod_{j=1}^{s-1}(-\infty, t_j]$ for $\boldsymbol{t}=(t_1, \ldots,t_{s-1})$.
\end{definition}

We use the transformation $T$ given in Equation~\eqref{InverseRosenblattTrans} to generate samples of ${H}$. For the sake of investigating discrepancy, the following result is helpful. The lemma shows that the transformations $T$ and its inversion $T^{-1}$  are both measure-preserving. For the proofs later, we assume that the proposal density $H$ is a product measure, i.e. $H=\prod_{j=1}^{s-1}H_j$, where $H_j$ is the marginal density with respect to $z_j$. In our numerical examples, the proposal density is not necessarily of product type.

\begin{lemma} The transformation $T$ from the $s$-dimensional unit cube to $\mathbb{R}^{s-1}\times \mathbb{R}_+$ given in (\ref{InverseRosenblattTrans})
 is measure-preserving, i.e. $\mbox{Volume}(T(D))= \mbox{Volume}(D)$ holds for any measurable set $D\subseteq [0,1]^s$. This is true for $T^{-1}$ as well.
\end{lemma}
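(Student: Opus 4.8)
The plan is to verify that $T$ preserves Lebesgue measure by exhibiting it as a composition of elementary measure-preserving maps and computing the Jacobian determinant directly. Since $H = \prod_{j=1}^{s-1} H_j$ is a product density, the inverse Rosenblatt transformation in the first $s-1$ coordinates decouples: writing $F_j$ for the CDF of $H_j$, the conditional CDFs $F_j(z_j\mid z_1,\dots,z_{j-1})$ are simply $F_j(z_j)$, so \eqref{InverseRosenblattTrans} reduces to $z_j = F_j^{-1}(u_j)$ for $1 \le j \le s-1$ and $z_s = u_s H(z_1,\dots,z_{s-1})$. First I would record that each $F_j^{-1}$ is a well-defined increasing map wherever $H_j > 0$ (on the support of $H_j$), so that $T$ is a bijection from $[0,1]^s$ (minus a null set where some $H_j$ vanishes) onto $\mathrm{supp}(H)\times\mathbb{R}_+$, with $T^{-1}$ given by $u_j = F_j(z_j)$ for $1\le j\le s-1$ and $u_s = z_s/H(z_1,\dots,z_{s-1})$.

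Next I would compute the Jacobian matrix $\partial\boldsymbol{z}/\partial\boldsymbol{u}$. Because $z_j$ depends only on $u_j$ for $j\le s-1$, the matrix is lower triangular: its $(j,j)$ entry for $j\le s-1$ is $\mathrm{d}z_j/\mathrm{d}u_j = 1/F_j'(F_j^{-1}(u_j)) = 1/H_j(z_j)$, and its $(s,s)$ entry is $\partial z_s/\partial u_s = H(z_1,\dots,z_{s-1})$; the entries below the diagonal in the last row (the derivatives $\partial z_s/\partial u_j = u_s\,\partial H/\partial z_j \cdot \mathrm{d}z_j/\mathrm{d}u_j$) do not affect the determinant. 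Hence
\begin{equation*}
\left|\det \frac{\partial\boldsymbol{z}}{\partial\boldsymbol{u}}\right| = H(z_1,\dots,z_{s-1})\prod_{j=1}^{s-1}\frac{1}{H_j(z_j)} = \frac{\prod_{j=1}^{s-1}H_j(z_j)}{\prod_{j=1}^{s-1}H_j(z_j)} = 1,
\end{equation*}
using the product structure $H = \prod_{j=1}^{s-1}H_j$. The change-of-variables formula then gives $\mathrm{Volume}(T(D)) = \int_{T(D)}\mathrm{d}\boldsymbol{z} = \int_D |\det(\partial\boldsymbol{z}/\partial\boldsymbol{u})|\,\mathrm{d}\boldsymbol{u} = \int_D \mathrm{d}\boldsymbol{u} = \mathrm{Volume}(D)$ for every measurable $D\subseteq[0,1]^s$, and applying the same argument to $T^{-1}$ (or simply noting $T$ is a measure-preserving bijection) yields the claim for $T^{-1}$.

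The main technical point to handle carefully is the possible non-smoothness or vanishing of the densities $H_j$: $F_j^{-1}$ need not be differentiable where $H_j$ has jumps or zeros, and $H$ itself may fail to be differentiable, so the clean Jacobian computation above is only valid on the open set where all $H_j$ are positive and sufficiently regular. I would dispose of this by restricting to that set, observing its complement (in $\boldsymbol{u}$-space and in $\boldsymbol{z}$-space) has measure zero and is mapped to a measure-zero set, and invoking the change-of-variables theorem in the form valid for bijective a.e.-differentiable maps (or, alternatively, a monotone-class / approximation argument reducing to boxes $\prod[0,t_j]$, for which the identity $\mathrm{Volume}(T(\prod[0,t_j]\times[0,t_s]))=\prod t_j\cdot t_s$ can be checked by iterated integration using $\int_0^{t_s}H\,\mathrm{d}u_s = t_s H$ and the substitution $z_j=F_j^{-1}(u_j)$ coordinatewise). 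Since the integral of any nonnegative integrand over a null set vanishes, this does not affect the volume identity, completing the proof.
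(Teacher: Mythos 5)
Your proposal is correct. Note, however, that the paper states this lemma without giving any proof (there is no corresponding argument in the appendix), so there is nothing to compare your write-up against; on its own merits your argument is sound. The triangular-Jacobian computation is the natural one: under the product assumption $H=\prod_{j=1}^{s-1}H_j$ the map decouples, the diagonal entries $1/H_j(z_j)$ and $H(z_1,\dots,z_{s-1})$ multiply to $1$, and the change-of-variables formula gives the claim. Your closing remark is the right place to put the weight: since $H$ need not be differentiable and the $H_j$ may vanish, the cleanest fully rigorous route is the one you sketch as an alternative, namely factoring $T$ as the coordinatewise inversion $u_j\mapsto F_j^{-1}(u_j)$ followed by the fiberwise dilation $u_s\mapsto u_sH(z_1,\dots,z_{s-1})$, verifying $\mathrm{Volume}(T(B))=\mathrm{Volume}(B)$ on boxes by Fubini and the substitution $\rd u_j=H_j(z_j)\,\rd z_j$, and extending to all measurable $D$ by uniqueness of measures; this avoids any smoothness hypothesis on $H$ and handles the null sets where some $H_j$ vanishes exactly as you indicate. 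The statement for $T^{-1}$ then follows since a measure-preserving bijection (modulo null sets) has a measure-preserving inverse.
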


To prove a bound on the discrepancy of the samples generated by Algorithm~\ref{algorithmRd},
the following assumption is needed.
\begin{assumption}\label{AssumptionGeneral} Let $\psi$ be the target density and ${H}$ be a product measure proposal density function, which is chosen such that its inverse CDF can be computed.
Let $A=\{\boldsymbol{z}\in\mathbb{R}^s:\psi(z_{1},\ldots,z_{s-1})\ge Lz_s H(z_{1},\ldots,z_{s-1})\}$ and the transformation $T^{-1}$  is defined as the inversion of transform $T$. Then we assume that $T^{-1}(A)$ is pseudo-convex.
\end{assumption}

 As the mappings $T$ and $T^{-1}$ are measure preserving, and since there are the same number of samples in an arbitrary subset $D\subseteq [0,1]^{s}$ and the corresponding subset $T(D)\subseteq \mathbb{R}^{s-1}\times \mathbb{R}_+$, we can consider the discrepancy in the unit cube instead of that in $\mathbb{R}^{s-1}\times \mathbb{R}_+$. Following by  similar proof arguments as for Theorem \ref{Maintheorem} and Theorem~\ref{lowerPsiN}, we obtain the same discrepancy bounds including an upper bound and a lower bound for the general density $\psi$ defined in the real state space $\mathbb{R}^{s-1}$.

\begin{theorem}\label{GneralMaintheorem}  Let the unnormalized target density  $\psi:\mathbb{R}^{s-1}\to \mathbb{R}_+$ and the proposal density $H:\mathbb{R}^{s-1}\to\mathbb{R}_+$ satisfy Assumption \ref{AssumptionGeneral}. Then the  discrepancy of the point set $P_N^{(s-1)}=\{\boldsymbol{y}_0, \boldsymbol{y}_1,\ldots, \boldsymbol{y}_{N-1}\} \subseteq \mathbb{R}^{s-1}$ generated by Algorithm \ref{algorithmRd} satisfies
\begin{equation*}
D_{N,\psi}^*(P_N^{(s-1)})
    \le 8LC^{-1} s b^{t/s}(2p-q) N^{-1/s},
\end{equation*}
for $N$ large enough, where $C=\int_{\mathbb{R}^{s-1}} \psi(\boldsymbol{z})d\boldsymbol{z}$ and $L$ is such that $\psi(\boldsymbol{x})\le LH(\boldsymbol{x})$ for all $\boldsymbol{x}\in \mathbb{R}^{s-1}$.
\end{theorem}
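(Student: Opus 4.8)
The plan is to pull the whole problem back to the unit cube $[0,1]^s$ through the inverse $T^{-1}$ of the transformation \eqref{InverseRosenblattTrans} and then repeat, essentially verbatim, the argument behind Theorem~\ref{Maintheorem}. The points produced in $\mathbb{R}^{s-1}\times\mathbb{R}_+$ are $P_N^{(s)}=A\cap T(Q_{m,s})$, so $T^{-1}(P_N^{(s)})=\widetilde{A}\cap Q_{m,s}$ with $\widetilde{A}:=T^{-1}(A)\subseteq[0,1]^s$; by Assumption~\ref{AssumptionGeneral}, $\widetilde{A}$ is pseudo-convex, say with an admissible convex covering of $p$ parts and $q$ convex parts, and in particular $N=\#(Q_{m,s}\cap\widetilde{A})$. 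Thus we are exactly in the setting of the proof of Theorem~\ref{Maintheorem}, provided we first check that passing through $T^{-1}$ is harmless for both the test sets and the volumes.

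First I would analyse the test sets. Fix $\boldsymbol{t}\in\mathbb{R}^{s-1}$; a projected accepted point $\boldsymbol{y}_n$ lies in $(-\boldsymbol{\infty},\boldsymbol{t}]$ precisely when $T(\boldsymbol{x}_n)\in A\cap\big((-\boldsymbol{\infty},\boldsymbol{t}]\times\mathbb{R}_+\big)$, i.e. when $\boldsymbol{x}_n\in\widetilde{A}\cap T^{-1}\big((-\boldsymbol{\infty},\boldsymbol{t}]\times\mathbb{R}_+\big)$. Because $H$ is a product measure, the conditional CDFs in \eqref{InverseRosenblattTrans} collapse to the increasing marginal CDFs $F_j$, so $z_j\le t_j\iff u_j\le F_j(t_j)$, while $z_s\ge0$ is automatic; hence $K_{\boldsymbol{t}}:=T^{-1}\big((-\boldsymbol{\infty},\boldsymbol{t}]\times\mathbb{R}_+\big)=\prod_{j=1}^{s-1}[0,F_j(t_j))\times[0,1)$ is an axis-parallel box, in particular convex. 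Intersecting the pseudo-convex set $\widetilde{A}$ with the convex box $K_{\boldsymbol{t}}$ leaves it pseudo-convex with the quantity $2p-q$ only decreasing (the same elementary observation already used inside the proof of Theorem~\ref{Maintheorem} when intersecting $A$ with a corner box), so Lemma~\ref{lemmaConvex} applies to $\widetilde{A}\cap K_{\boldsymbol{t}}$ with the constant $2p-q$.

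Next comes the counting estimate. Abbreviate $\varepsilon:=(2p-q)J_M(Q_{m,s})$, $a:=\#(Q_{m,s}\cap\widetilde{A}\cap K_{\boldsymbol{t}})=\#\{n:\boldsymbol{y}_n\in(-\boldsymbol{\infty},\boldsymbol{t}]\}$, $\alpha:=\lambda(\widetilde{A}\cap K_{\boldsymbol{t}})$ and $\beta:=\lambda(\widetilde{A})$. Applying Lemma~\ref{lemmaConvex} to $\widetilde{A}$ and to $\widetilde{A}\cap K_{\boldsymbol{t}}$ gives $|a/M-\alpha|\le\varepsilon$ and $|N/M-\beta|\le\varepsilon$. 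Since $T$ is measure preserving (the preceding Lemma) and $A$ is the region below the graph of $\psi/L$, integrating the $z_s$-slice gives $\beta=\mathrm{Vol}(A)=C/L$ and $\alpha=\mathrm{Vol}\big(A\cap((-\boldsymbol{\infty},\boldsymbol{t}]\times\mathbb{R}_+)\big)=\tfrac1L\int_{(-\boldsymbol{\infty},\boldsymbol{t}]}\psi(\boldsymbol{z})d\boldsymbol{z}$, so $\alpha/\beta=\tfrac1C\int_{(-\boldsymbol{\infty},\boldsymbol{t}]}\psi(\boldsymbol{z})d\boldsymbol{z}$. Writing $a=M(\alpha+\delta_1)$, $N=M(\beta+\delta_2)$ with $|\delta_i|\le\varepsilon$ and using $\alpha\le\beta$ gives $|a/N-\alpha/\beta|\le 2\varepsilon/(\beta-\varepsilon)$, which for $N$ large enough (so $\varepsilon\le\beta/2$, valid since $\varepsilon\to0$) is at most $4\varepsilon/\beta=4LC^{-1}(2p-q)J_M(Q_{m,s})$. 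Taking the supremum over $\boldsymbol{t}$, the left-hand side is $D_{N,\psi}^*(P_N^{(s-1)})$. Finally Lemma~\ref{IsoDisBound} gives $J_M(Q_{m,s})\le 2sb^{t/s}M^{-1/s}$, while step~i) of Algorithm~\ref{algorithmRd} together with $C=\int_{\mathbb{R}^{s-1}}\psi\le L\int_{\mathbb{R}^{s-1}}H=L$ forces $M\ge N$, hence $M^{-1/s}\le N^{-1/s}$; combining these bounds yields $D_{N,\psi}^*(P_N^{(s-1)})\le 8LC^{-1}sb^{t/s}(2p-q)N^{-1/s}$.

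The one genuinely new point compared with Theorem~\ref{Maintheorem}, and the step I expect to require the most care, is the dictionary supplied by $T^{-1}$: that the product structure of $H$ makes $T^{-1}$ send each one-sided test box $(-\boldsymbol{\infty},\boldsymbol{t}]\times\mathbb{R}_+$ to an axis-parallel box of the cube, and that $T^{-1}$ is volume preserving, so that the Lebesgue measure of $\widetilde{A}\cap K_{\boldsymbol{t}}$ is exactly $\tfrac1L\int_{(-\boldsymbol{\infty},\boldsymbol{t}]}\psi(\boldsymbol{z})d\boldsymbol{z}$. Once this is in place, every remaining estimate is literally that of Theorem~\ref{Maintheorem}, and the corresponding lower bound for densities on $\mathbb{R}^{s-1}$ transfers through the same device by running the construction of Theorem~\ref{lowerPsiN} on the pulled-back density.
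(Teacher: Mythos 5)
Your proposal is correct and follows essentially the same route as the paper, which itself only sketches this proof by noting that $T$ and $T^{-1}$ are measure preserving and then invoking the argument of Theorem~\ref{Maintheorem} on the pulled-back set $T^{-1}(A)$. You usefully fill in the details the paper leaves implicit -- that the product structure of $H$ sends each test set $(-\boldsymbol{\infty},\boldsymbol{t}]\times\mathbb{R}_+$ to an axis-parallel box $\prod_{j}[0,F_j(t_j))\times[0,1)$, that $\lambda(\widetilde{A}\cap K_{\boldsymbol{t}})=\frac{1}{L}\int_{(-\boldsymbol{\infty},\boldsymbol{t}]}\psi$, and that intersection with a convex box does not increase $2p-q$ -- and your slightly different bookkeeping ($2\varepsilon/(\beta-\varepsilon)\le 4\varepsilon/\beta$ in place of the paper's $\frac{M}{N}\cdot 2\varepsilon$ with $\frac{M}{N}\le\frac{2L}{C}$) recovers the same constant $8LC^{-1}sb^{t/s}(2p-q)$.
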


\begin{theorem}\label{lowerRealState} Let $H$ be a product density function defined on $\mathbb{R}^{s-1}$. Let $T$ be the transformation given in Equation~\eqref{InverseRosenblattTrans} associated  to $H$. Let $P_M$ be an arbitrary  point set in $[0,1]^s$, then $T(P_M)$ is a point set in $\mathbb{R}^{s-1}$. Then there exists an unnormalization density function $\psi$ defined in $\mathbb{R}^{s-1}$ satisfying the assumption in Theorem~\ref{GneralMaintheorem} such that  the star discrepancy of the points generated by the acceptance-rejection sampler with respect to $\psi$ and $H$ satisfies
\begin{equation*}
  D_{N,\psi}^*(P_N^{(s-1)})\ge c_sN^{-\frac{2}{s+1}},
\end{equation*}
where $c_s$ is independent of $N$ and $P_M$, but only dependent on $s$.
\end{theorem}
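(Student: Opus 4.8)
The plan is to transport the extremal density constructed in Theorem~\ref{lowerPsiN} from the unit cube to $\mathbb{R}^{s-1}$ along the Rosenblatt transformation $T$, exploiting that $T$ and $T^{-1}$ are measure preserving (the lemma preceding Assumption~\ref{AssumptionGeneral}) and that, since $H=\prod_{j=1}^{s-1}H_j$ is a product, $T$ acts coordinatewise on the first $s-1$ variables, $z_j=F_j^{-1}(u_j)$, where $F_j$ is the CDF of $H_j$. Given a bounded density $\chi$ on $[0,1]^{s-1}$, I would define $\psi\colon\mathbb{R}^{s-1}\to\mathbb{R}_+$ by
\begin{equation*}
\psi(\boldsymbol{z}):=\chi\bigl(F_1(z_1),\dots,F_{s-1}(z_{s-1})\bigr)\,H(\boldsymbol{z}).
\end{equation*}
The substitution $u_j=F_j(z_j)$, $\rd u_j=H_j(z_j)\,\rd z_j$, shows that $\psi(\boldsymbol{z})\,\rd\boldsymbol{z}$ is the pushforward of $\chi(\boldsymbol{u})\,\rd\boldsymbol{u}$ under $F^{-1}=(F_1^{-1},\dots,F_{s-1}^{-1})$; in particular $\int_{\mathbb{R}^{s-1}}\psi=\int_{[0,1]^{s-1}}\chi$ and $\psi(\boldsymbol{z})\le LH(\boldsymbol{z})$ with $L:=\sup\chi<\infty$, so $\psi$ is an admissible target for Algorithm~\ref{algorithmRd} with proposal $H$.

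The first step is to set up a dictionary between the run of Algorithm~\ref{algorithmRd} on $(P_M,\psi,H)$ and the unit-cube acceptance--rejection procedure run on $(P_M,\chi)$ (the procedure of Algorithm~\ref{algorithmAR} with its net replaced by the arbitrary set $P_M$, as in Theorem~\ref{lowerPsiN}). Because $T$ is a measure-preserving bijection of $[0,1]^s$ onto $\mathbb{R}^{s-1}\times\mathbb{R}_+$, a point $\boldsymbol{x}_n\in P_M$ satisfies $T(\boldsymbol{x}_n)\in A$ iff $\boldsymbol{x}_n\in T^{-1}(A)$; computing $T^{-1}(A)$ from the definition of $A$ in Assumption~\ref{AssumptionGeneral} and the form of the height coordinate of $T$, and using $\psi=\chi(F(\cdot))H$, one obtains
\begin{equation*}
T^{-1}(A)=\{\boldsymbol{u}\in[0,1]^s:\chi(u_1,\dots,u_{s-1})\ge L\,u_s\},
\end{equation*}
which is exactly the acceptance region of the cube algorithm with density $\chi$ and bound $L$. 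Hence both procedures accept the same indices, produce point sets of the same cardinality $N$, and the accepted points of Algorithm~\ref{algorithmRd}, projected onto the first $s-1$ coordinates, equal $\boldsymbol{w}_n=F^{-1}(\boldsymbol{y}_n)$, where $\boldsymbol{y}_0,\dots,\boldsymbol{y}_{N-1}$ are the corresponding accepted-and-projected points of the cube procedure. Since each $F_j$ is a continuous nondecreasing bijection onto an interval and $\psi\,\rd\boldsymbol{z}$ pushes forward to $\chi\,\rd\boldsymbol{u}$, for every $\boldsymbol{t}\in\mathbb{R}^{s-1}$ both the counting term $\frac1N\sum_n 1_{(-\boldsymbol{\infty},\boldsymbol{t}]}(\boldsymbol{w}_n)$ and the integral term $\frac1C\int_{(-\boldsymbol{\infty},\boldsymbol{t}]}\psi$ coincide with their cube counterparts at $\boldsymbol{\tau}=F(\boldsymbol{t})$; taking suprema (boundary values $\tau_j\in\{0,1\}$ contribute nothing new) yields the identity $D_{N,\psi}^*(\{\boldsymbol{w}_n\})=D_{N,\chi}^*(\{\boldsymbol{y}_n\})$.

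With the dictionary in hand the proof finishes in one line. Apply Theorem~\ref{lowerPsiN} to $P_M$: there is a concave density $g$ on $[0,1]^{s-1}$ for which the $N$ points produced by the cube acceptance--rejection algorithm on $(P_M,g)$ satisfy $D_{N,g}^*(\{\boldsymbol{y}_n\})\ge c_sN^{-2/(s+1)}$, with $c_s$ depending only on $s$. Take $\chi:=g$, so $\psi(\boldsymbol{z})=g(F_1(z_1),\dots,F_{s-1}(z_{s-1}))H(\boldsymbol{z})$; then $T^{-1}(A)$ is the hypograph of the concave function $g/L$, hence pseudo-convex (Remark~\ref{remark-Pseudoconvex}), so Assumption~\ref{AssumptionGeneral}, and thus the hypothesis of Theorem~\ref{GneralMaintheorem}, holds for $\psi$ and $H$, and
\begin{equation*}
D_{N,\psi}^*(P_N^{(s-1)})=D_{N,g}^*(\{\boldsymbol{y}_n\})\ge c_sN^{-2/(s+1)},
\end{equation*}
which is the assertion.

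The crux is the dictionary step, specifically the verification that the height coordinate of $T$ together with the defining inequality of $A$ collapses, under the choice $\psi=\chi(F(\cdot))H$, to the plain inequality $\chi(u_1,\dots,u_{s-1})\ge Lu_s$ defining the cube acceptance region; this is exactly where the coordinatewise action of $T$ on the first $s-1$ variables — hence the product hypothesis on $H$ — is essential, since it is what makes the test-box correspondence $(-\boldsymbol{\infty},\boldsymbol{t}]\leftrightarrow[\boldsymbol{0},F(\boldsymbol{t}))$ exact and thereby forces the two star discrepancies to be literally equal rather than merely comparable. The remaining points (boundedness of the concave $g$, behaviour at $\partial[0,1]^{s-1}$, generalized inverses when some $H_j$ vanishes, and matching the number $N$ of accepted points) are routine.
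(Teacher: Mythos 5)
Your proposal is correct and takes essentially the same route as the paper, which proves this theorem only by remarking that the measure-preserving Rosenblatt map $T$ lets one transfer the cube construction of Theorem~\ref{lowerPsiN} to $\mathbb{R}^{s-1}$; your write-up simply makes that transfer explicit via $\psi=\chi(F(\cdot))\,H$ and the box correspondence $(-\boldsymbol{\infty},\boldsymbol{t}]\leftrightarrow[\boldsymbol{0},F(\boldsymbol{t}))$. The only point worth noting is that your identity $T^{-1}(A)=\{\chi(u_1,\dots,u_{s-1})\ge Lu_s\}$ relies on reading the paper's definitions of $A$ and of the height coordinate $z_s=u_sH$ in the standard acceptance-rejection sense (the paper's two formulas are literally inconsistent by a factor of $H$), which is the sensible resolution.
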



\section{{Discrepancy properties of the deterministic reduced acceptance-rejection sampler}}\label{SecImprovedAlg}
Algorithm~\ref{Alg-RAR} can be extended to a  more general case.
Consider the target density $\psi(\boldsymbol{x})=\sum_{\ell=1}^{k} H_\ell(\boldsymbol{x}), \boldsymbol{x}\in D\subset \mathbb{R}^s$. If it is possible to  sample from $H_{\ell}(\boldsymbol{x})$ individually and the expectations of $H_{\ell}$  can be calculated or estimated with low cost, then we can use an embedding deterministic reduced acceptance-rejection sampler in each step.
{Let
\begin{align}\label{SjLj}
    &\mathcal{S}_{\ell}=\{\boldsymbol{x}\in D: \psi_{k-{\ell}+1}(\boldsymbol{x})< H_{\ell}(\boldsymbol{x})\},\nonumber \\
    \mbox{and}&\nonumber \\
    &\mathcal{L}_{\ell}=\{\boldsymbol{x}\in D: \psi_{k-{\ell}+1}(\boldsymbol{x})\ge H_{\ell}(\boldsymbol{x})\},\nonumber \\
\end{align} where $\psi_{k-{\ell}+1}(\boldsymbol{x})=\sum_{i={\ell}}^{k}H_i(\boldsymbol{x})$ for ${\ell}=1,\ldots,k-1$, and, in particular, $\psi_k$ is the target density.

Suppose we aim to sample $N$ points from the target density $\psi$. The sample set can be divided into two types, namely, points generated from the sets $\mathcal{S}_{\ell}$'s and $\mathcal{L}_{\ell}$'s respectively.
We apply a deterministic acceptance-rejection method given in Algorithm~\ref{algorithmRd} in each $\mathcal{S}_{\ell}$ with respect to $\psi_{k-{\ell}+1}$ and $H_{\ell}$. Note that we get  $\lceil N\int_{\mathcal{S}_{\ell}}\psi_{k-{\ell}+1}(\boldsymbol{x})d\boldsymbol{x}/\int_D\psi_k (\boldsymbol{x}) d\boldsymbol{x}\rceil$ points from $\mathcal{S}_{\ell}$ for ${\ell}=1,\ldots,k-1$.
For sampling from $\mathcal{L}_{\ell}$, the remaining samples come from applying the inverse transformation of $H_{\ell}$ in $\mathcal{L}_{\ell}$. Then we obtain additional   $\lceil N\int_{\mathcal{L}_{\ell}}H_{\ell}(\boldsymbol{x}) d\boldsymbol{x}/\int_D\psi_k(\boldsymbol{x}) d\boldsymbol{x}\rceil$ points from $\mathcal{L}_{\ell}$ for ${\ell}=1,\ldots,k$. We conduct the  procedure inductively until we get samples from $H_k(\boldsymbol{x})$. We assume that $\int_{\mathcal{S}_{\ell}}\psi_{k-{\ell}+1}(\boldsymbol{x})d\boldsymbol{x}/\int_D\psi_k (\boldsymbol{x}) d\boldsymbol{x}$ and $\int_{\mathcal{L}_{\ell}}H_{\ell}(\boldsymbol{x}) d\boldsymbol{x}/\int_D\psi_k(\boldsymbol{x}) d\boldsymbol{x}$ can be calculated or estimated.

The following algorithm is an extension of the  DRAR algorithm, which summarizes the embedding idea.
\begin{algorithm}\label{Alg-RAR-2} Let $\psi(\boldsymbol{x})=\sum_{{\ell}=1}^{k} H_{\ell}(\boldsymbol{x}), \boldsymbol{x}\in D\subset \mathbb{R}^{s-1}$, be a target density we aim to sample from. Define $\psi_{k-{\ell}+1}(\boldsymbol{x})=\sum_{i={\ell}}^{k}H_i(\boldsymbol{x})$ for $j=1,\ldots,k-1$. Denote $\mathcal{S}_{\ell}$ and $\mathcal{L}_{\ell}$ like in Equation~\eqref{SjLj} and assume that $\int_{\mathcal{S}_{\ell}}\psi_{k-{\ell}+1}(\boldsymbol{x})d\boldsymbol{x}/\int_D\psi (\boldsymbol{x}) d\boldsymbol{x}$ and $\int_{\mathcal{L}_{\ell}}H_{\ell}(\boldsymbol{x})d\boldsymbol{x}/\int_D\psi (\boldsymbol{x}) d\boldsymbol{x}$ can be calculated or estimated.
Further assume that we can sample from $H_{\ell}$ individually by applying the  transformation $T_{H_{\ell},\mathcal{S}_{\ell}}$ and $T_{H_{\ell},\mathcal{L}_{\ell}}$ given in Equation~\eqref{InverseRosenblattTrans} in $\mathcal{S}_{\ell}$ and $\mathcal{L}_{\ell}$ respectively. Suppose we aim to generate $N$ samples from $\psi$.  Let  $$N_{1,{\ell}}=\left\lceil N\frac{\int_{\mathcal{S}_{\ell}}\psi_{k-{\ell}+1}(\boldsymbol{x})d\boldsymbol{x}}{\int_D\psi (\boldsymbol{x}) d\boldsymbol{x} }\right\rceil \mbox{ and } N_{2,j}=\left\lceil N\frac{\int_{\mathcal{L}_{\ell}}H_{\ell}(\boldsymbol{x})d\boldsymbol{x}}{\int_D\psi (\boldsymbol{x}) d\boldsymbol{x}} \right\rceil.$$
For  ${\ell}$ from 1 to $k$ do:
\begin{itemize}
 \item [i)~]Let $\{\boldsymbol{x}_0,\boldsymbol{x}_1, \boldsymbol{x}_2,\ldots\}\subset [0,1]^s$ be a $(t,s)$-sequence in base $b$.

 %

 \item [ii)~] Compute $T_{H_{\ell},\mathcal{S}_{\ell}}(\boldsymbol{x}_n)$ for $n=0,1,2,\ldots$. Use the acceptance-rejection method with respect to  $\psi_{k-{\ell}+1}$ and  $H_{\ell}$ on the domain $\mathcal{S}_{\ell}$ using
 $\{\boldsymbol{x}_0,\boldsymbol{x}_1, \ldots,\boldsymbol{x}_{M-1}\}$ as driver sequence. Choose $M$ such that $N_{1,{\ell}}$ points are accepted by the DAR algorithm. Let $R_{1,{\ell}}=\{\boldsymbol{z}_0,\boldsymbol{z}_1, \ldots,\boldsymbol{z}_{N_{1,{\ell}}-1}\}$ be the accepted points.

 \item [iii)~]  Compute $T_{H_{\ell},\mathcal{L}_{\ell}}(\boldsymbol{x}_n)$ for $ n=0,1,\ldots,N_{2,{\ell}}-1$. Let $R_{2,{\ell}}=\{T_{H_{\ell},\mathcal{L}_{\ell}}(\boldsymbol{x}_n): 0\le n<N_{2,{\ell}}\}$.

 \end{itemize}
 Let $R_N^{(s)}=\bigcup_{{\ell}=1}^{k} (R_{1,{\ell}}\cup R_{2,{\ell}})$ and let $R_N^{(s-1)}$ denote the projection of $R_N^{(s)}$ onto the first $s-1$ coordinates. Return the set $R_N^{(s-1)}$.
\end{algorithm}
Now we consider the discrepancy properties of sample points produced by this algorithm.  Note that the sample set of $\psi=\sum_{{\ell}=1}^{k}H_{\ell}$ can be decomposed into several subsets with different star discrepancy.
\begin{theorem}\label{triangleDisc}
For a given target density $\psi(\boldsymbol{x})=\sum_{{\ell}=1}^{k} H_{\ell}(\boldsymbol{x}), \boldsymbol{x}\in D\subset \mathbb{R}^{s-1}$, let $\psi_{k-{\ell}+1}(\boldsymbol{x})=\sum_{i={\ell}}^{k}H_i(\boldsymbol{x})$.  Let $\mathcal{S}_{\ell}$ and $\mathcal{L}_{\ell}$ be given by \eqref{SjLj}.
Let $R^{(s-1)}_N$ be the sample set generated by Algorithm~\ref{Alg-RAR-2}, where $$N_{1,{\ell}}=\left\lceil N\int_{\mathcal{S}_{\ell}}\psi_{k-{\ell}+1}(\boldsymbol{x})d\boldsymbol{x}/\int_D\psi_k (\boldsymbol{x}) d\boldsymbol{x}\right\rceil,$$ which is the number of points generated from $\mathcal{S}_{\ell}$, and
$$N_{2,{\ell}}=\left\lceil N\int_{\mathcal{L}_{\ell}}H_{\ell}(\boldsymbol{x}) d\boldsymbol{x}/\int_D\psi_k(\boldsymbol{x}) d\boldsymbol{x}\right\rceil,$$ which is the number of points generated from $\mathcal{L}_{\ell}$ for ${\ell}=1,\ldots,k$. Assume that $N_{1,{\ell}}$ and $N_{2,{\ell}}$ can be calculated or estimated for the  given target density $\psi$ and $N$. Then we have
\begin{equation*}
    D_{N,\psi}^*(R^{(s-1)}_N)\le \sum_{{\ell}=1}^{k-1}\frac{N_{1,{\ell}}}{N}D^*_{\mathcal{S}_{\ell},\psi_{k-{\ell}+1}} +\sum_{{\ell}=1}^{k}\frac{N_{2,{\ell}}}{N}D^*_{\mathcal{L}_{\ell},H_{\ell}}+\frac{1}{N},
\end{equation*} where $D^*_{\mathcal{S}_{\ell},\psi_{k-{\ell}+1}}$ and $D^*_{\mathcal{L}_{\ell},H_{\ell}}$ is  the discrepancy of the samples in $\mathcal{S}_{\ell}$ and $\mathcal{L}_{\ell}$ respectively.
\end{theorem}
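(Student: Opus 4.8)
The plan is to obtain the bound by decomposing the empirical distribution of the returned set $R_N^{(s-1)}$ into the contributions of the pieces built in Algorithm~\ref{Alg-RAR-2} --- namely $R_{1,\ell}$ for $\ell=1,\dots,k-1$ (recall $\mathcal{S}_k=\emptyset$, so $R_{1,k}$ is empty) and $R_{2,\ell}$ for $\ell=1,\dots,k$ --- and then applying the triangle inequality. Fix a test set $B=(-\boldsymbol{\infty},\boldsymbol{t}\,]$ and put $C=\int_D\psi(\boldsymbol{z})\,d\boldsymbol{z}$. Since, as a multiset, $R_N^{(s-1)}$ is the union of the projections of the $R_{1,\ell}$ and the $R_{2,\ell}$, the empirical counting functional at $B$ splits as
\begin{equation*}
\frac1N\sum_{\boldsymbol{y}\in R_N^{(s-1)}}1_B(\boldsymbol{y})=\sum_{\ell=1}^{k-1}\frac{N_{1,\ell}}{N}\Bigl(\frac{1}{N_{1,\ell}}\sum_{\boldsymbol{y}\in R_{1,\ell}}1_B(\boldsymbol{y})\Bigr)+\sum_{\ell=1}^{k}\frac{N_{2,\ell}}{N}\Bigl(\frac{1}{N_{2,\ell}}\sum_{\boldsymbol{y}\in R_{2,\ell}}1_B(\boldsymbol{y})\Bigr).
\end{equation*}

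Next I would feed in the definitions of the local discrepancies. By construction $R_{1,\ell}$ is generated by the deterministic acceptance--rejection sampler of Algorithm~\ref{algorithmRd} with target $\psi_{k-\ell+1}$ and proposal $H_\ell$ on $\mathcal{S}_\ell$, while $R_{2,\ell}$ is generated by inverting the CDF of $H_\ell$ on $\mathcal{L}_\ell$; hence, writing $C_{1,\ell}=\int_{\mathcal{S}_\ell}\psi_{k-\ell+1}(\boldsymbol{z})\,d\boldsymbol{z}$ and $C_{2,\ell}=\int_{\mathcal{L}_\ell}H_\ell(\boldsymbol{z})\,d\boldsymbol{z}$, the very definition of $D^*_{\mathcal{S}_\ell,\psi_{k-\ell+1}}$ gives
\begin{equation*}
\Bigl|\frac{1}{N_{1,\ell}}\sum_{\boldsymbol{y}\in R_{1,\ell}}1_B(\boldsymbol{y})-\frac{1}{C_{1,\ell}}\int_{B\cap\mathcal{S}_\ell}\psi_{k-\ell+1}\Bigr|\le D^*_{\mathcal{S}_\ell,\psi_{k-\ell+1}},
\end{equation*}
and analogously $\bigl|\tfrac{1}{N_{2,\ell}}\sum_{\boldsymbol{y}\in R_{2,\ell}}1_B(\boldsymbol{y})-\tfrac{1}{C_{2,\ell}}\int_{B\cap\mathcal{L}_\ell}H_\ell\bigr|\le D^*_{\mathcal{L}_\ell,H_\ell}$. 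Inserting these into the previous display and using the triangle inequality, the theorem reduces to showing that the main term
\begin{equation*}
\Sigma(B):=\sum_{\ell=1}^{k-1}\frac{N_{1,\ell}}{N\,C_{1,\ell}}\int_{B\cap\mathcal{S}_\ell}\psi_{k-\ell+1}+\sum_{\ell=1}^{k}\frac{N_{2,\ell}}{N\,C_{2,\ell}}\int_{B\cap\mathcal{L}_\ell}H_\ell
\end{equation*}
differs from $\tfrac1C\int_B\psi$ by at most $\tfrac1N$, uniformly in $\boldsymbol{t}$.

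For this last point I would use the reassembly property built into the algorithm: because $\psi_{k-\ell+1}=H_\ell+\psi_{k-\ell}$ and the region $\mathcal{S}_\ell$ on which $R_{1,\ell}$ is sampled is complemented by $\mathcal{L}_\ell$, the restricted densities telescope, so that $\sum_{\ell=1}^{k-1}\int_{B\cap\mathcal{S}_\ell}\psi_{k-\ell+1}+\sum_{\ell=1}^{k}\int_{B\cap\mathcal{L}_\ell}H_\ell=\int_B\psi$ for every $B$, and, taking $B=D$, $\sum_\ell C_{1,\ell}+\sum_\ell C_{2,\ell}=C$. If $N_{1,\ell}=NC_{1,\ell}/C$ and $N_{2,\ell}=NC_{2,\ell}/C$ were integers, this would yield $\Sigma(B)=\tfrac1C\int_B\psi$ exactly; the ceilings in the definitions of $N_{1,\ell}$ and $N_{2,\ell}$ add non-negative perturbations which, once the $2k-1$ pieces are enumerated as a cumulative partition (so that partial sums of the $N_{i,\ell}$ are ceilings of cumulative mass fractions and the total is $N$), collapse to the single residual $\tfrac1N$. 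I expect this rounding bookkeeping --- controlling that the individual ceiling errors do not simply accumulate --- to be the main obstacle, whereas the telescoping identity and the passage to the local discrepancies are routine. Taking the supremum over $\boldsymbol{t}\in\mathbb{R}^{s-1}$ in the resulting estimate then gives the asserted bound for $D^*_{N,\psi}(R_N^{(s-1)})$.
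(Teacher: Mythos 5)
Your skeleton --- split the empirical measure over the pieces $R_{1,\ell}$ and $R_{2,\ell}$, invoke the local discrepancies $D^*_{\mathcal{S}_\ell,\psi_{k-\ell+1}}$ and $D^*_{\mathcal{L}_\ell,H_\ell}$, and use the telescoping identity $\sum_{\ell}\int_{B\cap\mathcal{S}_\ell}\psi_{k-\ell+1}+\sum_{\ell}\int_{B\cap\mathcal{L}_\ell}H_\ell=\int_B\psi$ --- is exactly the paper's route (the paper writes it out for $k=2$ and notes the general case is analogous). The one step you defer, however, is left genuinely open, and your proposed repair does not match the theorem. The paper does not try to show that your weighted sum $\Sigma(B)$ equals $\tfrac1C\int_B\psi$ up to a single residual via a cumulative-ceiling partition; the $N_{i,\ell}$ in the statement are each defined as the ceiling of their \emph{own} mass fraction, so redefining them as differences of ceilings of cumulative fractions proves a different theorem. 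Instead the paper writes $N_{i,\ell}=N\,C_{i,\ell}/C+\delta_{i,\ell}$ with $\delta_{i,\ell}\in[0,1)$, so that
\begin{equation*}
\frac{N_{i,\ell}}{N}\left|\frac{1}{N_{i,\ell}}\sum_{\boldsymbol{y}\in R_{i,\ell}}1_B(\boldsymbol{y})-\frac{N}{N_{i,\ell}}\cdot\frac{C_{i,\ell}}{C}\cdot\frac{\int_{B\cap\cdot}(\cdot)}{C_{i,\ell}}\right|
\le \frac{N_{i,\ell}}{N}\,D^*_{i,\ell}+\frac{\delta_{i,\ell}}{N}\cdot\frac{\int_{B\cap\cdot}(\cdot)}{C_{i,\ell}},
\end{equation*}
i.e.\ each piece's rounding error is absorbed locally as $\tfrac{\delta_{i,\ell}}{N}$ times a ratio bounded by $1$, and the sum of these terms is then collected into the additive $\tfrac1N$.

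Your instinct that "the individual ceiling errors do not simply accumulate" is the right thing to worry about, but you neither resolve it nor can you, as stated: summing the $2k-1$ local error terms $\tfrac{\delta_{i,\ell}}{N}\cdot\frac{\int_{B\cap\cdot}(\cdot)}{C_{i,\ell}}$ crudely gives $(2k-1)/N$, and getting down to exactly $1/N$ requires the additional observation that each ratio $\int_{B\cap\cdot}(\cdot)/C_{i,\ell}$ should be compared against the corresponding \emph{global} mass fraction (the paper bounds the total by $\tfrac1N\int_B\psi/\int_D\psi$, which is itself the loosest step of its argument). So the gap in your writeup is concrete: the claim $|\Sigma(B)-\tfrac1C\int_B\psi|\le\tfrac1N$ is the entire content of the $+\tfrac1N$ term, you flag it as the main obstacle, and the mechanism you sketch for closing it (cumulative ceilings summing to $N$) is incompatible with the $N_{1,\ell}$, $N_{2,\ell}$ defined in the statement. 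To finish along the paper's lines, replace that paragraph by the explicit $\delta_{i,\ell}$ decomposition above and track the resulting $O(1/N)$ remainder termwise.
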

The proof of Theorem~\ref{triangleDisc} is given in Appendix~\ref{SumCaseDiscrepancy}. Note that this  method  achieves an improved acceptance rate of points since we are only  rejecting points in a certain range. For the remaining domain, we  get samples by applying the inverse transform. To be more exact, all point sets from $\mathcal{L}_{\ell}$ have low discrepancy since the inverse transformation is directly applied with respect to $H_{\ell}$ for ${\ell}=1,\ldots,k$. Now we consider the star discrepancy of points generated from $\mathcal{S}_{\ell}$.

The following result from \cite{Kr06} gives an improved upper bound on the star discrepancy on the first $M$ terms of a $(t,s)$-sequence in  base $b$ with $s\ge 2$.
\begin{lemma}\label{ImprovedMsequence}The star discrepancy of the first $M$ terms of a $(t,s)$-sequence $P_M$ in  base $b$ with $s\ge 2$ satisfies
\begin{equation*}
D_{M}^*(P_M) \le M^{-1} b^{t} (\log M)^{s} \frac{b^s(b-1)}{(b+1) 2^{s+1} (s)! (\log b)^{s}} + C_s M^{-1} b^{t} (\log M)^{s-1},
\end{equation*}
for some constant $C_s > 0$ only depending on $s$.
\end{lemma}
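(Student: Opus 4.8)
The plan is to reduce the star discrepancy of the first $M$ terms of the $(t,s)$-sequence to the star discrepancy of $(t,m,s)$-nets, for which the sharp estimate is already available (the net bound from \cite{Kr06} quoted earlier in this section), and then to sum the net bounds against the base-$b$ digits of $M$. Write the local discrepancy
\[
\Delta(\boldsymbol{t})=\sum_{n=0}^{M-1}1_{[\boldsymbol{0},\boldsymbol{t})}(\boldsymbol{x}_n)-M\lambda([\boldsymbol{0},\boldsymbol{t})),
\]
so that $M\,D_M^*(P_M)=\sup_{\boldsymbol{t}\in[0,1]^s}|\Delta(\boldsymbol{t})|$. The key structural observation is that $\Delta$ is additive over any partition of the index set $\{0,\ldots,M-1\}$ into consecutive blocks; hence, if every block is (a translate of) a net, one may bound $|\Delta(\boldsymbol{t})|$ blockwise by the triangle inequality and take the supremum term by term.

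First I would carry out the digit (odometer) decomposition. Expanding $M=\sum_{i=0}^{m-1}a_i b^i$ in base $b$ with $0\le a_i<b$ and $m=\lceil\log_b M\rceil$, one splits $\{0,\ldots,M-1\}$ so that the first $a_{m-1}$ blocks have length $b^{m-1}$ and are aligned to multiples of $b^{m-1}$, the next $a_{m-2}$ blocks have length $b^{m-2}$ aligned to multiples of $b^{m-2}$, and so on. By Definition~\ref{DeftsSequence}, each aligned block of length $b^i$ with $i\ge t$ is a $(t,i,s)$-net in base $b$; the at most $t$ residual blocks with $i<t$ contain fewer than $b^t$ points in total and feed only the lower-order term. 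Thus, by additivity of $\Delta$ and the net bound of \cite{Kr06},
\[
M\,D_M^*(P_M)\le\sum_{i=t}^{m-1}a_i\,b^i D_{b^i}^*(\mathrm{net}_i)+O(b^t)\le \frac{b^{s}b^{t}}{(b+1)2^{s}(s-1)!}\sum_{i=0}^{m-1}a_i\,i^{s-1}+C_s'\,b^{t}\sum_{i=0}^{m-1}i^{s-2}.
\]

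Next I would perform the summation. Using $a_i\le b-1$ and $\sum_{i=0}^{m-1}i^{s-1}\le m^{s}/s$ yields the leading contribution $\tfrac{b^{s}(b-1)b^{t}}{(b+1)2^{s}s!}\,m^{s}$, and substituting $m\le \log M/\log b+O(1)$ converts $m^s$ into $(\log M)^s/(\log b)^s$ up to a term of order $(\log M)^{s-1}$; after dividing by $M$ this error, together with the second sum, is absorbed into $C_sM^{-1}b^t(\log M)^{s-1}$. This already produces a bound of exactly the stated shape: the factor $1/s!$ arises from $\sum i^{s-1}\approx m^s/s$ combined with the $1/(s-1)!$ in the net constant, while $b^s$, $b-1$, $b+1$ and $(\log b)^{-s}$ are inherited coordinatewise from the net estimate and from the base change $m\leftrightarrow\log M$.

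The main obstacle is sharpening the constant from $2^{s}$ to the claimed $2^{s+1}$. The blockwise triangle inequality is lossy by a factor of $2$ because it maximizes each net's discrepancy at its own worst anchor $\boldsymbol{t}$ independently, discarding cancellation between different resolutions. Recovering the extra $\tfrac12$ is the technical heart of the argument in \cite{Kr06}: rather than the crude blockwise split, one decomposes the anchored box $[\boldsymbol{0},\boldsymbol{t})$ directly into $b$-adic elementary intervals indexed by the base-$b$ digits of $t_1,\ldots,t_s$, observes that every elementary interval of order at most $m-t$ is fair (Definition~\ref{DeftmsNet}) and contributes nothing, and estimates the remaining boundary intervals simultaneously over all resolution vectors $(k_1,\ldots,k_s)$ with $\sum_j k_j$ near $m$. \emph{Averaging} the boundary contribution over the admissible digit values, instead of inserting the worst case $b-1$ at every level, produces the sharp per-coordinate constant and the improved power $2^{s+1}$, while the count of resolution vectors reproduces the simplex volume $m^s/s!$. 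I expect the delicate bookkeeping to lie entirely in tracking these boundary digit averages while confining every remaining contribution to the $(\log M)^{s-1}$ term.
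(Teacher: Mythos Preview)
The paper does not prove this lemma at all: it is quoted verbatim as a result of Kritzer~\cite{Kr06}, introduced by the sentence ``The following result from~\cite{Kr06} gives an improved upper bound\ldots''. So there is nothing to compare your argument against in the paper itself.

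That said, your sketch is a reasonable outline of how the result in~\cite{Kr06} is actually obtained: the base-$b$ digit decomposition of $M$ into aligned blocks that are $(t,i,s)$-nets, the blockwise additivity of the local discrepancy, and then the careful counting that converts the net constant $1/(s-1)!$ into $1/s!$ and recovers the extra factor of $2$ in the denominator. Your identification of the ``obstacle'' is accurate---the naive triangle inequality over blocks loses a factor of~$2$, and the sharpening to $2^{s+1}$ is precisely the technical content of Kritzer's paper, coming from a more refined treatment of the boundary elementary intervals rather than a crude worst-case bound at each level. If you were asked to supply a self-contained proof, the right move would be to cite~\cite{Kr06} (or the exposition in~\cite[Chapter~5]{Josef}) for that sharpening rather than reproduce it, since it is lengthy and not the point of the present paper.
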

With the help of Lemma~\ref{StarIsotropic}, we obtain a bound on the isotropic discrepancy of the first $M$ points of a $(t,s)$-sequence.
\begin{lemma}Let the point set $P_M=\{\boldsymbol{x}_0,\boldsymbol{x}_1,\ldots,\boldsymbol{x}_{M-1}\}\subset[0,1]^s$ be the first $M$ terms of a $(t,s)$-sequence. For the isotropic discrepancy of $P_{M}$ we have
\begin{equation*}
  J_M(P_M)\le 2 s \left(\frac{4s}{s-1}\right)^{(s-1)/s} \Big( \frac{b^{t/s}  \big(\frac{b^s(b-1)}{(b+1) 2^{s+1} (s)! (\log b)^{s}} \big)^{1/s}\log M}{M^{1/s} } + \frac{C'_s  b^{t/s} (\log M)^{(s-1)/s}}{M^{1/s}} \Big),
\end{equation*} for some constant $C'_s>0$ depending only on $s$.
\end{lemma}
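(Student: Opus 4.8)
The plan is to derive the bound by directly combining Lemma~\ref{ImprovedMsequence} with Lemma~\ref{StarIsotropic}. First I would apply Lemma~\ref{StarIsotropic} to the point set $P_M$, which gives
\begin{equation*}
J_M(P_M) \le 2 s \left(\frac{4s}{s-1}\right)^{(s-1)/s} \left( D_M^*(P_M) \right)^{1/s}.
\end{equation*}
Next I would insert into the right-hand side the upper bound on $D_M^*(P_M)$ supplied by Lemma~\ref{ImprovedMsequence}, so that $\left(D_M^*(P_M)\right)^{1/s}$ is bounded above by $(u+v)^{1/s}$, where
\begin{equation*}
u = M^{-1} b^{t} (\log M)^{s}\,\frac{b^s(b-1)}{(b+1) 2^{s+1} (s)! (\log b)^{s}}, \qquad v = C_s M^{-1} b^{t} (\log M)^{s-1},
\end{equation*}
and $C_s > 0$ is the constant from Lemma~\ref{ImprovedMsequence}.

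The only elementary fact needed is the subadditivity of the concave power map $x\mapsto x^{\theta}$ for $\theta=1/s\in(0,1]$ (recall $s\ge 2$), namely $(u+v)^{\theta}\le u^{\theta}+v^{\theta}$ for all $u,v\ge 0$. Applying this and then simplifying the exponents,
\begin{equation*}
u^{1/s} = M^{-1/s} b^{t/s} (\log M)\Big(\tfrac{b^s(b-1)}{(b+1) 2^{s+1} (s)! (\log b)^{s}}\Big)^{1/s}, \qquad v^{1/s} = C_s^{1/s} M^{-1/s} b^{t/s} (\log M)^{(s-1)/s},
\end{equation*}
one obtains exactly the claimed inequality with $C'_s := C_s^{1/s}$, which depends only on $s$ since $C_s$ does.

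There is essentially no serious obstacle here: the argument is a one-line substitution followed by the standard inequality $(u+v)^{1/s}\le u^{1/s}+v^{1/s}$. The only point worth noting is that this subadditivity holds unconditionally for nonnegative reals, so no lower bound on $M$ (such as $\log M\ge 1$) is needed for the displayed estimate to be valid; of course the bound is only informative once $M$ is large enough that the right-hand side drops below $1$.
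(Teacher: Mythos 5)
Your proposal is correct and is exactly the argument the paper intends: the lemma is stated immediately after Lemma~\ref{ImprovedMsequence} with the remark that it follows ``with the help of Lemma~\ref{StarIsotropic},'' i.e.\ by substituting Kritzer's star-discrepancy bound into $J_M(P_M)\le 2s\left(\frac{4s}{s-1}\right)^{(s-1)/s}(D_M^*(P_M))^{1/s}$ and using $(u+v)^{1/s}\le u^{1/s}+v^{1/s}$. Your bookkeeping of the exponents and the identification $C'_s=C_s^{1/s}$ match the stated constant, so there is nothing to add.
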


Hence, for the star discrepancy of $R_{1,{\ell}}$ for $1\le {\ell}< k$, using a $(t,s)$-sequence as a diver sequence in the DAR algorithm we have  a convergence rate of order $N_{1,{\ell}}^{-1/s}\log N_{1,{\ell}}$. We omit  a detailed proof since similar arguments as for proving Theorem~\ref{Maintheorem} can be used. The following corollary holds by substituting the proper upper bounds and $N_{1,{\ell}}, N_{2,{\ell}}$ in terms of $N$.
\begin{corollary} Suppose that  the target density $\psi(\boldsymbol{x})=\sum_{{\ell}=1}^{k} H_{\ell}(\boldsymbol{x}), \boldsymbol{x}\in D\subset \mathbb{R}^{s-1}$ satisfies all assumptions stated in  Theorem~\ref{triangleDisc}. Let $R^{(s-1)}_N$ be the sample set generated by Algorithm~\ref{Alg-RAR-2}. Then  we have
\begin{equation*}
   D_{N,\psi}^*(R^{(s-1)}_N)\le \sum_{j=1}^{k-1}\frac{C_{\mathcal{S}_{\ell},\psi_{k-{\ell}+1}}\alpha_j^{1-1/s}\log(\alpha_{\ell}N) }{N^{1/s}} +\sum_{{\ell}=1}^{k}\frac{C_{\mathcal{L}_{\ell},H_{\ell}}(\log\beta_{\ell}N)^{s-1}}{N}+\frac{1}{N},
\end{equation*} where
 $$\alpha_{\ell}=\frac{\int_{\mathcal{S}_{\ell}}\psi_{k-{\ell}+1}(\boldsymbol{x}) d\boldsymbol{x}}{\int_{D}\psi(\boldsymbol{x}) d\boldsymbol{x}}
  \quad \mbox{and} \quad \beta_{\ell}=\frac{\int_{\mathcal{L}_{\ell}}H_{\ell}(\boldsymbol{x}) d\boldsymbol{x}}{\int_{D}\psi(\boldsymbol{x}) d\boldsymbol{x}},$$
  and $C_{\mathcal{S}_{\ell},\psi_{k-{\ell}+1}}$ and $C_{\mathcal{L}_{\ell},H_{\ell}}$ are constants associated with $\mathcal{S}_{\ell},\psi_{k-{\ell}+1}$ and $\mathcal{L}_{\ell},H_{\ell}$ respectively.
\end{corollary}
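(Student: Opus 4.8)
The plan is to start from the decomposition already established in Theorem~\ref{triangleDisc},
\begin{equation*}
D_{N,\psi}^*(R^{(s-1)}_N)\le \sum_{\ell=1}^{k-1}\frac{N_{1,\ell}}{N}D^*_{\mathcal{S}_{\ell},\psi_{k-\ell+1}}+\sum_{\ell=1}^{k}\frac{N_{2,\ell}}{N}D^*_{\mathcal{L}_{\ell},H_{\ell}}+\frac{1}{N},
\end{equation*}
and to bound each of the two families of local discrepancies separately, inserting the explicit sizes $N_{1,\ell}=\lceil\alpha_\ell N\rceil$ and $N_{2,\ell}=\lceil\beta_\ell N\rceil$ only at the very end so that the ceilings are absorbed into the constants.

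For the blocks coming from the sets $\mathcal{L}_{\ell}$, the point set $R_{2,\ell}$ is obtained by applying the inverse (Rosenblatt) transformation $T_{H_\ell,\mathcal{L}_\ell}$ associated with $H_\ell$ to the first $N_{2,\ell}$ terms of a $(t,s)$-sequence. Since this transformation is measure preserving (by the measure-preservation lemma for $T$ and $T^{-1}$), the discrepancy $D^*_{\mathcal{L}_{\ell},H_{\ell}}$ of $R_{2,\ell}$ with respect to $H_\ell$ coincides with the ordinary star discrepancy of those $N_{2,\ell}$ cube points, to which Lemma~\ref{ImprovedMsequence} applies and gives a bound of order $N_{2,\ell}^{-1}(\log N_{2,\ell})^{s-1}$ (the lower-order term being absorbed). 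Multiplying by $N_{2,\ell}/N$ and using $N_{2,\ell}\le\beta_\ell N+1$ yields a bound of the form $C_{\mathcal{L}_{\ell},H_{\ell}}N^{-1}(\log\beta_\ell N)^{s-1}$.

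For the blocks coming from the sets $\mathcal{S}_{\ell}$ with $1\le\ell<k$, the point set $R_{1,\ell}$ is produced by the DAR algorithm (Algorithm~\ref{algorithmRd}) applied to the pair $\psi_{k-\ell+1},H_\ell$ restricted to $\mathcal{S}_\ell$, with a $(t,s)$-sequence as driver. One reruns the argument of Theorem~\ref{Maintheorem} / Theorem~\ref{GneralMaintheorem}: pull the discrepancy of $R_{1,\ell}$ back to the unit cube via the measure-preserving transformation, bound it by Lemma~\ref{lemmaConvex} in terms of the isotropic discrepancy of the first $M$ terms of the $(t,s)$-sequence, where $M$ is chosen so that exactly $N_{1,\ell}$ points are accepted, and observe that $M=\Theta(N_{1,\ell})$ because the acceptance rate is a fixed positive constant. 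The only change from Theorem~\ref{Maintheorem} is that one now uses the sharper isotropic-discrepancy estimate for $(t,s)$-sequences obtained above from Lemma~\ref{StarIsotropic} together with Lemma~\ref{ImprovedMsequence}, in place of the $(t,m,s)$-net bound of Lemma~\ref{IsoDisBound}; this produces $D^*_{\mathcal{S}_{\ell},\psi_{k-\ell+1}}\le C_{\mathcal{S}_{\ell},\psi_{k-\ell+1}}N_{1,\ell}^{-1/s}\log N_{1,\ell}$. Multiplying by $N_{1,\ell}/N$ and substituting $N_{1,\ell}\le\alpha_\ell N+1$ gives a term of order $\alpha_\ell^{1-1/s}N^{-1/s}\log(\alpha_\ell N)$.

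Adding the two sums together with the residual $1/N$ from Theorem~\ref{triangleDisc} gives the claimed inequality. The one genuinely non-routine point is the per-block rate $N_{1,\ell}^{-1/s}\log N_{1,\ell}$ for $R_{1,\ell}$: it requires that the region controlling this block --- the pre-image under $T_{H_\ell,\mathcal{S}_\ell}$ of the acceptance set for the pair $(\psi_{k-\ell+1},H_\ell)$ intersected with $\mathcal{S}_\ell$ --- be pseudo-convex, which is precisely the hypothesis inherited from Theorem~\ref{triangleDisc}, and that $\log M$ and $\log N_{1,\ell}$ be interchangeable up to constants, which follows from $M=\Theta(N_{1,\ell})$. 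Everything else is bookkeeping with the ceilings, which affects only the constants $C_{\mathcal{S}_{\ell},\psi_{k-\ell+1}}$ and $C_{\mathcal{L}_{\ell},H_{\ell}}$.
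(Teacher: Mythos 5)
Your proposal is correct and follows essentially the same route as the paper: the authors likewise start from Theorem~\ref{triangleDisc}, bound $D^*_{\mathcal{L}_{\ell},H_{\ell}}$ via the measure-preserving inversion and Lemma~\ref{ImprovedMsequence}, bound $D^*_{\mathcal{S}_{\ell},\psi_{k-\ell+1}}$ by rerunning the Theorem~\ref{Maintheorem} argument with the sharper isotropic-discrepancy estimate for $(t,s)$-sequences to get the rate $N_{1,\ell}^{-1/s}\log N_{1,\ell}$, and then substitute $N_{1,\ell}\approx\alpha_\ell N$ and $N_{2,\ell}\approx\beta_\ell N$. In fact your write-up is more explicit than the paper's, which compresses all of this into the remark that the corollary ``holds by substituting the proper upper bounds.''
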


\section{Conclusion and outlook}\label{Conclusion}
As is well known, the integration error using a Monte Carlo method converges at the rate of $N^{-1/2}$. The acceptance-rejection sampler with
 a deterministic driver sequence, which is a simple class of MCQMC methods,  performs much better in our numerical
 experiments than the theoretical result $N^{-1/s}$ of Theorem~\ref{Maintheorem} or Theorem~\ref{GneralMaintheorem},
  for a density defined on $[0,1]^{s-1}$ or $\mathbb{R}^{s-1}$, would imply. The three examples even demonstrate that it is
   possible to achieve a better convergence rate than  with standard  Monte Carlo using a well-chosen deterministic driver sequence.

The main drawback of the acceptance-rejection sampler is that it might reject many points in high dimension.
Some methods to improve the  acceptance rate of points are included in \cite{HLD04}. For a special class of density functions given by a finite sum, we propose an embedding deterministic reduced acceptance-rejection algorithm. This algorithm  produces a better star discrepancy convergence rate in our numerical example.


\newpage

\section*{Supplement}

\begin{appendix}

\section{$\delta$-cover to approximate star discrepancy}
Since it is computationally too expensive to compute the supremum in the definition of the star-discrepancy exactly for dimensions larger than one,  we use a so-called $\delta$-cover to estimate this supremum.

\begin{definition} Let $(G,\mathcal{B}(G),\psi)$ be a probability space where $G\subseteq \mathbb{R}^{s-1}$ and $\mathcal{B}(G)$ is the Borel $\sigma$-algebra defined on $G$.
Let $\mathscr{A} \subseteq\mathcal{B}(G)$ be a set of test sets.
A finite subset $\Gamma_\delta \subseteq\mathscr{A}$ is called a $\delta$-cover
of $\mathscr{A}$ with respect to $\psi$
 if for every $A \in \mathscr{A}$ there are sets $C, D \in \Gamma_\delta$ such that
\begin{equation*}
C \subseteq A \subseteq D
\end{equation*}
and
\begin{equation*}
\psi(D \setminus C) \le \delta.
\end{equation*}
\end{definition}

The concept of  $\delta$-cover is motivated by the following result \cite{Gnewuch2008}. Assume that $\Gamma_\delta$ is a $\delta$-cover of $\mathscr{A}$ with respect to the distribution $\psi$.
For all $\{\boldsymbol{z}_0,\boldsymbol{z}_1,\dots,\boldsymbol{z}_{N-1}\}$, the following discrepancy inequality holds
\begin{equation*}
\sup_{A \in \mathscr{A}} \left|\frac{1}{N} \sum_{n=0}^{N-1} 1_{\boldsymbol{z}_n \in A} - \psi(A) \right|
\le \max_{C \in \Gamma_\delta} \left|\frac{1}{N} \sum_{n=0}^{N-1} 1_{\boldsymbol{z}_n \in C} - \psi(C) \right| + \delta.
\end{equation*}

In the experiments we choose $\mathscr{A}$ to be the set of intervals $[\boldsymbol{0},\boldsymbol{t})$, where $\boldsymbol{t}$ runs through all points in the domain. For densities defined in $[0,1]^{s-1}$, we set  $\Gamma_\delta=\{\prod_{j=1}^{s-1}[0,a_j2^{-m}): a_j\in \mathbb{Z},0 \le a_j\le 2^m\}$,
   which means that the  $\delta$-cover becomes finer as the number of samples increases, thus it can yield a more accurate approximation of the star discrepancy.
For densities defined in $\mathbb{R}^{s-1}$, we choose $\delta$-covers with respect to $m$ as $\Gamma_\delta= \left\{\prod_{j=1}^{s-1}(0,F_j^{-1}(a_j 2^{-m})):a_j\in \mathbb{Z}, 0 \le a_j\le 2^m \right\},$ where $F_j^{-1}$ is the inverse marginal CDF with respect to the proposal density $H$.  Note that the approximation of the star-discrepancy is computationally expensive, thus our experiments only go up to several thousand sample points. However, the generation of samples using a $(t,m,s)$-net is fast.

\section{Proofs}
Before giving the proofs, we need some preparation.

Consider the following  elementary intervals 
\begin{equation}\label{ElementaryInterval}
W_k=\prod_{j=1}^{s}\left[\displaystyle\frac{c_j}{b^k},\displaystyle\frac{c_j+1}{b^k}\right),
\end{equation} \label{elementaryinterval}with $0\leq c_j< b^k$ (where $c_j$ is an integer) for $j=1,\ldots,s$.
The diagonal of $W_k$ has length $\sqrt s/b^k$ and the volume is $b^{-sk}$. Let  $J$ be an arbitrary convex set in $[0,1]^{s}$.
 Let $W_k^o$ denote the union of cubes $W_k$ fully contained in $J$,
\begin{equation}\label{OutW}W_k^o=\bigcup_{W_k\subseteq J} W_k.\end{equation}
Let $\overline{W}_k$ denote the union of cubes $W_k$ having non-empty intersection with $J$ or its boundary $\partial(J)$,
\begin{equation}\label{InW}\overline{W}_k=\bigcup_{W_k\cap (J\cup \partial(J))\neq \emptyset} W_k.\end{equation}

\begin{lemma}\label{lemmaW} Let $k\in \mathbb{N}$. Let  $J$ be an arbitrary convex set in $[0,1]^{s}$. 
 For the $W_k^o$ and $\overline{W}_k$ constructed by \eqref{OutW} and \eqref{InW}, we have
\begin{equation*}
\begin{array}{ll}
    \lambda(\overline{W}_k\setminus J)\leq 2s b^{-k} \mbox{ and }
    \lambda(J\setminus W_k^o )\leq 2s b^{-k}.
\end{array}
\end{equation*}
\end{lemma}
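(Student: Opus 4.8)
The plan is to reduce both inequalities to a single statement about the measure of the \emph{boundary shell} $\overline{W}_k \setminus W_k^o$. Since $W_k^o$ is a union of cubes contained in $J$, we have $W_k^o \subseteq J$, hence $J\setminus W_k^o \subseteq \overline{W}_k \setminus W_k^o$; and since every point of $J$ lies in a grid cube $W_k$, which then meets $J$ and so belongs to $\overline{W}_k$, we have $J \subseteq \overline{W}_k$, hence $\overline{W}_k \setminus J \subseteq \overline{W}_k \setminus W_k^o$. It therefore suffices to prove $\lambda(\overline{W}_k) - \lambda(W_k^o) = \lambda(\overline{W}_k \setminus W_k^o) \le 2sb^{-k}$.

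To bound the shell I would slice in one of the $s$ coordinate directions. Fixing a direction $i$ and the $b^{(s-1)k}$ choices of the remaining coordinate indices partitions $[0,1)^s$ into \emph{columns}, each a stack of $b^k$ cubes over an $(s-1)$-dimensional cell $R$. Convexity of $J$ yields two monotonicity facts per column: the cubes meeting $\overline{J}$ form a consecutive run, and the cubes contained in $J$ form a (possibly empty) consecutive sub-run. Both follow from the elementary observation that, for fixed values of the coordinates other than $i$, the corresponding fibre of a convex set is an interval, whose lower (resp.\ upper) endpoint is a convex (resp.\ concave) function $g_i^-$ (resp.\ $g_i^+$) of the other coordinates. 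Consequently the shell cubes of a column are exactly the two runs lying immediately below and above the interior run, and their number is controlled by the oscillations of $g_i^-$ and $g_i^+$ over the cell $R$, up to one cube of rounding at each end. Summing over all columns: the rounding contributes $O(b^{(s-1)k})$ cubes directly, while $b^k \sum_R \mathrm{osc}_R(g_i^{\pm})$ also contributes $O(b^{(s-1)k})$, because $\sum_R \mathrm{osc}_R(g_i^{\pm})$ telescopes to $O(b^{(s-2)k})$ — here one uses that a convex function is monotone along each coordinate on suitable pieces, and that $g_i^{\pm}$ takes values in $[0,1]$ since $\overline{J}\subseteq[0,1]^s$. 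Multiplying the resulting count $O(b^{(s-1)k})$ of shell cubes by the cube volume $b^{-sk}$ gives $\lambda(\overline{W}_k\setminus W_k^o)=O(b^{-k})$, with the constant of the expected size $2s$ once the accounting is done carefully.

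The main obstacle is precisely this careful accounting. There is no naive ``bounded number of cubes per column'' bound — a convex body can meet arbitrarily many cubes inside a single column, and one slanted example already shows a shell cube can sit strictly inside its $\overline J$-run in \emph{every} coordinate direction — so the count only collapses after summing over all columns, and isolating the constant $2s$ requires a clean split of $\partial\overline{J}$ into a ``lower'' and an ``upper'' boundary in direction $i$ together with a tally of the rounding cubes. A softer alternative avoids the combinatorics: every shell cube lies within $\ell^\infty$-distance $b^{-k}$ of $\overline{J}$, so $\overline{W}_k\setminus W_k^o$ is contained in the Minkowski sum $\overline{J}+[-b^{-k},b^{-k}]^s$, and by monotonicity of mixed volumes $\lambda\big(\overline{J}+[-b^{-k},b^{-k}]^s\big)-\lambda(\overline{J}) \le \lambda\big([0,1]^s+[-b^{-k},b^{-k}]^s\big)-1 = (1+2b^{-k})^s-1$, which agrees with $2sb^{-k}$ to leading order in $b^{-k}$ — the only regime relevant for the discrepancy bounds that invoke this lemma.
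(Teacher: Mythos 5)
Your reduction of both inequalities to the single shell estimate $\lambda(\overline{W}_k\setminus W_k^o)\le 2sb^{-k}$ is sound (up to the null set of points with a coordinate equal to $1$, since the cubes tile only $[0,1)^s$), and the column-slicing plan is the natural route to a sharp constant. The genuine gap is that the proof stops exactly where the lemma's content begins: the ``careful accounting'' that converts the per-column oscillation sums into the constant $2s$ is announced but not carried out, and your fallback via Minkowski sums cannot close it, because monotonicity of mixed volumes yields $\lambda\bigl(\overline{J}+[-b^{-k},b^{-k}]^s\bigr)-\lambda(\overline{J})\le (1+2b^{-k})^s-1=2sb^{-k}+\binom{s}{2}4b^{-2k}+\cdots$, which is strictly larger than $2sb^{-k}$ for $s\ge 2$. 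Note also that your reduction targets a strictly stronger statement than the lemma: the two quantities $\lambda(\overline{W}_k\setminus J)$ and $\lambda(J\setminus W_k^o)$ partition the shell up to a null set, and the example $J=[\varepsilon,1-\varepsilon]^s$ with $0<\varepsilon\ll b^{-k}$ shows that a single one of them can already come arbitrarily close to exhausting $1-(1-2b^{-k})^s\approx 2sb^{-k}$; so the combined shell bound, while apparently still true (for $s=2$ a crossing count over the $2(b^k-1)$ interior grid lines caps the number of boundary squares at $4b^k-3$), has essentially no slack and makes the accounting harder rather than easier.

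For context, the paper does not prove the stated constant either: it encloses $\overline{W}_k\setminus J$ in the Euclidean $\sqrt{s}\,b^{-k}$-neighbourhood of $J$, bounds that neighbourhood by (outer surface area) $\times$ (width) $\le 2s\cdot\sqrt{s}\,b^{-k}$, explicitly labels this ``a slightly weaker result,'' and defers the sharp constant $2s$ to Niederreiter--Wills \cite{NW75}. Your Minkowski-sum alternative is the same soft neighbourhood argument in the $\ell^\infty$ metric and is, if anything, tighter and more rigorous (the paper's ``volume $\le$ surface area times width'' silently drops the higher-order Steiner terms that your mixed-volume inequality keeps). Since every downstream use of the lemma --- Lemma~\ref{IsoDisBound} and the discrepancy theorems --- only requires a bound of the form $c_sb^{-k}$, either weaker estimate suffices for the paper's results up to constants; but neither your write-up nor the paper's contains a complete proof of the inequality with the constant $2s$ as stated.
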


To illustrate the result we provide the following simple argument which yields a slightly weaker result.
Based on the construction of $W_k$, its diagonal length is $\sqrt{s}/b^{k}$. Then
\begin{equation*}
    \overline{W}_k\setminus J\subseteq B:=\{\boldsymbol{x}\in [0,1]^s\setminus J:\parallel \boldsymbol{x}-\boldsymbol{y}\parallel \leq \sqrt sb^{-k}\mbox{ for some }\boldsymbol{y}\in J\},
\end{equation*}
where  $\parallel\cdot\parallel$ is the Euclidean norm.
Then
\begin{equation*}
    \lambda(\overline{W}_k\setminus J)\leq  \lambda(B).
\end{equation*}
Note that the outer surface area of a convex set in $[0,1]^{s}$ is bounded by the surface area of the unit cube $[0,1]^s$, which is $2s$. Thus the Lebesgue measure of the set $B$ is bounded by the outer surface area times the diameter.
 Therefore
 \begin{equation*}
   \lambda(\overline{W}_k\setminus J)\le \lambda(B) \le  2s\sqrt sb^{-k}.
\end{equation*}
The result for $\lambda(J\setminus W_k^o )$ follows by a  similar discussion as the proof above.

\begin{remark}
Note that in \cite{NW75} it was also shown that the constant $2s$ is best possible.
\end{remark}

Now we extend the result in Lemma \ref{lemmaW} to pseudo-convex sets.
\begin{corollary} \label{CorollaryWPseudo} Let  $J$ be an arbitrary pseudo-convex set in $[0,1]^{s}$ with admissible convex covering of $p$ parts with $q$ convex parts of $J$. 
For  $W^o_k$ and $\overline{W}_k$ given by \eqref{OutW} and \eqref{InW}  we have
\begin{equation*}
    \lambda(\overline{W}_k\setminus J)\leq 2psb^{-k} \mbox{ and }
    \lambda(J\setminus W_k^o )\leq 2psb^{-k}.
\end{equation*}
\end{corollary}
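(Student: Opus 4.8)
The plan is to reduce the pseudo-convex estimate to the convex estimate of Lemma~\ref{lemmaW}, applied separately to the pieces of an admissible convex covering $A_1,\dots,A_p$ of $J$ (with $A_1,\dots,A_q\subseteq J$ and $A_j':=A_j\setminus J$ convex for $q+1\le j\le p$, as in Definition~\ref{pseudoConvexSet}). The geometric input I would isolate first is that the boundary of $J$ is carried by the boundaries of these convex pieces. Indeed $J\subseteq\bigcup_{j=1}^p A_j$ gives $\overline J\subseteq\bigcup_{j=1}^p\overline{A_j}$, and a point interior to $A_j$ (for $j\le q$) or interior to $A_j'$ (for $j>q$) cannot lie on $\partial J$: in the first case a whole neighbourhood lies in $J$, in the second a whole neighbourhood avoids $J$ (here one uses that $J$ is open). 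Hence $\partial J\subseteq\bigcup_{j\le q}\partial A_j\cup\bigcup_{j>q}\partial A_j'$, and, equivalently, $[0,1]^s\setminus J=\bigl(\bigcup_{j>q}A_j'\bigr)\cup\bigl([0,1]^s\setminus\bigcup_{j=1}^p A_j\bigr)$.

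For the bound on $\lambda(\overline W_k\setminus J)$ I would split a point $x\in\overline W_k\setminus J$ according to this decomposition. The grid cube $W_k$ containing $x$ meets $\overline J$ but is not contained in $J$, so, being connected, it meets $\partial J$. If $x$ lies outside every $A_j$, then the cube meets some $\overline{A_{j_0}}$ while $x\notin A_{j_0}$, so $x$ belongs to the outer grid shell $\overline W_k(\overline{A_{j_0}})\setminus\overline{A_{j_0}}$ up to a null set, whose measure is $\le 2sb^{-k}$ by Lemma~\ref{lemmaW} applied to the convex body $\overline{A_{j_0}}$. If instead $x\in A_j'$ for some $j>q$, then near $x$ one has $\partial J=\partial A_j'$, so the cube containing $x$ meets $\partial A_j'$ and is therefore not contained in $A_j'$; thus $x$ lies in the inner grid shell $A_j'\setminus W_k^o(A_j')$, again of measure $\le 2sb^{-k}$ by Lemma~\ref{lemmaW}. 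Summing these contributions over the convex bodies of the covering and being careful that each point is charged to a single one-sided shell of a single convex body (and that no inner shell is needed for the parts $A_1,\dots,A_q$ already contained in $J$) yields $\lambda(\overline W_k\setminus J)\le 2psb^{-k}$.

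The companion inequality $\lambda(J\setminus W_k^o)\le 2psb^{-k}$ I would treat dually. Write $J\setminus W_k^o(J)\subseteq\bigcup_{j=1}^p\bigl(A_j\cap(J\setminus W_k^o(J))\bigr)$. For $j\le q$ every cube contained in $A_j$ is contained in $J$, hence $W_k^o(J)\supseteq W_k^o(A_j)$ and $A_j\setminus W_k^o(J)\subseteq A_j\setminus W_k^o(A_j)$, of measure $\le 2sb^{-k}$ by Lemma~\ref{lemmaW}. For $j>q$ the portion of $J$ inside $A_j$ is $A_j\setminus A_j'$, and the cubes it misses near the boundary lie in the outer shells of the two convex bodies $\overline{A_j}$ and $\overline{A_j'}$, each of measure $\le 2sb^{-k}$ by Lemma~\ref{lemmaW}; collecting terms and counting convex bodies as before gives the bound. (This is exactly the kind of accounting that later feeds into Lemma~\ref{lemmaConvex} and Theorem~\ref{Maintheorem}.)

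The step I expect to be delicate is precisely this bookkeeping. One must be careful about the half-open convention for the cubes $W_k=\prod_{j=1}^s\bigl[c_jb^{-k},(c_j+1)b^{-k}\bigr)$, about whether the pieces $A_j$ and $A_j'$ are taken open or closed, and about arranging the charging so that the total number of invocations of Lemma~\ref{lemmaW} matches the coefficient in the statement; this is where the dependence on $p$ (and $q$) is pinned down. No new analytic estimate is required: the only quantitative fact used is the one already packaged inside Lemma~\ref{lemmaW}, namely that a convex subset of $[0,1]^s$ has surface area at most $2s$, so that the grid shell of width $\sqrt s\,b^{-k}$ around it has measure $O(sb^{-k})$.
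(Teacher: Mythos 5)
You take the same route as the paper: decompose $[0,1]^s\setminus J$ via the admissible covering, place each point of $\overline{W}_k\setminus J$ into a one-sided grid shell of one of the convex bodies $A_j$ or $A_j'$, and invoke Lemma~\ref{lemmaW} once per shell. The gap sits exactly where you flag it, and it is not repairable by more careful charging. In your case (a), a point $x\notin\bigcup_jA_j$ whose cube meets $J$ may be adjacent only to $A_{j_0}\cap J$ with $j_0>q$, so the outer shells of \emph{all} $p$ bodies $A_1,\dots,A_p$ are needed; together with the $p-q$ inner shells of the $A_j'$ from case (b) you are measuring a union of up to $p+(p-q)=2p-q$ distinct shells, and Lemma~\ref{lemmaW} then yields $2(2p-q)sb^{-k}$, not $2psb^{-k}$. ``Each point is charged to a single shell'' does not help: the loss is not double-counting of points but the number of shells whose measures are summed. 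Your dual argument for $\lambda(J\setminus W_k^o)$ has the same count ($q$ shells for $j\le q$, and two shells, for $\overline{A_j}$ and $\overline{A_j'}$, for each $j>q$). In fact the constant $p$ is not just out of reach of this method; it appears to be false. Take $s=2$, $b=2$, $A_1=(\delta,1-\delta)^2$ open with $\delta=b^{-k}-\eta$, $A_1'=[2b^{-k}+\eta,1-2b^{-k}-\eta]^2$ closed, and $J=A_1\setminus A_1'$ an open square frame, so that $p=1$, $q=0$. For small $\eta$ the cubes straddling $\partial A_1$ lie almost entirely outside $A_1$ and those straddling $\partial A_1'$ lie almost entirely inside $A_1'$, and these two disjoint regions are both contained in $\overline{W}_k\setminus J$; their total measure approaches $8b^{-k}$, exceeding the claimed $2psb^{-k}=4b^{-k}$ while respecting $2(2p-q)sb^{-k}=8b^{-k}$.

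For what it is worth, the paper's own proof has the same defect: it asserts the inclusion $\overline{W}_k\setminus J\subseteq\bigcup_{j\le q}B_j\cup\bigcup_{j>q}B_j'$ with outer neighbourhoods $B_j$ only for $j\le q$ and with $B_j'\subseteq A_j'$, which misses precisely the points outside $\bigcup_jA_j$ that are adjacent to $A_{j_0}\cap J$ for some $j_0>q$ (in the example above, the strip just outside $\partial A_1$ lies in no $B_j$ and no $B_j'$). The constant that both arguments actually deliver is $2(2p-q)sb^{-k}$, consistent with the factor $2p-q$ appearing in Lemma~\ref{lemmaConvex}; since the corollary is not invoked elsewhere in the paper, nothing downstream breaks, but you should either state the bound with $2p-q$ in place of $p$ or produce a genuinely different argument for the smaller constant.
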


\begin{proof}
Let $A_1,\ldots,A_p$ be an admissible convex covering of $J$ with $p$ parts. Without loss of generality, let $A_1,\ldots,A_q$ be the convex subsets of $J$
and $A_{q+1},\ldots,A_p$ be such that  $A_j'=A_j\backslash J$ is convex for  $q+1\leq j\leq p$.
It can be shown that
\begin{equation}\label{SetA}
    J=\bigcup_{j=1}^q A_j \cup \bigcup_{j=q+1}^p ( A_j \backslash A_j').
\end{equation}

Therefore
\begin{equation*}
\overline{W}_k\setminus J \subseteq \left(\bigcup_{j=1}^{q} B_j \cup \bigcup_{j=q+1}^p B'_j\right),
\end{equation*}
where $$B_j=\left\{\boldsymbol{y}\in [0,1]^s\setminus A_j: \parallel \boldsymbol{x}-\boldsymbol{y}\parallel\le \sqrt s b^{-k} \mbox{ for some } \boldsymbol{x}\in A_j \right\},~j=1,\ldots,q,$$
and
  $$B'_j=\{\boldsymbol{y}\in A'_j: \parallel \boldsymbol{x}-\boldsymbol{y}\parallel\le \sqrt s b^{-k} \mbox{ for some } \boldsymbol{x}\in [0,1]^s\setminus A_j'\},~j=q+1,\ldots,p.$$
Since $B_j\cup A_j$ for $j=1,\ldots, q$, and $B'_j\cup A'_j$ for $j=q+1,\ldots,p$ are convex,
using Lemma \ref{lemmaW}, we obtain
\begin{equation*}
\lambda(\overline{W}_k\setminus J) \leq  \lambda\Big(\bigcup_{j=1}^{q} B_j\Big) + \lambda\Big(\bigcup_{j=q+1}^p B'_j\Big)
\leq 2psb^{-k}.
\end{equation*}

The result for  $\lambda(J\setminus W_k^o )$ follows by a similar discussion.
\end{proof}

%

\subsection{Proof of upper bound}\label{AppendixUpperBound}


Proof of Lemma~\ref{IsoDisBound}.
\begin{proof}
For the point set $Q_{m,s}=\{\boldsymbol{x}_0,\boldsymbol{x}_1,\ldots, \boldsymbol{x}_{M-1}\}\subseteq[0,1]^s$ generated by a $(t,m,s)$-net in base $b$ with $M=b^m$,   let $k=\left\lfloor \displaystyle\frac{m-t}{s}\right\rfloor$. Let  $J$ be an arbitrary convex set in $[0,1]^{s}$.
Consider the elementary interval $W_k$ given by Equation~\eqref{ElementaryInterval}. For  $W^o_k$ and $\overline{W}_k$ given by \eqref{OutW} and \eqref{InW},
%
obviously, $W_k^o\subseteq J\subseteq \overline{W}_k$. The sets $W_k^o$ and $\overline{W}_k$
 are fair with respect to the net, that is
 \begin{align*}
    \displaystyle\frac{1}{M} \sum_{n=0}^{M-1}1_{\overline{W}_k}(\boldsymbol{x}_n)=\lambda(\overline{W}_k)\quad\mbox{and}\quad
           \displaystyle\frac{1}{M}  \sum_{n=0}^{M-1}1_{W_k^o}(\boldsymbol{x}_n)=\lambda(W_k^o).
 \end{align*}
 Then
\begin{align*}
\frac{1}{M} \sum_{n=0}^{M-1} 1_{J}(\boldsymbol{x}_n)-\lambda(J)  \leq & \frac{1}{M} \sum_{n=0}^{M-1} 1_{\overline{W}_k}(\boldsymbol{x}_n)-\lambda(\overline{W}_k)+\lambda(\overline{W}_k \setminus J)\\ = & \lambda(\overline{W}_k \setminus J),
\end{align*}
 and
\begin{align*}
\frac{1}{M}\sum_{n=0}^{M-1}1_{J}(\boldsymbol{x}_n)-\lambda(J) \geq &
\frac{1}{M}\sum_{n=0}^{M-1}1_{W_k^o}(\boldsymbol{x}_n)-\lambda(W^o_k)-\lambda( J\setminus W_k^o) \\
     = & -\lambda( J\setminus W_k^o).
 \end{align*}
  By Lemma \ref{lemmaW}, we have
 \begin{equation*}
   \lambda(\overline{W}_k\setminus J)\leq 2sb^{-k}  \mbox{ and }
    \lambda(J\setminus W^o_k )\leq2sb^{-k}.
\end{equation*}
Thus we obtain
\begin{equation*}
    \abs{\frac{1}{M}\sum_{n=0}^{M-1}1_{J}(\boldsymbol{x}_n)-\lambda(J)}\leq2sb^{-k}\leq 2sb^{t/s}M^{-1/s}.
\end{equation*}
Since the bound holds for arbitrary convex sets, the proof is complete.
\end{proof}

Proof of Theorem~\ref{Maintheorem}.

\begin{proof}
 Let $J_{\boldsymbol{t}}^*=([\boldsymbol{0},\boldsymbol{t})\times [0,1])\bigcap A $, where $\boldsymbol{t}=(t_1,\ldots,t_{s-1})$ and $A=\{\boldsymbol{x}\in[0,1]^s:\psi(x_1,,\ldots,x_{s-1})\ge Lx_s\}$.
 Since $\boldsymbol{y}_n $ are the first $s-1$ coordinates  of $\boldsymbol{z}_n \in A$ for $ n=0,\ldots,N-1$, we have $$\sum_{n=0}^{M-1}1_{J_{\boldsymbol{t}}^*}(\boldsymbol{x}_n)=\sum_{n=0}^{N-1}1_{J_{\boldsymbol{t}}^*}(\boldsymbol{z}_n)
 =\sum_{n=0}^{N-1}1_{[\boldsymbol{0},\boldsymbol{t})}(\boldsymbol{y}_n).$$
 Therefore
 \begin{equation*}
   \left|\displaystyle\frac{1}{N}\sum_{n=0}^{N-1}1_{[\boldsymbol{0},\boldsymbol{t})}({\boldsymbol{y}_n})
    -\displaystyle\frac{1}{C}\int_{[\boldsymbol{0},\boldsymbol{t})}\psi({\boldsymbol{z}})d\boldsymbol{z}\right|=\left|\displaystyle\frac{1}{N}\sum_{n=0}^{M-1}1_{J_{\boldsymbol{t}}^*}({\boldsymbol{x}_n})
    - \frac{1}{\lambda(A)} \lambda(J_{\boldsymbol{t}}^*) \right|.
 \end{equation*}
The right-hand side above is now bounded by
\begin{eqnarray*}
&& \frac{M}{N} \left|\frac{1}{M} \sum_{n=0}^{M-1} 1_{J_{\boldsymbol{t}}^*}(\bsx_n) - \lambda(J_{\boldsymbol{t}}^*) \right| + \left| \lambda(J_{\boldsymbol{t}}^*) \left(\frac{M}{N} - \frac{1}{\lambda(A)} \right)\right| \\ & \le & \frac{M}{N} \left(\left|\frac{1}{M} \sum_{n=0}^{M-1} 1_{J_{\boldsymbol{t}}^*}(\bsx_n) - \lambda(J_{\boldsymbol{t}}^*) \right| + \left| \lambda(A) - \frac{1}{M} \sum_{n=0}^{M-1} 1_{A}(\bsx_n) \right| \right),
\end{eqnarray*}
where we used the estimation $\lambda(J^*_{\boldsymbol{t}}) \le \lambda(A)$ and the fact that $N = \sum_{n=0}^{M-1} 1_{A}(\bsx_n)$. Since $J^*_{\boldsymbol{t}}$ is also pseudo-convex, it follows from Lemma~\ref{lemmaConvex} that we can bound the above expression by
\begin{eqnarray*}
&& 
\frac{M}{N} 2 (2p-q) J_M(P_M^{(s)}).
\end{eqnarray*}

In addition, $\lim_{M\to\infty} \frac{N}{M}=\lambda(A)$,  which means  $\lim_{M\to\infty} \frac{N}{M}=\int_{[0,1]^{s-1}}\psi(\boldsymbol{z})d\boldsymbol{z}/L=C/L $. Hence there is an $M_0$ such that $\frac{N}{M} \geq C/(2L)$ for all $M \ge M_0$. Thus $\frac{M}{N} \le \frac{2L}{C}$ for all $M \ge M_0$. Further we have $N \le M$. Using Lemma~\ref{IsoDisBound} we obtain the bound
\begin{equation*}
 \frac{M}{N} 2 (2p-q) J_M(P_M^{(s)}) \le 8L C^{-1}  s b^{t/s} (2p-q) N^{-1/s}.
\end{equation*}
\end{proof}


\subsection{Proof of lower bound}\label{AppendixLowerbound}
The following lemma  provides information about the packing number of the northern hemisphere
$$\mathbb{S}_{\tiny{\mbox{north}}}^{s-1}:=\{\boldsymbol{x}\in [0,1]^{s}, \parallel \boldsymbol{x}-\boldsymbol{1/2} \parallel=1/2, x_{s}\ge 0\},$$
where $\boldsymbol{1/2}=(\frac{1}{2},\frac{1}{2},\ldots,\frac{1}{2})\in [0,1]^{s}$. The (closed) spherical cap $C(\boldsymbol{y},\rho)\subset \mathbb{S}_{\tiny{\mbox{north}}}^{s-1}$ with center $\boldsymbol{y}\in \mathbb{S}_{\tiny{\mbox{north}}}^{s-1}$ and angular radius $\rho\in [0,\pi]$ is defined by
\begin{equation*}
   C(\boldsymbol{y},\rho)=\{\boldsymbol{y}\in \mathbb{S}_{\tiny{\mbox{north}}}^{s-1}|\boldsymbol{x}\cdot\boldsymbol{y}\ge \frac{\cos \rho}{4}\}.
\end{equation*}
The packing of $\mathbb{S}_{\tiny{\mbox{north}}}^{s-1}$ considered here is constructed by identical spherical caps which are non-overlapping, that is, $C(\boldsymbol{y}_i,\rho)$ and $C(\boldsymbol{y}_j,\rho)$ with $i\neq j$ touch at most at their boundaries.

\begin{lemma}\label{Coveringhemisphere} Let $s\ge 1$. For any $n\in\mathbb{N}$ there exist $M_n$ points $\boldsymbol{y}_1,\ldots,\boldsymbol{y}_{M_n}$ on the northern  hemisphere $\mathbb{S}_{\tiny{\mbox{north}}}^s\subset[0,1]^{s+1}$ and an angular radius $\rho_n$, with
\begin{align*}
    \rho_n=c_1(2n)^{-1/(s-1)},&\\
    n\le M_n\le c_2 n,&
\end{align*}
such that the caps $ C(\boldsymbol{y}_i,\rho_n)$, $i=1,\ldots,M_n$, form a packing of the northern hemisphere. The positive constant $c_1,c_2$ depend only on the dimension $s$.
\end{lemma}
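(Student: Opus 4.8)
This is the classical two-sided bound for sphere packing/covering numbers, transplanted to the hemisphere $\mathbb{S}_{\mathrm{north}}$ sitting inside $[0,1]^s$. Throughout, write $d$ for the dimension of $\mathbb{S}_{\mathrm{north}}$ as a manifold and recall that $\mathbb{S}_{\mathrm{north}}$ is the closed upper half of the round sphere of radius $1/2$ centred at $\boldsymbol{1/2}$; the target scaling is $\rho_n\asymp n^{-1/d}$ (so $d=s-1$ in the convention of the hemisphere defined just above the statement). The plan is: (i) establish a volume estimate $\sigma(C(\cdot,\alpha))\asymp\alpha^{d}$ for geodesic caps; (ii) produce the points as a maximal separated set and read off $M(\rho)\asymp\rho^{-d}$ from the packing/covering dichotomy; (iii) tune $\rho_n$ so that $M(\rho_n)$ lands in the window $[n,c_2 n]$.

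For step (i): with $\sigma$ the surface measure on the sphere of radius $1/2$, a geodesic cap of angular radius $\alpha$ has $\sigma(C(\cdot,\alpha))=\gamma_d\int_0^\alpha(\sin t)^{d-1}\,\mathrm{d}t$ for a constant $\gamma_d>0$ depending only on $d$. Since $\tfrac{2}{\pi}t\le\sin t\le t$ on $[0,\pi/2]$, there are constants $0<a_d\le A_d$ depending only on $d$ with $a_d\alpha^{d}\le\sigma(C(\cdot,\alpha))\le A_d\alpha^{d}$ for $0<\alpha\le\pi/2$. The factor $\tfrac14$ in the paper's coordinate definition $C(\boldsymbol y,\rho)=\{\boldsymbol x\cdot\boldsymbol y\ge\tfrac{\cos\rho}{4}\}$ merely encodes the radius $1/2$ and centre $\boldsymbol{1/2}$; one checks that this set is, up to replacing $\rho$ by a fixed constant multiple, a genuine geodesic cap, so the estimate applies with $\alpha$ proportional to $\rho$.

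For step (ii): fix $\rho\in(0,\pi/4]$ and let $\{\boldsymbol y_1,\dots,\boldsymbol y_{M}\}\subseteq\mathbb{S}_{\mathrm{north}}$ be a \emph{maximal} set whose pairwise geodesic distances on the full sphere are at least $2\rho$; it is finite by compactness. The balls $B(\boldsymbol y_i,\rho)$ have disjoint interiors by the triangle inequality, hence so do the caps $C(\boldsymbol y_i,\rho)\subseteq B(\boldsymbol y_i,\rho)$, giving a packing of the hemisphere; and by maximality every point of $\mathbb{S}_{\mathrm{north}}$ lies in some $B(\boldsymbol y_i,2\rho)$, so the caps $C(\boldsymbol y_i,2\rho)$ cover $\mathbb{S}_{\mathrm{north}}$. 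Comparing measures,
\begin{equation*}
\sigma(\mathbb{S}_{\mathrm{north}})\le\sum_{i=1}^{M}\sigma\bigl(C(\boldsymbol y_i,2\rho)\bigr)\le M\,A_d(2\rho)^{d}
\qquad\text{and}\qquad
M\,a_d\rho^{d}\le\sum_{i=1}^{M}\sigma\bigl(B(\boldsymbol y_i,\rho)\bigr)\le\sigma(\mathbb{S}^{d}),
\end{equation*}
so with $V_d:=\sigma(\mathbb{S}_{\mathrm{north}})>0$ we obtain $\kappa_d\rho^{-d}\le M(\rho)\le\kappa_d'\rho^{-d}$, where $\kappa_d=V_dA_d^{-1}2^{-d}$ and $\kappa_d'=\sigma(\mathbb{S}^d)a_d^{-1}$ depend only on $d$. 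For step (iii), given $n\in\mathbb{N}$ set $\rho_n:=c_1(2n)^{-1/d}$ with $c_1\in(0,1]$ small enough (depending only on $d$) that $\rho_n\le\pi/4$ and $\kappa_d c_1^{-d}\ge1$; then $M_n:=M(\rho_n)$ satisfies $n\le\kappa_d c_1^{-d}(2n)\le M_n\le\kappa_d'c_1^{-d}(2n)=:c_2 n$, which is the assertion, the finitely many $n$ with $c_1(2n)^{-1/d}>\pi/4$ being absorbed by enlarging $c_2$.

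I expect the one genuinely delicate point to be the equator $\{x_s=0\}$: one must ensure that a maximal $2\rho$-separated set really yields a $2\rho$-covering of the \emph{closed} hemisphere up to its boundary circle, and that the cap-volume bounds hold uniformly there. Both follow by passing to the full sphere $\mathbb{S}^{d}$ — a geodesic ball of radius $\le\pi/4$ about any point of $\mathbb{S}_{\mathrm{north}}$ obeys the same two-sided volume estimate, regardless of its position — so the argument is routine but this is where the care goes. A secondary bookkeeping point, already flagged, is matching the paper's coordinate definition of $C(\boldsymbol y,\rho)$ to the intrinsic geodesic cap; this perturbs $\rho$ only by a fixed factor and hence affects only the constants $c_1,c_2$.
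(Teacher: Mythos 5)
Your proof is correct. Note, however, that the paper does not actually prove this lemma: it states that the result ``is essentially well-known for spheres,'' points to Wyner \cite{Wyner1965} and to the summary in \cite[Lemma~1]{Hesse2006}, and asserts without detail that ``a similar argument can be used for the hemisphere.'' What you have written is precisely the classical argument behind those citations --- a maximal $2\rho$-separated set, the packing/covering dichotomy, and two-sided cap-volume bounds $\sigma(C(\cdot,\alpha))\asymp\alpha^{d}$ --- so your route is not different in substance from the one the paper defers to; it simply makes it explicit. Your added value lies exactly in the two points the paper glosses over: (a) the behaviour at the equator, which you handle correctly by running the separation and volume estimates on the full sphere so that the bounds are position-independent; and (b) the translation between the paper's coordinate definition $\boldsymbol{x}\cdot\boldsymbol{y}\ge\frac{\cos\rho}{4}$ (which, as written, even uses $\boldsymbol{y}$ for both the centre and the running variable and omits the recentering at $\boldsymbol{1/2}$) and a genuine geodesic cap, which you rightly observe affects only the constants $c_1,c_2$. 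You also correctly flag and resolve the indexing inconsistency in the statement itself: for the $s$-dimensional hemisphere $\mathbb{S}^{s}_{\mbox{\tiny north}}\subset[0,1]^{s+1}$ the exponent should be $-1/s$, and the stated exponent $-1/(s-1)$ matches the hemisphere $\mathbb{S}^{s-1}_{\mbox{\tiny north}}\subset[0,1]^{s}$ defined immediately before the lemma and used in the proof of the lower bound (where the paper invokes $M\ge c_4\rho_0^{-(s-1)}$); working with the manifold dimension $d$ throughout, as you do, is the right fix. One cosmetic remark: once you choose $c_1\le\pi/4$ the condition $\rho_n\le\pi/4$ holds for every $n\ge1$, so the final clause about absorbing finitely many small $n$ into $c_2$ is unnecessary, though harmless.
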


The lemma is essentially well-known for spheres.  The explicit proof is due to Wyner~\cite{Wyner1965} and Hesse gives a summary in \cite[Lemma~1]{Hesse2006}. A similar argument can be used for the hemisphere in our case.

Now we give the proof of Theorem~\ref{lowerPsiN} whose proof follows the argument from the proof of  \cite[Theorem~1]{Schmidt1975}.

\begin{proof} We may suppose $s\ge2$. Let $\mathbb{S}_{\tiny{\mbox{north}}}^{s-1}$ be the northern hemisphere defined above contained in $[0,1]^s$, and let $S$ be the surface of  $\mathbb{S}_{\tiny{\mbox{north}}}^{s-1}$. Let $C$ be a closed spherical cap on $S$ with spherical radius $\rho$.  The convex hull $\overline{C}$ of $C$ is a solid spherical cap. For $0<\rho<\pi/2$, $\lambda(\overline{C})$ is a continuous function of  $\rho$ with
\begin{equation}\label{lambda_C}
    c_1\rho^{s+1} < \lambda(\overline{C}) < c_2\rho^{s+1}
\end{equation}
If $N$ is sufficiently large, there is a positive real number of $\rho_0$ such that a cap $C$ of spherical radius $\rho_0$ has
\begin{equation*}
    \lambda(\overline{C})=\frac{1}{2N}.
\end{equation*}
In view of \eqref{lambda_C}, $0<\rho_0<c_3N^{-1/(s+1)}$. We now pick as many pairwise disjoint caps with radius $\rho_0$ as possible, say $C_1,\ldots,C_M$. By Lemma~\ref{Coveringhemisphere}, for large  $N$ and hence small $\rho_0$ we have $M\ge c_4 \rho_0^{-(s-1)}$, hence
\begin{equation}\label{value_M}
M\ge c_5N^{(s-1)/(s+1)}.
\end{equation}

Given a sequence of numbers $\sigma_1,\ldots,\sigma_M$, with each $\sigma_i$ either $1$ or $-1$, let $B(\sigma_1,\ldots,\sigma_M)$ consist of all $x\in \mathbb{S}_{\tiny{\mbox{north}}}^{s-1}$ which do not  lie in a cap $\overline{C_i}$ with $\sigma_i=-1$. In other words, $B(\sigma_1,\ldots,\sigma_M)$ is obtained form $\mathbb{S}_{\tiny{\mbox{north}}}^{s-1}$ by removing the solid caps $\overline{C_i}$ for which $\sigma_i=-1$.

Now the local discrepancy function $\Delta_{P_N}(H)$ defined by $\Delta_{P_N}(H)=\sum_{i=1}^N 1_{H}(P_N)-N\lambda(H)$ is additive, i.e. it satisfies
\begin{equation*}
    \Delta_{P_N}(H\cup H')=\Delta_{P_N}(H)+\Delta_{P_N}(H')
\end{equation*}
if $H\cap H'=\emptyset$. It follows easily that
\begin{equation*}
   \Delta_{P_N}(B(\sigma_1,\ldots,\sigma_M))-\Delta_{P_N}(B(-\sigma_1,\ldots,-\sigma_M))=\sum_{i=1}^M\sigma_i \Delta_{P_N}(\overline{C_i}).
\end{equation*}

We have
\begin{equation*}
    \Delta_{P_N}(\overline{C_i})=\sum_{i=1}^N 1_{\overline{C_i}}(P_N)-N\lambda(\overline{C_i})=\sum_{i=1}^N 1_{\overline{C_i}}(P_N)-\frac{1}{2}.
\end{equation*}
Hence for every $i$, either $\Delta_{P_N}(\overline{C_i})\ge \frac{1}{2}$ or $\Delta_{P_N}(\overline{C_i})=-\frac{1}{2}$. Choose $\sigma_i$ such that $\sigma_i \Delta_{P_N}(\overline{C_i})\ge 1/2~(1\le i\le M)$. Then
\begin{equation*}
 \Delta_{P_N}(B(\sigma_1,\ldots,\sigma_M))-\Delta_{P_N}(B(-\sigma_1,\ldots,-\sigma_M))\ge M/2,
\end{equation*}
and either $J=B(\sigma_1,\ldots,\sigma_M)$ or $J=B(-\sigma_1,\ldots,-\sigma_M)$ has $|\Delta_{P_N}(J)|\ge M/4$. In addition, $J$ is a convex set due to its construction.

Thus by \eqref{value_M},
\begin{equation*}
    D_N^*(\lambda,J)\ge \frac{1}{4}\frac{M}{N}\ge c_6 N^{-2/(s+1)}.
\end{equation*}
We take $\psi$ as the boundary  of $J$ excluding the boundary of $[0,1]^s$, which completes the proof.
\end{proof}
\subsection{Proof of Theorem~\ref{triangleDisc}}\label{SumCaseDiscrepancy}
\begin{proof} In what follows we restrict our investigations to the case $k=2$ for simplicity, the general case can be proved by similar arguments. Let $\psi=H_1+H_2$ be the target density function. Assume that we can apply the inverse CDF on $H_1$ and $H_2$ to generate samples. Let  \begin{align*}
    &\mathcal{S}:=\{\boldsymbol{x}\in D: \psi(\boldsymbol{x})< H_1(\boldsymbol{x})\},\nonumber \\
    \mbox{and}&\nonumber \\
    &\mathcal{L}:=\{\boldsymbol{x}\in D: \psi(\boldsymbol{x})\ge H_1(\boldsymbol{x})\}.\nonumber \\
\end{align*}
 The final sample set $R^{(1)}_N$ is a superposition of the three subsets, $R_{1,1},R_{1,2}$ and $R_{2,2}$, see Figure~\ref{FishProblem}. Define $R_{i,\ell}=\{x_0^{(i,\ell)},x_1^{(i,\ell)},\ldots,x_{N_{i,\ell}-1}^{(i,\ell)}\}$ for $i,\ell=1,2$. The number $N_{i,\ell}$ of the points in each subset is given by
 \begin{equation*}
   N_{1,1}=\left\lceil N\frac{\int_{\mathcal{S}}\psi(\boldsymbol{x})d\boldsymbol{x}}{\int_D\psi(\boldsymbol{x})d\boldsymbol{x}}\right\rceil,
   N_{1,2}=\left\lceil N\frac{\int_{\mathcal{L}}H_1(\boldsymbol{x})d\boldsymbol{x}}{\int_D\psi(\boldsymbol{x})d\boldsymbol{x}}\right\rceil \mbox{ and }  \,
   N_{2,2}=\left\lceil N\frac{\int_{\mathcal{L}}H_2(\boldsymbol{x})d\boldsymbol{x}}{\int_D\psi(\boldsymbol{x})d\boldsymbol{x}}\right\rceil.
 \end{equation*}
Then there exists  $\delta_i\in[0,1)$ for $i=1,2,3$ such that  $$N_{1,1}= N\frac{\int_{\mathcal{S}}\psi(\boldsymbol{x})d\boldsymbol{x}}{\int_D\psi(\boldsymbol{x})d\boldsymbol{x}} +\delta_1, N_{1,2}= N\frac{\int_{\mathcal{L}}H_1(\boldsymbol{x})d\boldsymbol{x}}{\int_D\psi(\boldsymbol{x})d\boldsymbol{x}}+ \delta_2 \mbox{ and } N_{2,2}= N\frac{\int_{\mathcal{L}}H_2(\boldsymbol{x})d\boldsymbol{x}}{\int_D\psi(\boldsymbol{x})d\boldsymbol{x}}+\delta_3.$$

 Therefore,
 \begin{align*}
  &\left|\frac{1}{N}\sum_{n=0}^{N-1}1_{(-\infty,\boldsymbol{t}]\cap D}(\boldsymbol{x}_n)-\frac{\int_{(-\infty,\boldsymbol{t}]\cap D}\psi(\boldsymbol{x})d\boldsymbol{x}}
  {\int_D\psi(\boldsymbol{x})d\boldsymbol{x}}\right|\\
 =&\left|\frac{1}{N}\sum_{n=0}^{N-1}1_{(-\infty,\boldsymbol{t}]\cap D}(\boldsymbol{x}_n)
 -\frac{\int_{(-\infty,\boldsymbol{t}]\cap\mathcal{L}}H_1(\boldsymbol{x})d\boldsymbol{x}+\int_{(-\infty,\boldsymbol{t}]\cap\mathcal{L}}H_2(\boldsymbol{x})d\boldsymbol{x}
 +\int_{(-\infty,\boldsymbol{t}]\cap\mathcal{S}}\psi(\boldsymbol{x})d\boldsymbol{x}}
  {\int_D\psi(\boldsymbol{x})d\boldsymbol{x}}\right|\\
 \le &\left|\frac{1}{N}\sum_{n=0}^{N_{1,1}-1}1_{(-\infty,\boldsymbol{t}]\cap D}(\boldsymbol{x}_n^{(1,1)})
 -\frac{\int_{(-\infty,\boldsymbol{t}]\cap\mathcal{L}}H_1(\boldsymbol{x})d\boldsymbol{x}}{\int_D\psi(\boldsymbol{x})d\boldsymbol{x}}\right|\\
 &+\left|\frac{1}{N}\sum_{n=0}^{N_{1,2}-1}1_{(-\infty,\boldsymbol{t}]\cap D}(\boldsymbol{x}_n^{(1,2)})
 -\frac{\int_{(-\infty,\boldsymbol{t}]\cap\mathcal{L}}H_2(\boldsymbol{x})d\boldsymbol{x}}{\int_D\psi(\boldsymbol{x})d\boldsymbol{x}}\right|\\
 &+\left|\frac{1}{N}\sum_{n=0}^{N_{2,2}-1}1_{(-\infty,\boldsymbol{t}]\cap D}(\boldsymbol{x}_n^{(2,2)})
 -\frac{\int_{(-\infty,\boldsymbol{t}]\cap\mathcal{S}}\psi(\boldsymbol{x})d\boldsymbol{x}}{\int_D\psi(\boldsymbol{x})d\boldsymbol{x}}\right|\\
 = &\frac{N_{1,1}}{N}\left|\frac{1}{N_{1,1}}\sum_{n=0}^{N_{1,1}-1}1_{(-\infty,\boldsymbol{t}]\cap D}(\boldsymbol{x}^{(1,1)}_n)
-\frac{N}{N_{1,1}}\frac{\int_{(-\infty,\boldsymbol{t}]\cap\mathcal{S}}\psi(\boldsymbol{x})d\boldsymbol{x}}{\int_D\psi(\boldsymbol{x})d\boldsymbol{x}}\right|\\ &+\frac{N_{1,2}}{N}\left|\frac{1}{N_{1,2}}\sum_{n=0}^{N_{1,2}-1}1_{(-\infty,\boldsymbol{t}]\cap D}(\boldsymbol{x}^{(1,2)}_n)
-\frac{N}{N_{1,2}}\frac{\int_{(-\infty,\boldsymbol{t}]\cap\mathcal{L}}H_1(\boldsymbol{x})d\boldsymbol{x}}{\int_D\psi(\boldsymbol{x})d\boldsymbol{x}}\right|\\
& +\frac{N_{2,2}}{N}\left|\frac{1}{N_{2,2}}\sum_{n=0}^{N_{2,2}-1}1_{(-\infty,\boldsymbol{t}]\cap D}(\boldsymbol{x}^{(2,2)}_n)
-\frac{N}{N_{2,2}}\frac{\int_{(-\infty,\boldsymbol{t}]\cap\mathcal{L}}H_2(\boldsymbol{x})d\boldsymbol{x}}{\int_D\psi(\boldsymbol{x})d\boldsymbol{x}}\right|\\
=&\frac{N_{1,1}}{N}\left[\left|\frac{1}{N_{1,1}}\sum_{n=0}^{N_{1,1}-1}1_{(-\infty,\boldsymbol{t}]\cap D}(\boldsymbol{x}^{(1,1)}_n)
-\frac{\int_{(-\infty,\boldsymbol{t}]\cap\mathcal{S}}\psi(\boldsymbol{x})d\boldsymbol{x}}{\int_{\mathcal{S}}
\psi(\boldsymbol{x})d\boldsymbol{x}}\right|+\frac{\delta_1}{N_{1,1}}\frac{\int_{(-\infty,\boldsymbol{t}]\cap\mathcal{S}}\psi(\boldsymbol{x})d\boldsymbol{x}}{\int_{\mathcal{S}}
\psi(\boldsymbol{x})d\boldsymbol{x}}\right]\\
&+\frac{N_{1,2}}{N}\left[\left|\frac{1}{N_2}\sum_{n=0}^{N_{1,2}-1}1_{(-\infty,\boldsymbol{t}]\cap D}(\boldsymbol{x}^{(1,2)}_n)
-\frac{\int_{(-\infty,\boldsymbol{t}]\cap\mathcal{L}}H_1(\boldsymbol{x})d\boldsymbol{x}}{\int_{\mathcal{L}}H_1(\boldsymbol{x})d\boldsymbol{x}}\right|
+\frac{\delta_2}{N_{1,2}}\frac{\int_{(-\infty,\boldsymbol{t}]\cap\mathcal{L}}H_1(\boldsymbol{x})d\boldsymbol{x}}{\int_{\mathcal{L}}H_1(\boldsymbol{x})d\boldsymbol{x}}\right]\\
&+\frac{N_{2,2}}{N}\left[\left|\frac{1}{N_{2,2}}\sum_{n=0}^{N_{2,2}-1}1_{(-\infty,\boldsymbol{t}]\cap D}(\boldsymbol{x}^{(2,2)}_n)
-\frac{\int_{(-\infty,\boldsymbol{t}]\cap\mathcal{L}}H_2(\boldsymbol{x})d\boldsymbol{x}}{\int_{\mathcal{L}}H_2(\boldsymbol{x})d\boldsymbol{x}}\right|
+\frac{\delta_3}{N_{2,2}}\frac{\int_{(-\infty,\boldsymbol{t}]\cap\mathcal{L}}H_2(\boldsymbol{x})d\boldsymbol{x}}{\int_{\mathcal{L}}H_2(\boldsymbol{x})d\boldsymbol{x}}\right]\\ \le &\frac{N_{1,1}}{N} D^*_{\mathcal{S},\psi}
+ \frac{N_{1,2}}{N} D^*_{\mathcal{L},H_1}
+\frac{N_{2,2}}{N} D^*_{\mathcal{L},H_2}+\frac{1}{N}\frac{\int_{(-\infty,\boldsymbol{t}]\cap D}\psi(\boldsymbol{x})d\boldsymbol{x}}
  {\int_D\psi(\boldsymbol{x})d\boldsymbol{x}}\\
\le &\frac{N_{1,1}}{N} D^*_{\mathcal{S},\psi}
+ \frac{N_{1,2}}{N} D^*_{\mathcal{L},H_1}
+\frac{N_{2,2}}{N} D^*_{\mathcal{L},H_2}+\frac{1}{N},\\
\end{align*}
where $D^*_{\mathcal{S},\psi}$ is the star discrepancy of sample points in $\mathcal{S}$ associated  with $\psi$ and the same notation is also applied to $D^*_{\mathcal{L},H_1}$ and $D^*_{\mathcal{L},H_2}$. Since this result holds for arbitrary $\boldsymbol{t}$, the desired result follows then immediately.
\end{proof}
\end{appendix}
\end{document}